\date{}
\newcommand{\Z}{\mathbb{Z}}
\newcommand{\expec}{\mathbb{E}}
\newcommand{\prob}{\mathbb{P}}
\newcommand{\ve}{\varepsilon}
\newcommand{\wt}{\widetilde}
\newcommand{\E}{\mathbb{E}}
\newcommand{\rmd}{\mathrm{d}}
\newcommand{\qseq}{\mathbf{q}}
\newcommand{\map}{\mathfrak{m}}
\newcommand{\rootface}{f_{\mathrm{r}}}
\newcommand{\maps}{\mathcal{M}}
\newcommand{\faces}{\mathsf{Faces}}
\newcommand{\half}{{\textstyle\frac12}}
\newtheorem{theorem}{Theorem}
\newtheorem{theorema}{Theorem}
\newtheorem{propositiona}[theorema]{Proposition}
\newtheorem{remark}[theorem]{Remark}
\newtheorem{proposition}[theorem]{Proposition}
\newtheorem{lemma}[theorem]{Lemma}
\newtheorem{corollary}[theorem]{Corollary}
\newtheorem*{open*}{Open question}
\numberwithin{theorem}{section}
\def\llbracket{[\hspace{-.10em} [ }
\title{\bf \textsc{Geometry of infinite planar maps with high degrees}}
\author{ \text{Timothy Budd}\thanks{NBI, University of Copenhagen \& IPhT, CEA, Universit\'e Paris--Saclay, E-mail: timothy.budd@cea.fr} \ \ \&  \text{Nicolas Curien}\thanks{Universit\'e Paris--Sud, Universit\'e Paris--Saclay, E-mail: nicolas.curien@gmail.com}}
\begin{document}
	\maketitle

	\abstract{
		We study the geometry of infinite random Boltzmann planar maps with vertices of high degree.  These correspond to the duals of the Boltzmann maps associated to a critical weight sequence $(q_{k})_{ k \geq 0}$ for the faces with polynomial decay $k^{-a}$ with $a \in ( \frac{3}{2}, \frac{5}{2})$ which have been studied by Le Gall \& Miermont as well as by Borot, Bouttier \& Guitter. We show the existence of a phase transition for the geometry of these maps at $a = 2$. In the dilute phase corresponding to $a \in (2, \frac{5}{2})$ we prove that the volume of the ball of radius $r$ (for the graph distance) is of order $r^{ \mathsf{d}}$ with $$  \mathsf{d}= \frac{a-  \half}{a-2},$$ and we provide distributional scaling limits for the volume and perimeter process. In the dense phase corresponding to $ a \in ( \frac{3}{2},2)$ the volume of the ball of radius $r$ is exponential in $r$. We also study the first-passage percolation (\textsc{fpp}) distance with exponential edge weights and show in particular that in the dense phase the \textsc{fpp} distance between the origin and $\infty$ is finite.
		The latter implies in addition that the random lattices in the dense phase are transient. The proofs rely on the recent peeling process introduced in \cite{Bud15} and use ideas of \cite{CLGpeeling} in the dilute phase.}
	\begin{figure}[!h]
		\begin{center}
			\includegraphics[width=8.0cm]{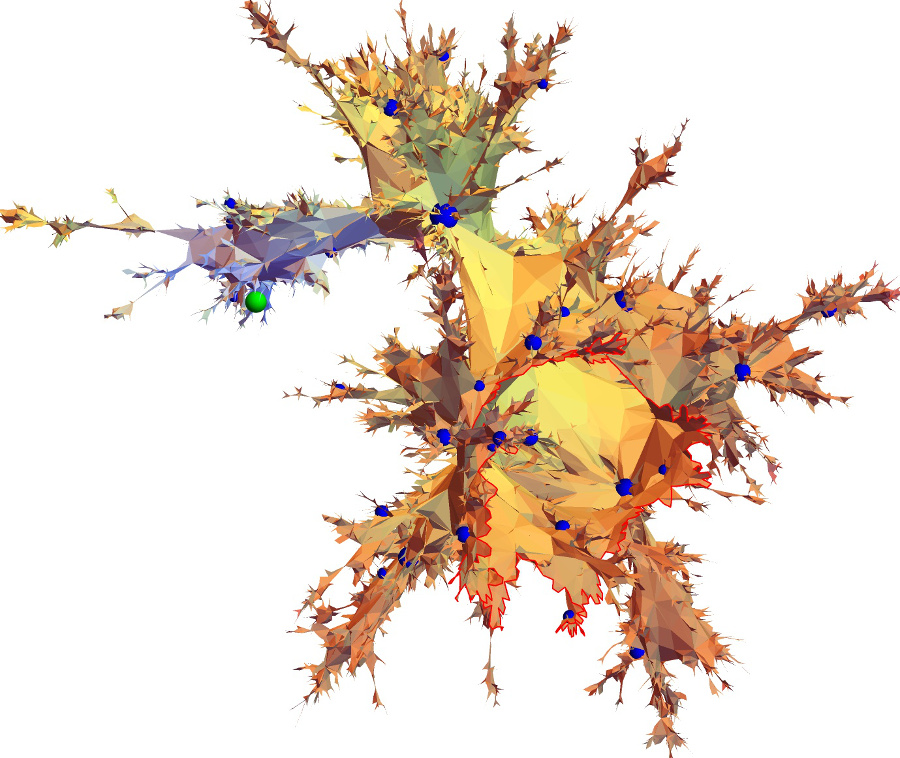}\includegraphics[width=7.8cm]{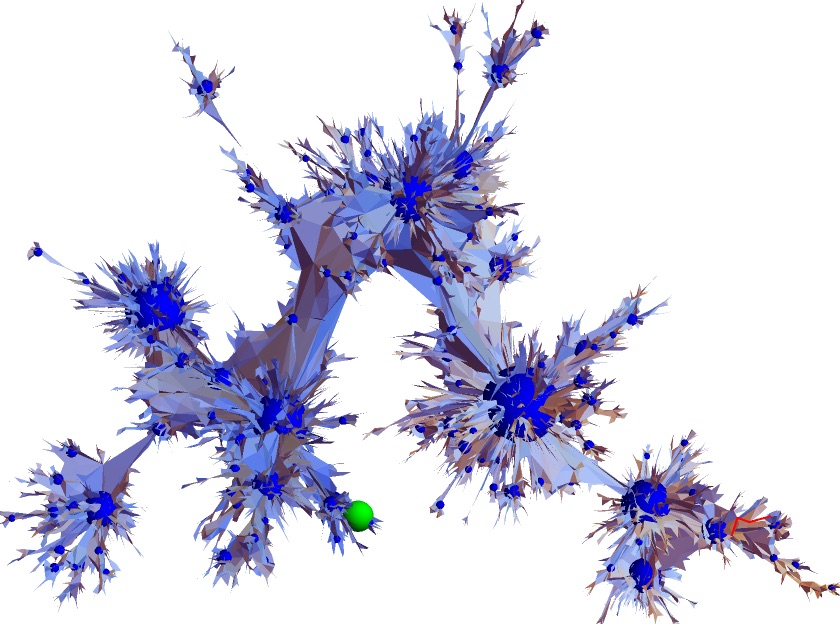}
			\caption{Two representations of the neighborhood of the root in infinite Boltzmann maps with large degree vertices in the dilute case (left) and dense case (right). The root is represented by a green ball, while the high degree vertices are represented by blue balls of size proportional to the degree. The boundary is colored red.}
		\end{center}
		\vspace{-0.6cm}
		
	\end{figure}

\section{Introduction}
Whereas the geometry of random planar maps (\textsc{rpm}) converging towards the Brownian map is by now pretty well understood, the problem remains open for many other models of \textsc{rpm}. Famous examples of these are the \textsc{rpm} coupled with an $O(n)$ model, $n \in (0,2)$, where information about distances remains out of reach. In \cite{LGM09} Le Gall and Miermont studied the geometry of \textsc{rpm} with large faces which correspond to the gaskets of the above planar maps coupled with an $O(n)$ model and in particular introduced their (conjectural) scaling limits. In this work we study the geometry of the dual of these maps which yields new interesting geometric phenomena. \medskip

\paragraph{Infinite Boltzmann planar maps.} Let us first recall the model of planar maps we are dealing with. As usual, all planar maps in this work are rooted, i.e.~equipped with a distinguished oriented edge; also for technical simplicity we will  only consider \emph{bipartite} planar maps (all faces have even degree). We denote by $ \mathcal{M}_{n}$ the set of all finite bipartite planar maps with $n$ vertices. Given a non-zero sequence $ \mathbf{q}= (q_{k})_{ k \geq 1}$ of non-negative numbers we define a measure $w$ on the set of all bipartite planar maps by the formula
\begin{equation} \label{eq:defface}
w(\map) := \prod_{f\in\faces(\map)} q_{\deg(f)/2},
\end{equation}
for every $ \map \in \cup_{n \geq 0} \mathcal{M}_{n}$. We shall assume that $w$ is admissible, meaning that $w$ is a finite measure on $\cup_{n \geq1} \mathcal{M}_{n}$. We shall also suppose that $ \mathbf{q}$ is critical in the sense of \cite[Equation (3)]{MM07}  (see \cite{Bud15}, recalled in Proposition \ref{prop:qnubijection} below, for an equivalent definition). For $n \geq 0$, provided  that $w( \mathcal{M}_{n}) \ne 0$, we define a random planar map $B_{n}$ called the $ \mathbf{q}$-Boltzmann random map with $n$ vertices whose law is $w( \cdot \mid  \cdot \in \mathcal{M}_{n})$.  Under these conditions we have the following convergence in distribution for the local topology along the integers $n$ for which $w( \mathcal{M}_{n}) \ne 0$
\[ B_{n} \xrightarrow[n\to\infty]{(d)} B_{\infty},\] where $B_{\infty}$ is an infinite random rooted bipartite planar map with only one end, which is called the infinite $ \mathbf{q}$-Boltzmann planar map \cite{BS13,St14}. As in \cite[Section 2.2]{LGM09} or in \cite{BBG11}, we focus henceforth on the case when the critical and admissible weight sequence $ \mathbf{q}$ is non-generic, in particular satisfies for some $c, \kappa>0$
 \begin{eqnarray} \label{eq:asymptoticqk} q_{k} \sim c\, \kappa^{k-1} \, k^{-a} \quad \mbox{ as } k \to \infty, \qquad \mbox{ for }  a\in \left( \frac{3}{2}, \frac{5}{2}\right).  \end{eqnarray} The reader should keep in mind that the values of $c, \kappa$ and $( q_{k})_{k \geq 1}$ need to be fine-tuned in order to have the desired criticality property, see the above references and Section \ref{sec:enume} for details (alternatively the material reader may also use the concrete sequences given in Section \ref{sec:particular}). For this choice of $ \mathbf{q}$ the random Boltzmann maps $B_{n}$ possess ``large faces'' and their scaling limits (at least along subsequences) are given by the stable maps of Le Gall and Miermont \cite{LGM09} (this is a family of random compact metric spaces that  look like randomized versions of the Sierpinski carpet or gasket).  Our main object of study here\footnote{We have decided to introduce our main character as the dual map of $B_{\infty}$ rather than starting with a Boltzmann measure similar to \eqref{eq:defface} but with weights on the vertices. We hope that this will help the reader navigate through the needed references \cite{LGM09,BBG11,Bud15} which deal with weights on the faces.} is the dual map $ B_{\infty}^\dagger$ of $B_{\infty}$ whose vertices are the faces of $B_{ \infty}$ and edges are dual to those of $B_{\infty}$. The origin (or root vertex) of $ B_{ \infty}^\dagger$ is the root face $ \rootface$ of $B_{\infty}$ lying on the right of its root edge, while the root edge of $B_{ \infty}^\dagger$ is taken to be the unique edge starting at the origin and intersecting the root edge of $B_{\infty}$. The large faces of $ B_{\infty}$ turn into large degree vertices in $B_{\infty}^\dagger$ and our goal is to understand the effect of this change on the large scale metric structure. For $r \geq 0$, we denote by $ \mathrm{Ball}_{r}^\dagger( B_{\infty})$ the submap of $ B_{\infty}$ obtained by keeping the faces which are at dual distance at most $r$ from the root face of $B_{\infty}$ and consider its hull 
 \[  \overline{\mathrm{Ball}}^\dagger_{r}( B_{\infty})\]
 made by adding to $ \mathrm{Ball}^\dagger_{r}( B_{\infty})$ all the finite connected components of its complement in $ B_{\infty}$ (recall that $ B_{\infty}$ is one-ended). Our main results describe the evolution of the volume and (a version of) the perimeter  of $\overline{\mathrm{Ball}}^\dagger_{r}( B_{\infty})$ as $r$ varies. 
 \paragraph{Results.} When $a \in (2; \frac{5}{2})$ --the so-called \emph{dilute phase}-- we show (Theorem \ref{thm:scalinglayers}) that the volume of the ball of radius $r$ in $ B_{\infty}^\dagger$ (e.g. measure in terms of the number of faces, i.e.~vertices of $ B_{\infty}$) is polynomial in $r$
 \begin{eqnarray} \label{eq:resultdilute} \mathsf{Volume}\big( \overline{\mathrm{Ball}}_{r}^\dagger( B_{\infty})\big) \approx r^{  \mathsf{dim}_{a}} \qquad \mbox{ where }\quad  \mathsf{dim}_{a} = \frac{a-\half}{a-2} \in (4,\infty).  \end{eqnarray} 
 The exponent $ \mathsf{dim}_{a}$ is called the volume growth exponent or sometimes in physics literature the ``Hausdorff dimension'' of $ B_{\infty}$ since it should correspond to the true Hausdorff dimension of a scaling limit of $ B_{\infty}$ (see below). We also show that $\mathsf{Perimeter}\big( \overline{\mathrm{Ball}}_{r}^\dagger( B_{\infty})\big) \approx r^{1/(a-2)}$ and in fact we obtain the limit in distribution of the rescaled volume and perimeter processes in the same spirit as the results of \cite{CLGpeeling}, see Theorem \ref{thm:scalinglayers}. The value of $\mathsf{dim}_{a}$ should be contrasted to the case of Infinite Boltzmann maps with faces of bounded degree, where the volume growth exponent equals 4, a value which is only approached when $a\to 5/2$ (see also our discussion below).
 
 The above exponents explode when $a\downarrow 2$ indicating a phase transition at this value. This is indeed the case and we prove (Theorem \ref{thm:dense}) that when $a \in ( \frac{3}{2}; 2)$ --the so-called \emph{dense phase}-- the volume and the perimeter of the ball of radius $r$ in $B^\dagger_{\infty}$ grow exponentially with $r$ 
\begin{eqnarray}
\mathsf{Perimeter}\big( \overline{\mathrm{Ball}}_{r}^\dagger( B_{\infty})\big) \approx e^{r \mathsf{c}_{a}}\quad \mbox{ and } \quad 
\mathsf{Volume}\big( \overline{\mathrm{Ball}}_{r}^\dagger( B_{\infty})\big) \approx e^{r (a- \half)\mathsf{c}_{a}}
\label{eq:resultdense}
\end{eqnarray}
for some constant $ \mathsf{c}_{a}>0$  which is expressed in terms of a certain Lévy process of stability index $a-1~\in~( \frac{1}{2};1)$. In the above results the perimeter is computed in terms of number of edges and not in terms of number of vertices (see Section \ref{sec:algodual} for the precise definition). Although this distinction is irrelevant in \eqref{eq:resultdilute}, we show that it is crucial in the dense phase since we prove that $B_{\infty}^\dagger$ has infinitely many cut vertices separating the origin from infinity. Our results show that the geometry of $B_{\infty}^\dagger$ is much different from the geometry of $ B_{\infty}$ (for their respective graph distances). Indeed, extrapolating the work of Le Gall and Miermont \cite{LGM09} one should get that for $a\in(\frac{3}{2};\frac{5}{2})$ the volume of (hulls of) balls in $ B_{\infty}$ should scale as 
\[ \mathsf{Volume}\left( \overline{ \mathrm{Ball}}_{r}( B_{ \infty})\right) \approx r^{2a-1}.\]
Comparing the last display to \eqref{eq:resultdilute} and \eqref{eq:resultdense} we see that the distances in the dual map $ B_{\infty}^\dagger$ are deeply modified. This might be unsurprising since when passing to the dual, the large degree faces become large degree vertices which act as ``hubs'' and shorten a lot the distances. This contrasts with the case of ``generic'' random maps (e.g.~uniform triangulations or quadrangulations) where the primal and dual graph distances are believed to be the same at large scales up to a constant multiplicative factor. This has recently been verified in the case of triangulations \cite{AB14,CLGmodif}.

 We also show similar results when we consider a first-passage percolation (\textsc{fpp}) distance on $B^\dagger_{ \infty}$ instead of the graph distance. Specifically, the edges of $ B_{\infty}^\dagger$ are equipped with independent exponential weights of parameter $1$. These weights are interpreted as random lenghts for the edges and give rise to the associate \textsc{fpp}-distance $ \mathrm{d_{fpp}}$ (this precise model of \textsc{fpp} is the Eden model on $ B_{\infty}$). The result \eqref{eq:resultdilute} still holds in the dilute phase for this distance, with identical scaling limit up to a constant multiplicative factor (see Proposition \ref{prop:scalingeden}). In the dense phase a striking phenomenon occurs: the minimal \textsc{fpp}-length of an infinite path started at the origin $\rootface$ of $B_{\infty}^\dagger$ is finite and moreover its expectation is obtained as the expected number of visits to $0$ of a certain one-dimensional transient random walk (see  Proposition \ref{prop:finiteexpect}). As a corollary we obtain that when $a <2$ the simple random walk on $B_{\infty}^\dagger$ is almost surely transient (Corollary \ref{cor:transient}).

The reader may naturally wonder about the status of the above results in the critical case $a=2$: this will be the content of a companion paper.
\paragraph{Discussion.} In order to discuss our results and explain the terminology of dense and dilute phases, let us  briefly recall some results for the $O(n)$ model on random quadrangulations proved in \cite{BBG11}. A loop-decorated quadrangulation $ (\mathfrak{q},  \mathbf{l})$ is a planar map whose faces are all quadrangles on which non-crossing loops $ \mathbf{l}=(l_{i})_{i \geq 1}$ are drawn (see Fig. 2 in \cite{BBG11}). For simplicity we consider the so-called rigid model when loops can only cross quadrangles through opposite sides.  We define a measure on such configurations by putting 
\[ W_{h,g,n}( ( \mathfrak{q}, \mathbf{l})) = g^{| \mathfrak{q}|}h^{| \mathbf{l}|} n^{\# \mathbf{l}},\]
for $g,h>0$ and $n \in (0,2)$ where $|q|$ is the number of faces of the quadrangulation, $| \mathbf{l}|$ is the total length of the loops and $\# \mathbf{l}$ is the number of loops. Provided that the measure $W_{h,g,n}$ has finite total mass one can use it to define random loop-decorated quadrangulations with a fixed number of vertices. Fix $n \in (0,2)$. For most of the parameters $(g,h)$ these random planar maps are sub-critical (believed to be tree like when large) or generic critical (believed to converge to the Brownian map). However, there exists a critical line with an end point in the $(g,h)$-plane (whose location depends on $n$) at which these planar maps may have different behaviours. More precisely, their gaskets, obtained by pruning off the interiors of the outer-most loops (see Fig. 4 in \cite{BBG11}) are precisely non-generic critical Boltzmann planar maps in the sense of \eqref{eq:asymptoticqk} where 
\[ a = 2 \pm \frac{1}{\pi} \arccos(n/2).\]
The case $a = 2 - \frac{1}{\pi} \arccos(n/2) \in ( \frac{3}{2};2)$ (which occurs when away from the end point) is called the dense phase because the loops on the gasket are believed in the scaling limit to touch themselves and each other.\footnote{The dense phase of the $O(n)$ loop model resembles a critical Fortuin--Kasteleyn (FK) cluster model with parameter $q = n^2$. It is thus conceivable that a suitable notion of the gasket of an $q$-FK model with $q\in(0,4)$ on a random planar map gives rise to a Boltzmann planar map with parameter $a=2 - \frac{1}{\pi} \arccos(\sqrt{q}/2)$. No such correspondence is expected in the dilute phase. } The case $a = 2 + \frac{1}{\pi} \arccos(n/2) \in (2; \frac{5}{2})$ (which occurs exactly at the end point) is called the dilute phase because the loops on the gasket are believed to be simple in the scaling limit and avoiding each other. This heuristic sheds some light on our results: in the dense and dilute phases the appearance of large degree vertices, when passing to the dual of $B_{\infty}$, shortens the distance significantly; this effect obeys a phase transition at $a=2$, because in the dense phase the connections between the large degree vertices are so numerous that the volume growth becomes exponential instead of polynomial.

\begin{figure}[!h]
 \begin{center}
 \includegraphics[width=10cm]{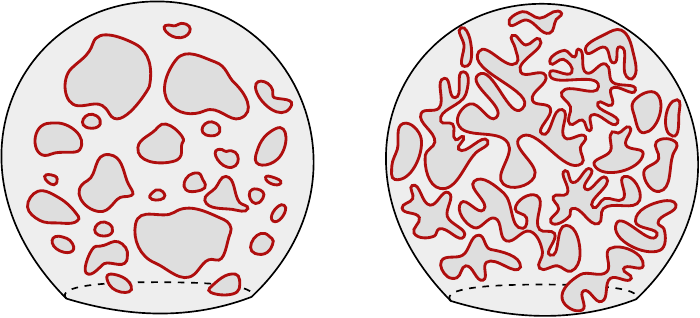}
 \caption{A schematic illustration of $ \mathbf{q}$-Boltzmann \textsc{RPM} in the dilute (left) and dense phase (right). }
 \end{center}
 \end{figure}

\paragraph{Techniques.} Our approach is to explore the map $ B_{\infty}^\dagger$ using the ``lazy'' peeling process recently introduced in \cite{Bud15}. The peeling process was first studied in physics by Watabiki \cite{Wat95} and was the basis for the first derivation of the so-called two-point function \cite{AW95,ADJ97}. It is a stochastic growth process which uses the spatial Markov property of the underlying lattice in order to discover it step by step. A rigorous version of the peeling
process and its Markovian properties was given by Angel \cite{Ang03} in the case of the Uniform Infinite Planar Triangulation (UIPT) and has been one of the key tools to study random triangulations and quadrangulations since then \cite{AB14,Ang03,ACpercopeel,BCKscalingpeeling,CLGpeeling,MN13,BCsubdiffusive,CurKPZ,AR13,CurPSHIT}. The peeling process used in the last references consists roughly speaking in discovering one face at a time. It is well designed to study planar maps with a degree constraint on the faces (such as triangulations or quadrangulations). The peeling process we consider here and which was recently introduced in \cite{Bud15} is different: it discovers one \emph{edge} at a time. The advantage of this ``edge-peeling'' process over the ``face-peeling'' process is that it can be treated in a unified fashion for all models of Boltzmann planar maps. The results we obtain in the dilute case roughly follow from adapting and sharpening the techniques and proofs of \cite{CLGpeeling}. The dense case on the contrary requires a totally new treatment.

\paragraph{Towards a stable sphere.} In a forthcoming work \cite{BBCK16} the authors together with Jean Bertoin and Igor Kortchemski will explore the links between the random maps considered in this work and growth-fragmentation processes. This extends the work \cite{BCKscalingpeeling} where a certain scaling limit of random triangulations was described in terms of a growth-fragmentation process related to the spectrally negative $3/2$-stable process. The new growth-fragmentations involved  may have positive jumps and are related to $\alpha$-stable Lévy processes where $\alpha=a-1$ and with positivity parameter $\rho$ satisfying 
\[ \alpha (1- \rho) = \frac{1}{2}.\]
In the dilute phase $a \in (2; 5/2)$, we conjecture that the random metric spaces $ n^{-1/ \mathsf{dim}_{a}} \cdot B_{n}$ admit a scaling limit  (which we call stable spheres by lack of imagination) which can be constructed from the above growth-fragmentations processes. We expect that these random metric spaces are homeomorphic to the sphere and have Hausdorff dimension $ \mathsf{dim}_{a} = \frac{a-1/2}{a-2}$. A key difference with the Brownian map (corresponding to the case $a= \frac{5}{2}$) is the presence of certain points, ``hubs'', in the metric spaces where a lot of geodesics merge (these correspond to the high degree vertices in the discrete setting). These questions will be addressed in our forthcoming works.  \medskip

We end the discussion with a question that is left open\footnote{Notice that the powerful result of \cite{GGN12} does not apply because the root vertex distribution in $B_{\infty}^\dagger$ has a polynomial tail (and indeed in the dense case those lattices are transient by Corollary \ref{cor:transient}).} by our work:
\begin{open*} Are the random lattices $B_{\infty}^\dagger$ transient or recurrent in the dilute case $a \in (2, 5/2)$?
\end{open*}

\textbf{Acknowledgments: } We are grateful to Igor Kortchemski for comments on a preliminary version of this work and for providing us with reference \cite{ACD05}. We also thank an anonymous referee for many useful suggestions and corrections. We thank the project ``\'Emergence: Combinatoire \`a Paris'' funded by the city of Paris for supporting the visit of the first author during fall 2015. The first author is supported by the ERC-Advance grant 291092, ``Exploring the Quantum Universe'' (EQU), while the second author is supported by ANR-14-CE25-0014 (ANR GRAAL) and ANR-15-CE40-0013 (ANR Liouville).

\clearpage
\tableofcontents

\begin{center} \hrulefill  \quad \textit{From now on we fix once and for all the admissible} \hrulefill\\
	\hrulefill \quad\textit{critical non-generic weight sequence $\qseq$ as in \eqref{eq:asymptoticqk}. } \hrulefill  \end{center}

\section{Boltzmann planar maps and the lazy peeling process}
In this section we recall the edge-peeling process (also called the lazy peeling process) of \cite{Bud15}. We decided to rather mimic the presentation of  \cite{BCKscalingpeeling} in order for the reader to easily compare the differences between the present ``edge-peeling'' process and the ``face-peeling'' process used in \cite{BCKscalingpeeling,CLGpeeling}. We then study in more details two particular peeling algorithms that are designed to explore respectively the dual graph distance and the Eden distance on $B_{\infty}$.

\subsection{Enumeration}

\label{sec:enume}

If $ \map$ is a (rooted bipartite) planar map we denote by $\rootface \in \mathsf{Faces}( \map)$ the face adjacent on the right to the root edge. This face is called the root face of the map and its degree, denoted by $ \mathrm{deg}( \rootface)$, is called the perimeter of $\map$ (by parity constraint the perimeter of a bipartite map must be even). We write $| \map|$ for the number of vertices of $ \map$. For $\ell \geq 0$ and $n \geq 0$ we denote by $ \mathcal{M}^{(\ell)}_{n}$ the set of all (rooted bipartite) planar maps  of perimeter $2 \ell$ and with $n$ vertices, with the convention that $ \mathcal{M}^{(0)}_{1}$ comprises a single degenerate ``vertex planar map'' with no edges and a unique vertex. We put $ \mathcal{M}^{(\ell)}= \cup_{n \geq 1} \mathcal{M}^{(\ell)}_{n}$. Any planar map with at least one edge can be seen as a planar map with root face of degree $2$ by simply doubling the root edge and creating a root face of degree $2$. We shall implicitly use this identification many times in this paper.  We set 
 \begin{eqnarray} W^{(\ell)}_{n} = \sum_{ \map \in \mathcal{M}^{(\ell)}_{n}} \prod_{f \in \mathsf{Faces}( \map) \backslash \{ \rootface\}} q_{ \mathrm{deg}(f)/2} \quad \mbox{ and } \quad W^{(\ell)} = \sum_{n \geq 1} W^{(\ell)}_{n},   \label{eq:defWl}\end{eqnarray} where the dependence in $ \mathbf{q}$ is implicit as always in this paper. By convention $W_1^{(0)} = 1$ and $W_n^{(0)}=0$ for $n\geq 2$. The number $W^{(\ell)}$ can be understood as the partition function arising in the following probability measure: a $ \mathbf{q}$-Boltzmann planar map with perimeter $2\ell$ is a random planar map sampled according to the measure $ w( \cdot \mid \cdot \in \mathcal{M}^{(\ell)})$.
We now recall a few important enumeration results, see \cite[Eq. 3.15, Eq. 3.16]{BBG11}, \cite[Section 2]{LGM09} and \cite{Bud15}. Assuming that the weight sequence $q_{k} \sim c \ \kappa^{k-1} \ k^{-a}$ for $a \in (3/2;5/2)$ is fine-tuned (see  \cite[Section 2.2]{LGM09}) such that it is critical and admissible and satisfies the equation  \[\sum_{k=1}^\infty \binom{2k-1}{k-1} (4\kappa)^{1-k} q_{k} = 1-4\kappa,\]
then we have
 \begin{eqnarray} \label{eq:asymptowl} W^{(\ell)} \sim  \frac{c}{ 2\cos ( a \,\pi)} \kappa^{-\ell-1}\ell^{-a} \quad \mbox{ as }\ell \to \infty. \end{eqnarray}
Furthermore, from \cite[Corollary 2]{Bud15} we deduce that: 
\[ \frac{\kappa^\ell W_{n}^{(\ell)}}{\kappa W_{n}^{(1)}} \xrightarrow[n\to\infty]{}  2 \ell\  2^{-2\ell}\binom{2\ell}{\ell}.\]
The function $h^{\uparrow}(\ell) := 2 \ell\  2^{-2\ell}\binom{2\ell}{\ell}$, which does not depend on the weight sequence $ \mathbf{q}$, will play an important role in what follows in relation with a random walk whose step distribution we define now. Let $\nu$ be the probability measure on $ \mathbb{Z}$ defined by 
 \begin{eqnarray} \label{eq:defnu} \nu(k) = \left\{ \begin{array}{ll} q_{k+1} \kappa^{-k} & \mbox{for } k \geq 0 \\
2W^{(-1-k)} \kappa^{-k} & \mbox{for } k \leq -1 \end{array} \right..  \end{eqnarray}
Under our assumptions $\nu$ is indeed a probability distribution which has power-law tails. The function $ h^{\uparrow}$ is (up to a multiplicative constant) the only non-zero harmonic function on $\{1,2,3,...\}$ for the random walk with independent increments distributed according to $\nu$ (we say that $h^\uparrow$ is $\nu$-harmonic at these points) and that vanishes on $\{...,-2,-1,0\}$. This fact has been used in \cite{Bud15} to give an alternative definition of critical weight sequences:
\begin{propositiona}[\cite{Bud15}]\label{prop:qnubijection}
A weight sequence $\qseq$ is admissible and critical iff there exists a law $\nu$ on $ \mathbb{Z}$ such that $q_k = (\nu(-1)/2)^{k-1} \nu(k-1)$ and $h^\uparrow$ is $\nu$-harmonic on $\Z_{>0}$. In particular the random walk  with increments distributed according to $\nu$ oscillates (its $\limsup$ and $\liminf$ respectively are $+\infty$ and $-\infty$).
\end{propositiona}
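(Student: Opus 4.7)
The plan is to identify $\nu$ with the law of the change in half-perimeter produced by one step of the lazy peeling of \cite{Bud15} applied to the boundary of a $\qseq$-Boltzmann map. Peeling the root edge of a map with perimeter $2\ell$ either reveals a new face of degree $2k$, with Boltzmann weight $q_{k}$ and new perimeter $2(\ell+k-1)$, or identifies the peeled edge with some other boundary edge, thereby splitting off a finite sub-map of half-perimeter $j\in\{0,\ldots,\ell-1\}$ on one side or the other. Classifying maps according to the outcome of this first peel gives the Tutte-type relation
\[
W^{(\ell)} \;=\; \sum_{k\geq 1} q_{k}\,W^{(\ell+k-1)} \;+\; 2\sum_{j=0}^{\ell-1} W^{(j)}\,W^{(\ell-1-j)},\qquad \ell\geq 1,
\]
which will be the engine of the whole argument.

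For the forward direction I would combine admissibility with the asymptotic \eqref{eq:asymptowl} to show that $W^{(\ell+k-1)}/W^{(\ell)}\to\kappa^{-(k-1)}$ and that the convolution term in the Tutte identity divided by $W^{(\ell)}$ converges to $2\sum_{j\geq 0}W^{(j)}\kappa^{j+1}$; together with \eqref{eq:defnu} this turns the Tutte identity as $\ell\to\infty$ into $\sum_{k\in\Z}\nu(k)=1$, simultaneously identifying $\nu$ as a probability measure and showing that the critical equation $\sum_{k\geq 1}\binom{2k-1}{k-1}(4\kappa)^{1-k}q_{k}=1-4\kappa$ of \cite{MM07} is equivalent to this total-mass condition. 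To establish the $\nu$-harmonicity of $h^{\uparrow}$ on $\Z_{>0}$, I would multiply the Tutte identity by $\kappa^{\ell}/(\kappa W^{(1)})$ and use the convergence $\kappa^{\ell}W^{(\ell)}_{n}/(\kappa W^{(1)}_{n})\to h^{\uparrow}(\ell)$ recalled in the excerpt; after reindexation, the $q_{k}$-sum becomes $\sum_{k\geq 1}\nu(k-1)h^{\uparrow}(\ell+k-1)$ and the convolution becomes $\sum_{j\geq 0}\nu(-1-j)h^{\uparrow}(\ell-1-j)$, whose range truncates automatically at $j\leq\ell-1$ since $h^{\uparrow}$ vanishes on $\Z_{\leq 0}$. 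Adding the two pieces yields exactly $\sum_{k\in\Z}\nu(k)h^{\uparrow}(\ell+k)=h^{\uparrow}(\ell)$.

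For the converse direction, given $\nu$ with $h^{\uparrow}$ $\nu$-harmonic on $\Z_{>0}$, I would set $\kappa:=\nu(-1)/2$, $q_{k}:=\kappa^{k-1}\nu(k-1)$ and define candidate partition functions $W^{(\ell)}:=\kappa^{-\ell-1}\nu(-1-\ell)/2$ for $\ell\geq 0$; running the computation of the previous paragraph in reverse shows that these $W^{(\ell)}$ satisfy the Tutte identity, which by induction on the number of vertices forces them to agree with the actual Boltzmann partition functions of the $q_{k}$. Finiteness of the $W^{(\ell)}$ yields admissibility and the $\nu$-harmonicity delivers criticality. The oscillation of the $\nu$-walk is then a consequence of classical fluctuation theory applied to the polynomial growth $h^{\uparrow}(\ell)\sim 2\sqrt{\ell/\pi}$: such a growth corresponds to a positivity parameter $\rho=1/2$ and excludes both drift regimes. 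The hard part will be the second paragraph: matching the combinatorial Tutte equation with the probabilistic $\nu$-harmonicity of $h^{\uparrow}$ hinges on the precise definition of $\nu$ on the negative integers given by \eqref{eq:defnu}, and more deeply on the $\qseq$-independence of the function $h^{\uparrow}(\ell)=2\ell\binom{2\ell}{\ell}4^{-\ell}$, which is what makes the edge-peeling bijection between admissible critical $\qseq$ and laws $\nu$ work uniformly across all models.
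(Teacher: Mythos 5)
The paper offers no proof of Proposition \ref{prop:qnubijection}: it is imported verbatim from \cite{Bud15}, so your sketch has to be judged against what a complete argument must contain. Your skeleton --- the one-step loop equation for the disk functions, the identification $\nu(-1-\ell)=2W^{(\ell)}\kappa^{\ell+1}$, and the passage to $h^{\uparrow}$ via the ratio limit $\kappa^{\ell}W^{(\ell)}_{n}/(\kappa W^{(1)}_{n})\to h^{\uparrow}(\ell)$ --- is the right one. But there are concrete errors. The loop equation is $W^{(\ell)}=\sum_{k\geq1}q_{k}W^{(\ell+k-1)}+\sum_{j=0}^{\ell-1}W^{(j)}W^{(\ell-1-j)}$, \emph{without} your factor $2$: the index $j$ already enumerates both the ``left'' and ``right'' identifications, since the splittings $(j,\ell-1-j)$ and $(\ell-1-j,j)$ are distinct terms of the same sum. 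With your factor of $2$ the $\ell\to\infty$ limit gives $\sum_{m\geq0}\nu(m)+2\sum_{m\leq-1}\nu(m)=1$, not $\sum_{m}\nu(m)=1$; you only land on the right answer because you then assign a single copy of $2\sum_{j}W^{(j)}\kappa^{j+1}$ to the convolution, whereas in the correct equation \emph{both} ends of the symmetric convolution contribute one copy each of $\tfrac12\sum_{m\leq-1}\nu(m)$. The same bookkeeping recurs in the harmonicity step, which in addition requires the fixed-$n$ loop equation (with the convolution over vertex counts) before the ratio limit can be inserted, a domination argument to exchange $n\to\infty$ with the sum over $k$, and some care about circularity: the ratio limit and the asymptotics \eqref{eq:asymptowl} are, in \cite{Bud15}, consequences of criticality (and \eqref{eq:asymptowl} only holds for the non-generic sequences \eqref{eq:asymptoticqk}, while the proposition concerns arbitrary admissible critical $\qseq$).

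The genuine gap is the converse. The $\nu$-harmonicity of $h^{\uparrow}$ is \emph{linear} in $\nu$, whereas the loop equation for your candidate partition functions $W^{(\ell)}:=\kappa^{-\ell-1}\nu(-1-\ell)/2$ is \emph{quadratic} in $\nu$ (after substitution it reads $\nu(-1-\ell)=\sum_{k\geq0}\nu(k)\nu(-1-\ell-k)+\tfrac12\sum_{k=-\ell}^{-1}\nu(k)\nu(-1-\ell-k)$); ``running the computation of the previous paragraph in reverse'' cannot produce a quadratic identity from a linear one, and ``induction on the number of vertices'' is unavailable because revealing a face preserves the vertex count. The actual bridge is the Marckert--Miermont criterion \cite{MM07}: one must show that $\sum_{m}\nu(m)=1$ together with harmonicity forces $f_{\qseq}(x)=1-1/x$ to have the (double) root $x=1/(4\kappa)$. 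For instance the displayed admissibility equation $\sum_{k}\binom{2k-1}{k-1}(4\kappa)^{1-k}q_{k}=1-4\kappa$ is, upon noting $\binom{2k-1}{k-1}4^{1-k}=2h^{\downarrow}(k)$, exactly the $\nu$-harmonicity of $h^{\downarrow}(\ell)=h^{\uparrow}(\ell+1)-h^{\uparrow}(\ell)$ at $\ell=1$; it is this kind of generating-function translation, not a reversal of the forward computation, that the converse requires. Finally, for the oscillation, the nonnegative martingale $h^{\uparrow}(W_{n\wedge T})$ cleanly rules out drift to $+\infty$, but excluding drift to $-\infty$ needs more than invoking ``$\rho=1/2$''; the cheapest route uses the bounded harmonic function $h^{\downarrow}$, which tends to $0$ at infinity and equals $1$ at $0$.
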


\subsection{Edge-peeling process}
\subsubsection{Submaps in the primal and dual lattices}
Let $\map$ be a (rooted bipartite) planar map and denote by $\map^\dagger$ its dual map whose vertices are the faces of $\map$ and whose edges are dual to those of $\map$. The origin of $\map^\dagger$ is the root face $\rootface$ of $\map$. Let $ \mathfrak{e}^\circ$ be a finite connected subset of edges of $\map^\dagger$ such that the origin of $\map^\dagger$ is in $  \mathfrak{e}^\circ$, or more precisely incident to $ \mathfrak{e}^\circ$ (the letter ``e'' stands for explored). We associate to $ \mathfrak{e}^\circ$ a planar map $ \mathfrak{e}$ obtained roughly speaking by gluing the faces of $\map$ corresponding to the vertices in $ \mathfrak{e}^\circ$ along the (dual) edges of $  \mathfrak{e}^\circ$, see Fig.\,\ref{fig:surgery}. The resulting map, rooted at the root edge of $\map$, is a finite (rooted bipartite) planar map given with  several distinguished faces $ h_{1}, \ldots ,h_{k}  \in  \mathsf{Faces}(  \mathfrak{e})$ called the holes of $  \mathfrak{e}$ and corresponding to the connected components of $ \map^\dagger \backslash  \mathfrak{e}^\circ$. These faces are moreover simple meaning that there is no pinch point on their boundaries and that these boundaries do not share common vertices. We call such an object a planar map with holes. We say that $  \mathfrak{e}$ is a submap of $\map$ and write
\[ \mathfrak{e} \subset \map,\]
because $ \map$ can be obtained from $ \mathfrak{e}$ by gluing inside each hole $ h_{i}$ of $  \mathfrak{e}$ a bipartite planar map $ \mathfrak{u}_{i}$ of perimeter $ \mathrm{deg}( h_{i})$ ($ \mathfrak{u}$ stands for unexplored). To perform this operation we must assume that we have distinguished an oriented edge on the boundary of each hole $ h_{i}$ of $ \mathfrak{e}$ on which we glue the root edge of $ \mathfrak{u}_{i}$. We will not specify this further since these edges can be arbitrarily chosen using a deterministic procedure given $ \mathfrak{e}$. Notice that during this gluing operation it might be that several edges on the boundary  of a given hole of $ \mathfrak{e}$ get identified because the boundary of $ \mathfrak{u}_{i}$ may not be simple, see Fig.\,\ref{fig:surgery} below. However, this operation is rigid (see \cite[Definition 4.7]{AS03}) in the sense that given $ \mathfrak{e} \subset \map$ the maps $ (\mathfrak{u}_{i})_{1 \leq i \leq k}$ are uniquely defined. This definition even makes sense when $ \mathfrak{e}$ is a finite map and $ \map$ is an infinite map. Reciprocally, if $ \mathfrak{e} \subset \map$ is given, one can recover $ \mathfrak{e}^\circ$ the connected subset of edges of $ \map^\dagger$ as the set of dual edges between faces of $ \mathfrak{e}$ which are not holes.\medskip

The above discussion shows that there are two equivalent points of view on submaps of $ \map$: either they can be seen as connected subsets $\mathfrak{e}^\circ$ of edges of $\map^\dagger$ containing the origin, or as planar maps $ \mathfrak{e} \subset \map$ with (possibly no) holes that, once filled-in by proper maps, give back $\map$. In this paper, we will mostly work with the second point of view.

\begin{figure}[!h]
 \begin{center}
 \includegraphics[width=12cm]{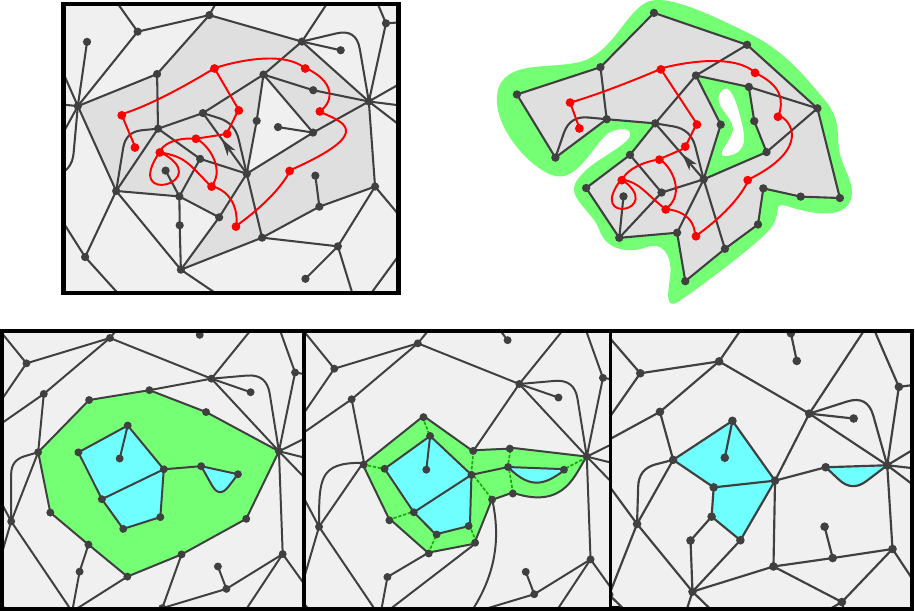}
 \caption{ \label{fig:surgery} Illustration of the duality between connected subsets of edges on the dual map and their associated submaps on the primal lattice. The gluing operation is illustrated below.}
 \end{center}
 \end{figure}

\subsubsection{Peeling exploration}
\label{sec:peelingexplo}
Suppose that $ \map$ is a (rooted bipartite) planar map. A branched edge-peeling exploration of $ \map$ is a sequence of increasing  submaps of $\map$ \[  \mathfrak{e}_{0} \subset \mathfrak{e}_{1} \subset \cdots \subset \mathfrak{e}_{n} \subset \cdots \subset \map,\] such that $ \mathfrak{e}_{i}$ is a planar map with holes whose number of inner edges, i.e. the ones not incident to a hole, is exactly $i \geq 0$, at least as long as the exploration has not stopped. The map $  \mathfrak{e}_{0}$ is made of a simple face of degree $ \mathrm{deg}(\rootface)$  corresponding to the root face $  \rootface$ of the map (recall that if necessary, one can always see a planar map as a map with root face degree $2$) and a unique hole of the same perimeter. Next, the exploration depends on an algorithm $ \mathcal{A}$ which associates to each map with holes $ \mathfrak{e}$ one edge $ \mathcal{A}( \mathfrak{e})$ on the boundary of one of its holes or the element $\dagger$ which we interpret as the will to stop the exploration.  This edge ``to peel'' $ \mathcal{A}( \mathfrak{e}_{i})$ tells us how to explore in $ \map$ in order to go from $ \mathfrak{e}_{i}$ to $ \mathfrak{e}_{i+1}$. More precisely, there are two cases:
\begin{itemize}
\item	 \textit{Case 1: }if the face on the other side of $\mathcal{A}( \mathfrak{e}_{i})$ corresponds to a new face in $ \map$ then $ \mathfrak{e}_{i+1}$ is obtained by adding to $ \mathfrak{e}_{i}$ the face adjacent to $  \mathcal{A}( \mathfrak{e}_{i})$ inside the corresponding hole of $ \mathfrak{e}_{i}$, see Fig.\,\ref{fig:peeling}.
\item  \textit{Case 2:} if the face on the other side of $ \mathcal{A}( \mathfrak{e}_{i})$ is already a face of $ \mathfrak{e}_{i}$ that means that $  \mathcal{A}( \mathfrak{e}_{i})$ is identified with another edge (necessarily adjacent to the same hole) in $ \map$.  Then $ \mathfrak{e}_{i+1}$ is obtained by performing this identification inside $ \mathfrak{e}_{i}$. This results  in splitting the corresponding hole in $ \mathfrak{e}_{i}$ yielding two holes in $ \mathfrak{e}_{i+1}$. The holes of perimeter $0$ are automatically filled-in with the vertex map, in particular the above process may close a hole which was made of two edges that have been identified in $ \map$, see Fig.\,\ref{fig:peeling}.
\end{itemize}
\begin{remark} At this point, the reader may compare the above presentation with that of \cite[Section 2.3]{BCKscalingpeeling} in order to understand the difference between the edge-peeling and the face-peeling processes. More precisely, when dealing with the face-peeling process the sequence $ \mathfrak{e}_{0} \subset \cdots \subset \mathfrak{e}_{i} \subset \cdots \subset \map$ of explored parts is again a sequence of maps with simple holes\footnote{with the slight difference that in \cite{BCKscalingpeeling} the holes can share vertices but not edges} but (unless the peeling has stopped), $ \mathfrak{e}_{i+1}$ is obtained from $ \mathfrak{e}_{i}$ by the addition of one face. Furthermore, in the case of the face-peeling process, $ \map$ is obtained from $ \mathfrak{e}_{i}$ by gluing maps with \emph{simple} boundary into the holes of $ \mathfrak{e}_{i}$.
\end{remark}
\begin{remark}
One can alternatively represent a peeling exploration $ \mathfrak{e}_{0} \subset \mathfrak{e}_{1} \subset \cdots \subset \map$ as the associate sequence of growing connected subset of edges $ (\mathfrak{e}_{i}^\circ)_{i \geq 0}$ on the dual map $\map^\dagger$ such that $ \mathfrak{e}_{i+1}^\circ$ is obtained from $ \mathfrak{e}_{i}^\circ$ by adding one edge of $\map^\dagger$ provided that connectedness is preserved (unless the exploration has stopped). We will however mostly use the first point of view.
\end{remark}

\begin{figure}[!h]
 \begin{center}
 \includegraphics[width=14cm]{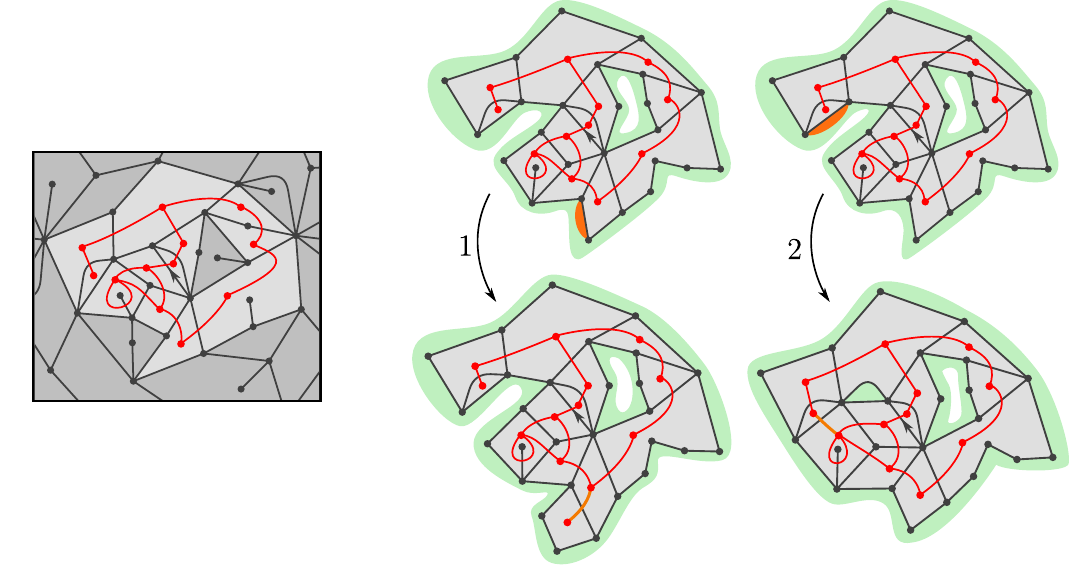}
 \caption{Illustration of the two cases which may happen when peeling an edge. In the first case, we add a face, in the second case we glue two edges of the boundary of a hole and create a new hole (possibly of perimeter $0$ when gluing two consecutive edges).}\label{fig:peeling}
 \end{center}
 \end{figure}

In the above branched edge-peeling exploration the evolving explored parts $( \mathfrak{e}_{i})_{i \geq 0}$ may have several holes. However in what follows we will restrict ourself to explorations of one-ended infinite maps $ \map$ (see \cite{BBCK16,BCKscalingpeeling} for the study of branched peeling explorations). In that case, we will fill-in all the holes of $ \mathfrak{e}_{i}$ whose associate component in $ \map$ is finite. That is, in \textit{case 2} above, when the hole of $ \mathfrak{e}_{i}$ is split into two new holes by the identification of two of its edges, we automatically fill-in the hole which is associated to a finite part in $ \map$. This gives rise to an exploration 
$ \mathfrak{e}_{0} \subset \cdots \subset \mathfrak{e}_{i} \subset \cdots \subset \map$ where $ \mathfrak{e}_{i+1}$ may have more than one inner edge on top of $ \mathfrak{e}_{i}$, but $ \mathfrak{e}_{i}$ always has a single hole on which we iteratively choose the edges to peel using the algorithm $ \mathcal{A}$. In the following, we will always consider such explorations and simply call them ``peeling explorations''. Let us now recall the results of \cite{Bud15}:

\begin{theorema}[\cite{Bud15}] \label{thm:budd15} Let $(P_{i},V_{i})_{i \geq 0}$ respectively be the half-perimeter of the unique hole and the number of inner vertices in a peeling exploration (with only one hole) of $ B_{\infty}$.  Then  $ (P_{i},V_{i})_{i \geq 0}$ is a Markov chain whose law  does not depend on the algorithm $ \mathcal{A}$ and is described as follows:
\begin{itemize}
\item the chain $(P_{i})_{i \geq 0}$ has the same law as $(W^\uparrow_{i})_{i\geq 0}$ the Doob $h^\uparrow$-transform of the random walk $(W)_{i \geq 0}$ started from $W_{0}=1$ and with i.i.d.\,independent increments of law $\nu$ given in \eqref{eq:defnu}. Equivalently, $(P_{i})_{i \geq 0}$ has the law of $(W_{i})_{i \geq 0}$ conditioned to never hit $ \mathbb{Z}_{\leq 0}$.
\item Conditionally on $(P_{i})_{i \geq 0}$ the variables $(V_{i+1}-V_{i})_{ i \geq 0}$ are independent and are distributed as the number of vertices in a $ \mathbf{q}$-Boltzmann planar map with perimeter $2(P_{i}-P_{i+1}-1)$ (where it is understood that this is $0$ when  $P_{i}-P_{i+1}-1 <0$).
\end{itemize}
\end{theorema}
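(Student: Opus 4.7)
The natural approach is to derive the one-step peeling transition law for a finite $\mathbf{q}$-Boltzmann map of perimeter $2\ell$ by pure enumeration, then pass to the local limit $B_{\infty}$, and observe that the conditioning ``to keep the unexplored piece infinite'' is exactly what promotes the random walk with step law $\nu$ to its Doob $h^{\uparrow}$-transform.

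At the finite level, the spatial Markov property is built into the product form \eqref{eq:defface}: given the explored submap $\mathfrak{e}$ taken from a $\mathbf{q}$-Boltzmann map of perimeter $2\ell$, the unexplored piece $\mathfrak{u}$ filling the unique hole is, conditionally on $\mathfrak{e}$, a $\mathbf{q}$-Boltzmann map of matching perimeter. Using the partition function $W^{(\ell)}$ from \eqref{eq:defWl}, the two peeling alternatives have explicit probabilities: in Case 1, revealing a fresh face of degree $2k$ (so that the half-perimeter increases by $k-1$) has probability $q_{k}\,W^{(\ell+k-1)}/W^{(\ell)}$; in Case 2, identifying the peeled edge with another boundary edge in such a way that the surviving hole has half-perimeter $m\in\{0,\dots,\ell-1\}$ and the sealed-off side has half-perimeter $\ell-1-m$ has probability $2\,W^{(\ell-1-m)}W^{(m)}/W^{(\ell)}$, the factor $2$ coming from the two positions of the second edge on either side of the peeled one.

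To transfer these formulas to $B_{\infty}$, I would combine them with the asymptotic \eqref{eq:asymptowl} and with the convergence $\kappa^{\ell}W_{n}^{(\ell)}/(\kappa W_{n}^{(1)})\to h^{\uparrow}(\ell)$ recalled before Proposition \ref{prop:qnubijection}. Together these identify the one-ended infinite $\mathbf{q}$-Boltzmann map of perimeter $2\ell$, obtained as the local limit of conditioned finite ones, with an ``infinite-volume weight'' $\mathsf{w}^{(\ell)}\propto\kappa^{-\ell}h^{\uparrow}(\ell)$ relative to the finite Boltzmann measure. Reweighting the Case 1 probability by $\mathsf{w}^{(\ell+k-1)}/\mathsf{w}^{(\ell)}$ gives, via \eqref{eq:defnu},
$$q_{k}\,\kappa^{1-k}\,\frac{h^{\uparrow}(\ell+k-1)}{h^{\uparrow}(\ell)}\;=\;\nu(k-1)\,\frac{h^{\uparrow}(\ell+k-1)}{h^{\uparrow}(\ell)},$$
and the parallel reweighting of Case 2, with negative increment $j=m-\ell$, yields $2\,W^{(-1-j)}\kappa^{-j}\,h^{\uparrow}(m)/h^{\uparrow}(\ell)=\nu(j)\,h^{\uparrow}(m)/h^{\uparrow}(\ell)$. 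These are exactly the transition probabilities of the Doob $h^{\uparrow}$-transform of the $\nu$-walk started from $P_0=1$, proving the first bullet. The second bullet then follows from the same spatial Markov property: in Case 2 the sealed-off finite piece is, conditionally on its perimeter, a genuine $\mathbf{q}$-Boltzmann map of perimeter $2(P_i-P_{i+1}-1)$ and the successive fillings are independent across peeling steps; Case 1 adds no new inner vertices, consistent with the convention that $V_{i+1}-V_i=0$ when $P_i-P_{i+1}-1<0$.

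The principal difficulty I anticipate is a clean justification of the weight $\mathsf{w}^{(\ell)}\propto\kappa^{-\ell}h^{\uparrow}(\ell)$ and of the spatial Markov property at infinity: one has to show that the unexplored region of $B_{\infty}$ given $\mathfrak{e}_i$ is distributed as a one-ended infinite Boltzmann map of perimeter $2P_i$, and that the correct way of conditioning a finite Boltzmann law ``to stay infinite'' is precisely the $h^{\uparrow}$-Doob transform rather than some other harmonic reweighting; this step is where Proposition \ref{prop:qnubijection} and the minimality of $h^{\uparrow}$ enter crucially. Once this structural fact is in hand, the algorithm-independence of the joint law of $(P_i,V_i)$ is automatic, since the one-step kernel derived above depends only on the current perimeter.
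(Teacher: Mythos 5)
The paper does not prove this statement: it is Theorem~A, imported verbatim from \cite{Bud15}, and the only trace of its proof in the present text is the display of the resulting one-step kernels \eqref{eq:qp}--\eqref{eq:qp2}. Your reconstruction is correct and follows exactly the route of \cite{Bud15}: the finite-volume transition probabilities $q_k W^{(\ell+k-1)}/W^{(\ell)}$ and $2W^{(m)}W^{(\ell-1-m)}/W^{(\ell)}$ are the right enumerative input, and your $h^\uparrow$-reweighting reproduces \eqref{eq:qp} and \eqref{eq:qp2} precisely (with harmonicity of $h^\uparrow$ guaranteeing normalization, as the paper notes after \eqref{eq:qp2}). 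The one step you rightly flag as the real work --- showing that the unexplored region of $B_\infty$ given $\mathfrak{e}_i$ is an infinite Boltzmann map of perimeter $2P_i$ weighted by $\kappa^{-\ell}h^\uparrow(\ell)$, which rests on the local-limit convergence $\kappa^{\ell}W_n^{(\ell)}/(\kappa W_n^{(1)})\to h^\uparrow(\ell)$ recalled before Proposition~\ref{prop:qnubijection} --- is exactly what \cite{Bud15} supplies, so there is no gap in the outline, only in the (acknowledged) technical justification of that limit.
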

In fact, the last theorem is still true if the peeling algorithm  $ \mathcal{A}$ is randomized as long as it does not use the information of the unexplored part at each peeling step. More precisely, conditionally on the current exploration $ \mathfrak{e}_{i}$, once we have selected an edge on the boundary of the hole of $ \mathfrak{e}_{i}$ independently of the remaining part of $ B_{\infty}$, assuming that the half-perimeter of this hole is $\ell \geq 1$, then the peeling of this edge leads to the discovery of a new face of degree $2k$ for $k\geq 1$ with probability 
 \begin{eqnarray} \label{eq:qp} p_{k}^{(\ell)}:=\nu(k-1) \frac{h^\uparrow(\ell+k-1)}{h^{\uparrow}(\ell)}. \end{eqnarray} 
 Otherwise this edge is identified with another edge of the boundary and the peeling swallows a bubble of length $2k$ for $0\leq k< \ell-1$ ($k=0$ corresponding to a bubble consisting of the single vertex-map) directly to the left of $\mathcal{A}(\mathfrak{e}_i)$ with probability
 \begin{eqnarray} \label{eq:qp2} p_{-k}^{(\ell)}:= \frac{1}{2}\nu(-k-1) \frac{h^\uparrow(\ell-k-1)}{h^{\uparrow}(\ell)}, \end{eqnarray}
 or to the right with the same probability. Notice that $\sum_{k=1}^\infty p_k^{(\ell)} + 2\sum_{k=0}^{\ell-2} p_{-k}^{(\ell)}=1$ is ensured precisely because $h^\uparrow$ is harmonic for the random walk $(W_i)_{i\geq 0}$. We will use many times below the fact that the probabilities of negative jumps for the process $(P)$ are uniformly dominated by those of $\nu$, more precisely since $h^\uparrow$ is non-increasing we have for $k \geq 1$ 
 \begin{eqnarray} \label{eq:bounddeltaperi} \mathbb{P}(\Delta P_{i} = -k \mid P_{i}= \ell ) = 2p_{-(k-1)}^{(\ell)} \leq \nu(-k) \leq C k^{-a},  \end{eqnarray} for some $C>0$ independent of $\ell \geq 1$ and $k \geq 1$.

 We now present two particular peeling algorithms that we will use in this work.

\subsection{Peeling by layers on the dual map $\map^\dagger$}
\label{sec:algodual}
It does not seem easy to use the edge-peeling process to systematically study the graph metric on $ B_{\infty}$ (this is because the degree of the faces are not bounded and so when discovering a new large face, one cannot \emph{a priori} know what is the distance to the root of all of its adjacent vertices). However, as in \cite{AB14,CLGpeeling} for the face-peeling process it is still possible to use the edge-peeling process in order to study the graph metric on the \emph{dual} of $ B_{\infty}$. Let us describe now the precise peeling process that we use for that.\medskip

Let $ \map$ be an infinite (rooted bipartite) one-ended planar map. We denote $ \map^\dagger$ the dual map of $\map$ and by $ \mathrm{d}_{ \mathrm{gr}}^\dagger$ the dual graph distance on $ \map^\dagger$. If $f \in \mathsf{Faces}(\map)$ the dual distance to the root face  $ \mathrm{d}_{ \mathrm{gr}} ^\dagger( f, \rootface)$ is called the \emph{height} of $f$ in $ \map$. The following peeling algorithm $ \mathcal{L}^\dagger$ is adapted to the dual graph distance (and fills the finite holes when created). Recall that the exploration starts with $ \mathfrak{e}_{0}$, the map made of a simple face of degree $\deg( \rootface)$ (and a unique hole of the same perimeter) which in the case of $B_{\infty}$ will be a $2$-gon after splitting the root edge as explained above. Inductively suppose that at step $i \geq 0$, the following hypothesis is satisfied:
\begin{center}
\begin{minipage}{14cm} $(H)$: There exists an integer $h \geq 0$ such that the explored map $ \mathfrak{e}_{i} \subset \map$ has a unique hole $ f^*$ such that all the faces adjacent to $f^*$ inside $ \mathfrak{e}_{i}$ are at height $h$ or $h+1$ in $\map$.  Suppose furthermore that the boundary edges of $f^*$ in $ \mathfrak{e}_{i}$ that are adjacent to faces at height $h$ form a connected part of the boundary of $ f^*$.
\end{minipage}
\end{center}
We will abuse notation and speak of the height of an edge of the boundary of the hole of $ \mathfrak{e}_{i}$ for the height of its incident face inside $ \mathfrak{e}_{i}$.  If $(H)$ is satisfied by $ \mathfrak{e}_{i}$ the next edge to peel $ \mathcal{L}^\dagger( \mathfrak{e}_{i} )$ is chosen as follows:
\begin{itemize}
\item If all edges incident to the hole $f^*$ of $ \mathfrak{e}_{i}$ are at height $h$ then $ \mathcal{L}^\dagger( \mathfrak{e}_{i})$ is any (deterministically chosen) edge on the boundary of $f^*$,
\item Otherwise $ \mathcal{L}^\dagger( \mathfrak{e}_{i})$ is the unique edge at height $h$ such that the edge immediately on its left is at height $h+1$.
\end{itemize}

\begin{figure}[!h]
 \begin{center}
 \includegraphics[width=10cm]{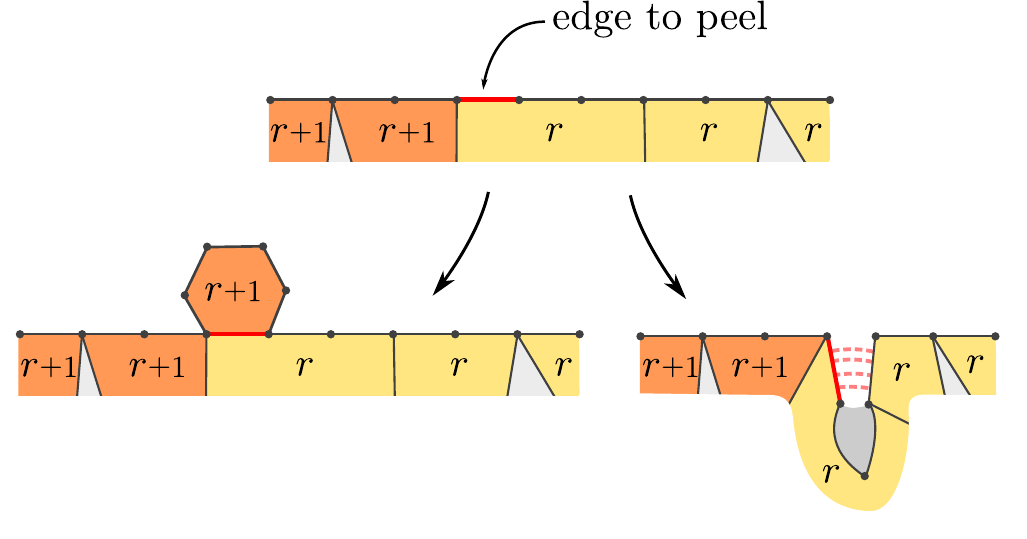}
 \caption{Illustration of the peeling using algorithm $ \mathcal{L}^\dagger$.}
 \end{center}
 \end{figure}
It is easy to check by induction that if one iteratively peels the edge determined by $\mathcal{L}^\dagger$ starting from $\mathfrak{e}_0$ then for every $i \geq 0$ the explored map $ \mathfrak{e}_{i}$ satisfies the hypothesis $(H)$ and therefore $\mathcal{L}^\dagger$ determines a well-defined peeling exploration of $\map$. Let us give a geometric interpretation of this peeling exploration. We denote by $ \mathsf{H}( \mathfrak{e}_{i})$ the minimal height in $\map$ of a face adjacent to the unique hole in $\mathfrak{e}_i$ and let $\theta_{r} = \inf\{ i \geq 0 : \mathsf{H}( \mathfrak{e}_{i}) = r\}$ for $r \geq 0$. On the other hand, for $r \geq 0$, we define by 
\[ \mathrm{Ball}_{r}^\dagger( \map),\]
the map made by keeping only the faces of $\map$ that are at height less than or equal to $r$ and cutting along all the edges which are adjacent on both sides to faces at height $r$ (see Fig.\,\ref{fig:dualball} for an example). Equivalently, the corresponding connected subset \[\left(\mathrm{Ball}_{r}^\dagger( \map)\right)^\circ\] of dual edges in $ \map^\dagger$ is given by those edges of $ \map^\dagger$ which contain at least one endpoint at height strictly less than $r$.
\begin{figure}[t]
	\begin{center}
		\includegraphics[width=12cm]{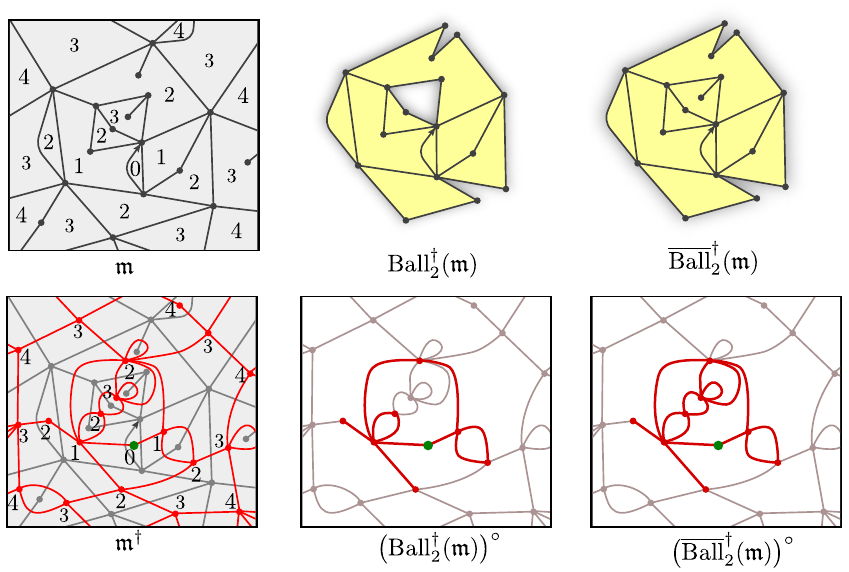}
		\caption{Example of a geodesic ball (and its hull) of radius $r=2$ with respect to the dual graph distance. \label{fig:dualball}}
	\end{center}
\end{figure}
 By convention we also put $ \mathrm{Ball}_{0}^\dagger(\map)$ to be the root face of $ \map$. Also, we write $ \overline{\mathrm{Ball}}_{r}^\dagger( \map)$ for the hull of these balls,  which are obtained by filling-in all the finite holes of $\mathrm{Ball}_{r}^\dagger( \map)$ inside $\map$ (recall that $\map$ is infinite and one-ended). After doing so, $ \overline{\mathrm{Ball}}_{r}^\dagger( \map)$ is a planar map with a single hole and we easily prove by induction on $r\geq 0$ that
 \begin{eqnarray} \label{eq:interpretation1} \mathfrak{e}_{\theta_{r}} = \overline{ \mathrm{Ball}}^\dagger_{r}( \map).  \end{eqnarray}
In the case when this edge-peeling exploration is performed on $ B_{\infty}$ we denote by $P_{i},V_{i}, H_{i}$ respectively the half-perimeter, the number of inner vertices and the minimal height of a face adjacent to the unique hole of $ \mathfrak{e}_{i}$ for $i \geq 0$.
\subsection{Eden model and Uniform peeling}
\label{sec:edenalgo}
We are using the same setup as in the previous section. Let $ \map$ be an infinite one-ended planar map. On the dual map $\map^\dagger$ of $\map$ we sample independent  weights $x_{e}$ for each edge $ e \in \mathsf{Edges}( \map^\dagger)$ distributed according to the exponential law $ \mathcal{E}(1)$ of mean 1, i.e. with density $e^{-x} \mathrm{d}x \mathbf{1}_{x >0}$. These weights can be used to modify the usual dual graph metric on $ \map^\dagger$ by considering the first-passage percolation distance: for $f_{1},f_{2} \in \mathsf{Faces}(\map)$
\[ \mathrm{d_{fpp}}( f_{1}, f_{2}) = \inf \sum_{ e \in \gamma} x_{e},\]
where the infimum is taken over all paths $\gamma : f_{1} \to f_{2}$ in the dual map $\map^\dagger$. This model (first-passage percolation with exponential edge weights on the dual graph) is often referred to as the Eden model on the primal map $\map$ \cite{AB14}. It is convenient in this section to view the edges of the map $ \map^\dagger$ as  real segments of length $x_{ e}$ for $e \in \mathsf{Edges}( \map^\dagger)$ glued together according to incidence relations of the map. This operation turns $\map^\dagger$ into a continuous length space (but we keep the same notation) and the distance $\mathrm{d_{fpp}}$ extends easily to all the points of this space. Now for $t >0$ we denote by \[\mathrm{Ball}^{ \mathrm{fpp}}_{t}( \map)\] the submap of $\map$ whose associated connected subset of dual edges $\left(\mathrm{Ball}^{ \mathrm{fpp}}_{t}( \map) \right)^\circ$ in $\map^\dagger$ is the set of all dual edges which have been fully-explored by time $t>0$, i.e.\,whose points (in the length space) are all at \textsc{fpp}-distance less than $t$ from the origin of $\map^\dagger$ (the root-face of $\map$). As usual, its hull $\overline{ \mathrm{Ball}}^{ \mathrm{fpp}}_{t}( \map)$ is obtained by filling-in the finite components  of its complement. It is easy to see that there are jump times $0=t_{0} < t_{1}<t_{2}< \cdots$ for this process and that almost surely (depending on the randomness of the $x_{e}$) the map $\overline{ \mathrm{Ball}}^{ \mathrm{fpp}}_{t_{i+1}}( \map)$ is obtained from $\overline{ \mathrm{Ball}}^{ \mathrm{fpp}}_{t_{i}}( \map)$ by the peeling of an appropriate edge (and by filling-in the finite component possibly created). The following proposition only relies on the randomness of the weights, the map $\map$ is fixed. 
\begin{proposition} If $\map$ is an infinite planar map with one end whose (dual) edges are endowed with i.i.d.\,exponential weights then we have:
\begin{itemize} \item the law of $( \overline{ \mathrm{Ball}}^{ \mathrm{fpp}}_{t_{i}}( \map))_{i \geq 0}$ is that of a uniform peeling on $\map$: conditionally on the past exploration, the next edge to peel is a uniform edge on the boundary of the explored part $\mathfrak{e}_{i}$;
\item conditionally on $( \overline{ \mathrm{Ball}}^{ \mathrm{fpp}}_{t_{i}}( \map))_{i \geq 0}$ the variables $t_{i+1}-t_{i}$ are independent and distributed as exponential variables of parameter given by the perimeter (that is twice the half-perimeter) of the explored part at time $i$.
\end{itemize}
\label{prop:peelingeden=uniform}
\end{proposition}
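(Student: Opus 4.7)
The proof hinges on the memoryless property of the exponential distribution applied at each peeling step. I would proceed by induction on $i$, establishing the following local Markov statement: conditionally on the $\sigma$-algebra $\mathcal{F}_{t_i}$ generated by the exploration up to time $t_i$ -- that is, by the sequence of hulls $(\overline{\mathrm{Ball}}^{\mathrm{fpp}}_{t_j}(\map))_{0\le j\le i}$, the jump times $t_0=0<t_1<\cdots<t_i$, and the FPP-weights of all dual edges incident to $H_i:=\overline{\mathrm{Ball}}^{\mathrm{fpp}}_{t_i}(\map)$ from its interior -- the residual crossing times of the $2P_i$ primal edges on the boundary of the unique hole of $H_i$ are i.i.d.\,$\mathrm{Exp}(1)$ random variables, mutually independent of the FPP-weights of dual edges strictly outside the hull.

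Once this residual-exponential property is established, the proposition follows at once. Indeed, the next inter-jump time $t_{i+1}-t_i$ equals the minimum of $2P_i$ i.i.d.\,$\mathrm{Exp}(1)$ clocks, hence is $\mathrm{Exp}(2P_i)$; and by the standard argmin-is-uniform property of i.i.d.\,exponentials, the edge peeled at $t_{i+1}$ is uniform over the $2P_i$ boundary edges of $H_i$, independently of $t_{i+1}-t_i$. Iterating identifies $(H_i,t_i)_{i\ge 0}$ as a uniform peeling chain of $\map$ with independent exponential inter-jump times conditioned on the hull sequence, which gives both assertions.

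The main subtle point lies in the induction step and concerns the filling-in of finite regions. At the peeling event $t_{i+1}$, it may happen that the peeled edge traps off a finite dual-component, which then gets absorbed into the hull: in this case, some primal edges on the boundary of $H_i$ become interior to $H_{i+1}$ without having been peeled themselves, and their weights remain ``hidden''. To propagate the induction I would argue that (a) the FPP-weights of edges strictly inside the freshly absorbed region are independent of the weights on the new boundary $\partial H_{i+1}$, by the a-priori independence of the family $(x_e)$, and hence can be integrated out without affecting the future dynamics; (b) previously-boundary edges that survive as boundary edges of $H_{i+1}$ retain $\mathrm{Exp}(1)$ residuals by memorylessness; and (c) freshly exposed boundary edges in $\partial H_{i+1}\setminus \partial H_i$ -- those that appear along the newly discovered face (case 1 of Section~\ref{sec:peelingexplo}) or along the far side of the identified edge (case 2) -- carry fresh $\mathrm{Exp}(1)$ weights because they have not been touched by the exploration up to $t_{i+1}$. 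Combining (a)--(c) recovers the residual-exponential property at step $i+1$ and closes the induction.
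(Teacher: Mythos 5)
Your overall strategy (induction on peeling steps, memorylessness, min/arg\,min of exponentials) matches the paper's, and your handling of the filled-in finite regions in points (a)--(c) is correct. But your inductive hypothesis --- that the residual crossing times of the $2P_i$ primal boundary edges of the hole are i.i.d.\ $\mathrm{Exp}(1)$ --- is not literally true, and the reason it fails is exactly the point the paper's proof is built around. Some of the $2P_i$ boundary edges of the hole come in pairs that are identified to a \emph{single} edge of $\map$ (these are the edges whose peeling triggers case~2 of Section~\ref{sec:peelingexplo}); such a pair is crossed by one and the same dual edge, carrying one weight $x_e$, which the exploration consumes from both ends simultaneously. There is no way to attach two independent $\mathrm{Exp}(1)$ residuals to the two members of the pair as actual functions of the weights: the single residual $y_e=x_e-(t_i-\mathrm{d_{fpp}}(e_-,\rootface))-(t_i-\mathrm{d_{fpp}}(e_+,\rootface))$ is $\mathrm{Exp}(1)$, but it is depleted at rate $2$, so it contributes $y_e/2\sim\mathcal{E}(2)$ to the waiting time. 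Your assertion that $t_{i+1}-t_i$ is the minimum of $2P_i$ i.i.d.\ $\mathrm{Exp}(1)$ clocks therefore does not follow from memorylessness alone.

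What is missing is the distributional identity that rescues the statement: writing the waiting time as $\inf\{y_e : e \text{ type-1}\}\wedge\frac12\inf\{y_e: e\text{ type-2}\}$, one gets $\mathcal{E}(\#\{\text{type-1}\}+2\#\{\text{type-2}\})=\mathcal{E}(2P_i)$, and one then replaces each $\frac12 y_{e}\sim\mathcal{E}(2)$ (type-2) by the minimum of two \emph{fresh} independent $\mathcal{E}(1)$ variables attached to the two boundary edges dual to $e$; only after this substitution does the arg\,min become uniform over the $2P_i$ boundary edges. Without this step you cannot conclude either the rate $2P_i$ (rather than the number of distinct dual edges on the frontier) or the uniformity of the peeled edge, so the core of the proposition is asserted rather than proved. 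You should restate your inductive hypothesis as an equality in law after this doubling substitution and supply the computation above.
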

\proof Fix $\map$ and let us imagine the situation at time $t_{i}$ for $i \geq 0$. We condition on the sigma-field $ \mathcal{F}_{i}$ generated by all the exploration up to time $t_{i}$. Let us examine the edges in $\map^\dagger$ which are dual to the boundary of $  \mathfrak{e}_{i}=\overline{ \mathrm{Ball}}_{t_{i}}^{ \mathrm{fpp}}(\map)$. These come in two types: \emph{type-1} edges that are adjacent to a new face in the unexplored part (that is, if we peel one of those edges we are in case 1 of Section \ref{sec:peelingexplo}), and \emph{type-2} edges that link two faces adjacent to the boundary of the explored part (that is, if we peel one of these edges we are in case 2 of Section \ref{sec:peelingexplo}). See Fig.\,\ref{fig:eden-uniform}.

\begin{figure}[t]
 \begin{center}
 \includegraphics[width=12cm]{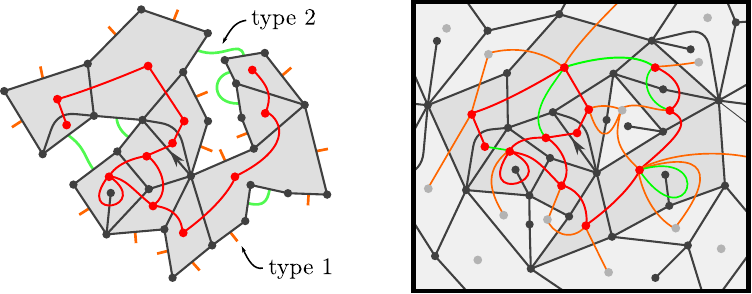}
 \caption{Illustration of the proof of Proposition \ref{prop:peelingeden=uniform}. The edges of the first type are in orange and those of the second type are in green. Regardless of their number and locations, the  next edge to peel can be taken unifomly on the boundary and the increase of time is given by an exponential variable of parameter given by the perimeter.}\label{fig:eden-uniform}
 \end{center}
 \end{figure}
 
Let us consider an edge $e^{(1)}$ of the first type and denote by $e^{(1)}_{-}$ its extremity in the explored region. Since this edge has not been fully explored at time $t_{i}$, it follows that its weight $x_{e^{(1)}}$ satisfies $x_{e^{(1)}} > t_{i}- \mathrm{d}_{ \mathrm{fpp}}(e^{(1)}_{-},  \rootface)$ and furthermore by properties of exponential variables conditionally on $ \mathcal{F}_{i}$
\[ y_{e^{(1)}}:= x_{e^{(1)}} -\big( t_{i}- \mathrm{d}_{ \mathrm{fpp}}(e^{(1)}_{-},  \rootface)\big)\] has the law $ \mathcal{E}(1)$ of an exponential variable of parameter $1$. Let us now examine the situation for an edge $e^{(2)}$ of the second type. We denote by $e^{(2)}_{-}$ and $e^{(2)}_{+}$ its endpoints. Since $e^{(2)}$ is being explored from both sides (in the length space representation) but has not been fully explored by time $t_{i}$, we have that $ x_{e^{(2)}} > \big( t_{i}- \mathrm{d_{fpp}}( e^{(2)}_{-}, \rootface) \big)+ \big( t_{i}- \mathrm{d_{fpp}}( e^{(2)}_{+}, \rootface) \big)$ and by the same argument as above conditionally on  $\mathcal{F}_{i}$
\[y_{ e^{(2)}}:=x_{e^{(2)}} -\big( t_{i}- \mathrm{d_{fpp}}( e^{(2)}_{-}, \rootface) \big)- \big( t_{i}- \mathrm{d_{fpp}}( e^{(2)}_{+}, \rootface) \big)\] is again exponentially distributed. Of course, an edge of the second type is dual to two edges of the boundary of $ \mathfrak{e}_{i}$. Apart from this trivial identification, the variables $y_{e}$ where $e$ runs over the edges dual to the boundary of $ \mathfrak{e}_{i}$ are, conditionally on $ \mathcal{F}_{i}$, independent of each other. Now, the time it takes until a new edge is fully explored is equal to 
\[t_{i+1}-t_{i} = \inf\{ y_{e} : e \mbox{ of the first type}\} \wedge  \frac{1}{2}\inf\{ y_{e} : e \mbox{ of the second type}\},\] where the factor $1/2$ again comes from the fact that edges of the second type are explored from both sides. By the above independence property, $t_{i+1}-t_{i}$ is thus distributed as an exponential variable of parameter
\[ t_{i+1}-t_{i} \overset{(d)}{=} \mathcal{E}( \#\{ \mathrm{edges \ of \ the \ first \ type}\} + 2 \# \{ \mathrm{edges \ of \ the \ second \ type}\}) = \mathcal{E}(2 \ell)\] where $2 \ell$ is the perimeter of the hole of $ \mathfrak{e}_{i}$. That proves the second part of the proposition. To see that conditionally on $ \mathcal{F}_{i}$ the next edge to peel  is uniform on the boundary, we may replace for each edge $e^{(2)}$   of the second type the variable $ \frac{1}{2}y_{e^{(2)}}$ of law $ \mathcal{E}(2)$ by the minimum of two independent exponential variables $\tilde{y}_{e_{1}^{(2)}}$ and $\tilde{y}_{e_{2}^{(2)}}$ of law $ \mathcal{E}(1)$ which we attach on the two edges dual to $ e^{(2)}$ on the boundary of $ \mathfrak{e}_{i}$. Finally, everything boils down to assigning to each edge of the boundary of the explored map an independent exponential variable of parameter $1$; the next edge to peel is the one carrying the minimal weight which is then uniform as desired. This completes the proof. \endproof

In the case when this edge-peeling exploration, also called the uniform peeling or Eden peeling, is performed on $ B_{\infty}$ we denote by $P_{i},V_{i},\tau_{i}$ for $i\geq0$ respectively the half-perimeter, the number of inner vertices and the jump times of the process $(\overline{ \mathrm{Ball}}_{t}^{ \mathrm{fpp}}( B_{\infty}))_{ t \geq 0}$. 

\section{Scaling limits for the perimeter and volume process}
\subsection{More on the perimeter process}
\label{sec:morehtransform}
Recall from Theorem \ref{thm:budd15} that the process of the half-perimeter $(P_{i})_{i\geq 0}$ of the only hole during an edge-peeling exploration of $B_{\infty}$ (which fills-in the finite holes) has the same law as $(W^\uparrow_{i})_{i \geq 0}$ the $h^\uparrow$-transform of the random walk $(W_{i})_{i \geq 0}$ started from $W_{0}=1$ whose critical step distribution $\nu$ is defined in \eqref{eq:defnu}.

 First, it is easy to see that the Markov chain $(P_{i})_{i \geq 0}$ or equivalently $(W^\uparrow_{i})_{i \geq 0}$ is transient. Indeed, if $T_{y}^\uparrow$ and $T_{y}$ denote the first hitting times of $y \in \mathbb{Z}_{>0}$ by respectively the chains $W^\uparrow$ and $W$, then we have 
 \begin{eqnarray}\label{eq:tooltransient} \mathbb{P}( T^\uparrow_{y}<\infty \mid W^\uparrow_{0} =p)=\frac{h^\uparrow(y)}{h^\uparrow(p)}\,  \mathbb{P}(W_k\geq 1, \;\forall k\leq T_y \mid W_{0} =p).  \end{eqnarray}
 Since $h^\uparrow$ is monotone (strictly) increasing on $\llbracket 1,\infty \llbracket$, the right-hand side is smaller than $1$ when $p>y$, hence $W^\uparrow$ is transient.

We now turn to estimating the expectation of $1/ W^\uparrow_{n}$. Those estimates will be crucial for the proofs of our main results. Recall that $h^\uparrow$ is $\nu$-harmonic on $\Z_{>0}$ and null on $ \mathbb{Z}_{ \leq 0}$. One can then consider the function $ h^\downarrow : \mathbb{Z} \to \mathbb{R}_{+}$ defined by 
 \begin{eqnarray} \label{eq:defhdown} h^\downarrow (\ell) = h^\uparrow(\ell+1) - h^{\uparrow}(\ell) = 2^{-2\ell}\binom{2\ell}{\ell}.  \end{eqnarray} Since $h^\uparrow$ is $\nu$-harmonic on $\{1,2,3,...\}$ it is not hard to see  that $h^\downarrow$ is $\nu$-harmonic on $\{1,2,3,...\}$ as well and satisfies furthermore $h^\downarrow(0) =1$. As for $h^\uparrow$, which gave us the conditioned walk $(W^\uparrow_{i})_{i \geq 0}$, one can consider the Markov process $(W^\downarrow_{i})_{i \geq 0}$ obtained as the Doob $h^\downarrow$-transform of the walk $(W_{i})_{i \geq 0}$ started from $W_{0}=p$. This process is easily seen (see \cite[Corollary 1]{Bud15}) to be the walk $W$ conditioned to hit $0$ before hitting $ \mathbb{Z}_{<0}$. For convenience we will set $W_i^\downarrow=0$ for all $i>j$ after its first hit of 0 at time $j$, which is almost surely finite due to the fact (Proposition A) that $W$ oscillates. We write $ \mathbb{P}_{p}$ and $ \mathbb{E}_{p}$ for the probability and expectation under which $W^\uparrow$ and $W^\downarrow$ are started from $p \geq 1$.

\begin{lemma} \label{lem:1/P} For any $p > 0$ and $n \geq 0$ we have
\begin{equation}\label{eq:Pupdowneq}
\mathbb{E}_p\left[ \frac{1}{W^\uparrow_n}\right] = \frac{\mathbb{P}_p(W^\downarrow_n > 0)}{p}.
\end{equation}
In particular, if $(P_{i})_{i \geq 0}$ is the half-perimeter process during an edge-peeling exploration of $B_{\infty}$ then 
\begin{equation}\label{eq:expinvP}
\mathbb{E}\left[ \frac{1}{P_n}\right] = 2\sum_{k=n+1}^\infty \frac{1}{k}\prob_1(W_k=0)  \quad \mbox{ and }\quad 
\sum_{n=0}^\infty \mathbb{E}\left[ \frac{1}{P_n}\right] = 2\sum_{k=1}^\infty \prob_1(W_k=0).
\end{equation}

\end{lemma}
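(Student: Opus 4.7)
\medskip

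\textbf{Proof plan.} The plan is to derive \eqref{eq:Pupdowneq} from the Doob-transform representations of $W^\uparrow$ and $W^\downarrow$, and then to obtain \eqref{eq:expinvP} by combining \eqref{eq:Pupdowneq} with Kemperman's cycle lemma (hitting-time identity) for the walk $W$.

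First I would write out the Doob transform definition: for $y>0$,
\[
\mathbb{P}_p(W^\uparrow_n = y) = \frac{h^\uparrow(y)}{h^\uparrow(p)}\,\mathbb{P}_p\!\left(W_n = y,\ W_k>0\ \forall k\le n\right),
\]
and similarly for $W^\downarrow$ (for the same event, since $W_k>0$ for $k\le n$ is exactly the condition that the walk has not yet been sent to the cemetery $0$). Using the key algebraic identity $h^\uparrow(\ell) = 2\ell\, h^\downarrow(\ell)$, which is immediate from \eqref{eq:defhdown} and the definition $h^\uparrow(\ell)=2\ell\,2^{-2\ell}\binom{2\ell}{\ell}$, I can rewrite
\[
\frac{1}{y}\cdot\frac{h^\uparrow(y)}{h^\uparrow(p)} \;=\; \frac{1}{p}\cdot\frac{h^\downarrow(y)}{h^\downarrow(p)}.
\]
Summing $1/y$ against the law of $W^\uparrow_n$, the right-hand side is exactly $\tfrac{1}{p}\,\mathbb{P}_p(W^\downarrow_n = y)$ for $y>0$, and summing over $y>0$ yields \eqref{eq:Pupdowneq}. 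Since $P_n \stackrel{(d)}{=} W^\uparrow_n$ under $\mathbb{P}_1$ by Theorem \ref{thm:budd15}, this gives $\mathbb{E}[1/P_n] = \mathbb{P}_1(W^\downarrow_n>0)$.

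Next I would evaluate $\mathbb{P}_1(W^\downarrow_n>0)$. Since $W^\downarrow$ is absorbed at $0$ almost surely (by Proposition \ref{prop:qnubijection} the walk $W$ oscillates, and $h^\downarrow$ is the harmonic function giving the probability of hitting $0$ before $\mathbb{Z}_{<0}$), writing $T^\downarrow_0$ for the absorption time, $\{W^\downarrow_n>0\}=\{T^\downarrow_0>n\}$. Another application of the Doob transform, using $h^\downarrow(1)=1/2$, gives for every $k\ge 1$
\[
\mathbb{P}_1(T^\downarrow_0 = k) \;=\; \frac{1}{h^\downarrow(1)}\,\mathbb{P}_1\!\left(W_k = 0,\ W_j>0\ \forall j<k\right) \;=\; 2\,\mathbb{P}_1\!\left(W_k=0,\ W_j>0\ \forall j<k\right).
\]
At this point I invoke Kemperman's cycle lemma: for a random walk with i.i.d.\ integer increments started at $1$, the first-passage probability to $0$ at time $k$ satisfies
\[
\mathbb{P}_1\!\left(W_k=0,\ W_j>0\ \forall j<k\right) \;=\; \frac{1}{k}\,\mathbb{P}_1(W_k=0),
\]
which is the classical ballot-type identity obtained by exchangeability of cyclic shifts of the $k$ increments conditioned on their sum being $-1$. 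Summing over $k>n$ then yields the first identity in \eqref{eq:expinvP}.

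Finally, the second identity in \eqref{eq:expinvP} follows by summing the first one over $n\ge 0$ and switching the order of summation by Tonelli, since every $k\ge 1$ contributes exactly $k$ terms (for $n=0,1,\dots,k-1$), producing a factor $k$ that cancels the $1/k$. The only subtle step is the cycle lemma, which requires that the increments of $W$ take integer values (true by construction of $\nu$) and that $\mathbb{P}_1(W_k=0)>0$ is meaningful; both are automatic, so I do not anticipate any real obstacle beyond bookkeeping of the two Doob transforms.
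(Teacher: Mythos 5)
Your proposal is correct and follows essentially the same route as the paper: the same Doob-transform computation based on $h^\uparrow(\ell)=2\ell\,h^\downarrow(\ell)$ for \eqref{eq:Pupdowneq}, the same reduction of $\mathbb{P}_1(W^\downarrow_n>0)$ to first-passage probabilities via $h^\downarrow(1)=1/2$, and the same cycle-lemma identity $\mathbb{P}_1(W_i>0,\ 1\le i<k,\ W_k=0)=\frac{1}{k}\mathbb{P}_1(W_k=0)$ followed by interchanging the sums. The only cosmetic difference is the bijective proof of that identity (you use cyclic shifts and the first-minimum decomposition, which does apply here since the walk must land exactly one unit below its starting level; the paper uses a double time-reversal and the last-maximum decomposition from \cite{ACD05}), but both are instances of the same fluctuation identity.
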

\begin{proof}
The equality (\ref{eq:Pupdowneq}) follows directly from the definition of the $h^\uparrow$-transform and the exact forms of $h^\uparrow$ and $h^\downarrow$:
\begin{align}
\mathbb{E}_p\left[ \frac{1}{W^\uparrow_n}\right] &= \sum_{k=1}^\infty \frac{1}{k} \mathbb{P}_p(W^\uparrow_n = k) = \sum_{k=1}^\infty \frac{1}{k} \frac{h^\uparrow(k)}{h^\uparrow(p)}\mathbb{P}_p(W_i > 0 \text{ for }1\leq i<n,\, W_n = k)\nonumber \\
&= \frac{h^\downarrow(p)}{h^\uparrow(p)} \sum_{k=1}^\infty \frac{h^\uparrow(k)}{k h^\downarrow(k)} \mathbb{P}_p(W_n^\downarrow = k) = \frac{1}{p} \sum_{k=1}^\infty  \mathbb{P}_p(W_n^\downarrow = k)= \frac{1}{p}\mathbb{P}_p(W_n^\downarrow>0), \nonumber 
\end{align}
which gives the first claim. For the remaining statements it suffices to consider $p=1$.
Since $\inf\{ i : W_i^\downarrow = 0 \}$ is a.s. finite, we may identify
\begin{align}\label{eq:pdownid}
\mathbb{E}\left[ \frac{1}{P_n}\right]&=\mathbb{P}_1(W_n^\downarrow>0) = \sum_{j=n+1}^\infty \mathbb{P}_1(W^\downarrow_i > 0 \text{ for }1\leq i<j,\, W^\downarrow_j = 0)\nonumber\\
&= \frac{1}{h^\downarrow(1)}\sum_{j=n+1}^\infty \mathbb{P}_1(W_i > 0 \text{ for }1\leq i<j,\, W_j = 0).
\end{align}
We now use the cycle lemma to re-interpret the probabilities in the sum (see \cite[display before (1.7)]{ACD05}): For fixed $k>n\geq 0$ we can construct another sequence $(\tilde{W}_i)_{i\geq 0}$ by setting $\tilde{W}_i = 1+W_n-W_{n-i}$ for $i \leq n$, $\tilde{W}_{i} = W_n + W_k - W_{n+k-i}$ for $n<i<k$, and $\tilde{W}_i=W_i$ for $i \geq k$.
Then clearly $(\tilde{W}_i)_{i\geq 0}$ is equal in distribution to $(W_i)_{i\geq 0}$ while the event $W_i > 0$, $1\leq i<k$, $W_k = 0$, is equivalent to $\tilde{W}_k=0$ and the last maximum before time $k$ occurring at time $n$. Since the probability of the former event does not involve $n$ in its $W$-description, conditionally on $\tilde{W}_k=0$ the probability of the latter is equal for each $n\in\{0,1,\ldots,k-1\}$, and therefore \[\mathbb{P}_1(W_i > 0 \text{ for }1\leq i<k,\, W_k = 0) = \frac{1}{k} \prob_1(W_k=0).\]
Together with (\ref{eq:pdownid}) and $h^\downarrow(1)=1/2$ this implies the first equality in (\ref{eq:expinvP}), while the second one follows from interchanging the sums over $n$ and $k$.
\end{proof}

\subsection{Scaling limits for the perimeter}
\label{sec:perimeter}
We shall now study the scaling limit for the perimeter process. To avoid technical difficulties we exclude the case $a = 2$ which will be treated in a companion paper. Let $(S_{t})_{t \geq 0}$ be the $(a-1)$-stable Lévy process starting from $0$ with positivity parameter $ \rho = \mathbb{P}(S_{t} \geq 0)$ satisfying \[(a-1)(1-\rho)= \frac{1}{2}.\]
That is to say $(S_{t})_{t\geq 0}$ has no drift, no Brownian part and its Lévy measure has been normalized to 
 \begin{eqnarray} \label{eq:levykhintichine} \Pi( \mathrm{d}x) = \frac{ \mathrm{d}x}{x^{a}} \mathbf{1}_{x >0} + \frac{1}{\cos(\pi a)}\frac{ \mathrm{d}x}{|x|^{a}} \mathbf{1}_{x<0}.  \end{eqnarray}
It is then possible to define the process $(S^\uparrow_{t})_{t \geq 0}$ by conditioning $(S_{t})_{t \geq 0}$ to remain positive (see  \cite[Section 1.2]{CC08} for a rigorous definition). 

 \begin{proposition} \label{prop:scalingperimeter} If $a\in(3/2;2)\cup(2;5/2)$ we have the following convergence in distribution for the Skorokhod topology
\[ \left( \frac{W^\uparrow_{[nt]}}{n^{1/(a-1)}}\right)_{t \geq 0}  \xrightarrow[n\to\infty]{(d)}  \mathsf{p}_{ \mathbf{q}} \cdot  (S^\uparrow_{t})_{t \geq 0} \quad \mbox{ where} \quad \mathsf{p}_{ \mathbf{q}} = c^{1/(a-1)}.\]
 \end{proposition}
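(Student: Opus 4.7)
The plan is to first establish an unconditioned invariance principle for the random walk $(W_i)$ with step distribution $\nu$, and then lift it to $(W^\uparrow_i)$ using the invariance principle for random walks conditioned to stay positive of Caravenna--Chaumont \cite{CC08}.

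\textbf{Tails of $\nu$.} Substituting the hypothesis $q_k\sim c\kappa^{k-1}k^{-a}$ together with the asymptotic \eqref{eq:asymptowl} into the definition \eqref{eq:defnu}, one reads off immediately
$$\nu(k)\sim c\,k^{-a} \quad\text{and}\quad \nu(-k)\sim \frac{c}{\cos(\pi a)}k^{-a} \qquad (k\to+\infty).$$
Hence $\nu$ lies in the domain of attraction of a stable law of index $\alpha=a-1$ whose L\'evy measure is proportional to \eqref{eq:levykhintichine}; in particular the ratio $1:1/\cos(\pi a)$ of the tail coefficients fixes the skewness, which via Zolotarev's formula pins the positivity parameter down to the value $\rho$ satisfying $(a-1)(1-\rho)=\tfrac12$ (this is exactly the parametrization already used in \cite{LGM09,BBG11}).

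\textbf{Unconditioned invariance principle.} The natural normalizing constants $b_n$ are fixed by $n\,\nu([b_n,\infty))\to \Pi([1,\infty))=(a-1)^{-1}$, which with the tail asymptotics above gives $b_n = c^{1/(a-1)}n^{1/(a-1)}$. Proposition~\ref{prop:qnubijection} asserts that $W$ oscillates: in the dense case $\alpha<1$ no centering is required, and in the dilute case $\alpha>1$ the mean exists and oscillation forces it to vanish. A classical Donsker-type theorem for heavy-tailed i.i.d.\ walks then yields
$$\left(\frac{W_{[nt]}}{n^{1/(a-1)}}\right)_{t\geq 0}\ \xrightarrow[n\to\infty]{(d)}\ \mathsf{p}_\mathbf{q}\cdot (S_t)_{t\geq 0},\qquad \mathsf{p}_\mathbf{q}=c^{1/(a-1)},$$
in the Skorokhod $J_1$-topology.

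\textbf{Transfer to the conditioned walk.} Since $(W^\uparrow_i)_{i\geq 0}$ is precisely the Doob $h^\uparrow$-transform of $W$ starting from $1$, i.e.\ $W$ conditioned to stay in $\mathbb{Z}_{>0}$, and $S^\uparrow$ is the analogous conditioning of the $(a-1)$-stable L\'evy process $S$, the invariance principle of Caravenna and Chaumont \cite{CC08} directly upgrades the previous convergence to
$$\left(\frac{W^\uparrow_{[nt]}}{n^{1/(a-1)}}\right)_{t\geq 0}\ \xrightarrow[n\to\infty]{(d)}\ \mathsf{p}_\mathbf{q}\cdot (S^\uparrow_t)_{t\geq 0},$$
which is exactly the statement.

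\textbf{Main obstacle.} The conceptually hard step, namely the lifting of a functional limit theorem through a non-trivial Doob conditioning, is entirely packaged in \cite{CC08}. What remains is to verify the (mild) hypotheses of that theorem for $\nu$: non-arithmeticity (automatic given that $\nu$ charges both $\{k\}$ and $\{-k\}$ for all large $k$) and the regular-variation condition on the ladder-height tail (a consequence of the power-law tail of $\nu$, using classical fluctuation theory). The only real care needed on our side is Step~1, where one must read the positive and negative tail constants out of \eqref{eq:asymptowl} and verify that they assemble into precisely the L\'evy measure \eqref{eq:levykhintichine} and the constant $\mathsf{p}_\mathbf{q}=c^{1/(a-1)}$.
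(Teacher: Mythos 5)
Your proposal is correct and follows essentially the same route as the paper: read the two power-law tails of $\nu$ off \eqref{eq:asymptoticqk} and \eqref{eq:asymptowl} to get the unconditioned stable limit with scaling $c^{1/(a-1)}n^{1/(a-1)}$ and Lévy measure \eqref{eq:levykhintichine}, argue that no centering is needed ($\alpha<1$ in the dense case, zero mean forced by oscillation in the dilute case), and then invoke the Caravenna--Chaumont invariance principle to pass to the conditioned walk. The extra details you supply (the normalization via $n\,\nu([b_n,\infty))$ and the identification of $\rho$ from the tail ratio) are exactly the "straightforward calculation" the paper alludes to.
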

 \proof By the recent invariance principle \cite{CC08} it suffices to prove the convergence 
 in distribution 
\[ \frac{W_{n}}{n^{1/(a-1)}}  \xrightarrow[n\to\infty]{(d)}  \mathsf{p}_{ \mathbf{q}} \cdot  S_{1}.\]
First, it is easy to see from \eqref{eq:asymptoticqk} and \eqref{eq:asymptowl} that $\nu$ is a probability distribution in the domain of attraction of an $(a-1)$-stable law i.e.\,we have $a_{n}^{-1}W_{n}-b_{n}$ converges to an $(a-1)$-stable law for some scaling sequence $(a_{n})$ and centering sequence $(b_{n})$, see \cite[Theorem 8.3.1]{BGT89}. From the tail asymptotics of $\nu$ it follows that one can take $a_{n} = n^{1/(a-1)}$ and it remains to show that the centering sequences $(b_{n})$ can be set to $0$. This is always the case when $a\in (3/2,2)$ since no centering is needed; in the case when $a \in ( 2, 5/2)$ the fact that the random walk $(W_{i})_{i \geq 0}$ oscillates (Proposition \ref{prop:qnubijection}) implies that $\nu$ is centered and thus the centering sequence can be set to $0$ as well. In both cases, $a_{n}^{-1}W_{n}$ converges towards a strictly stable law whose limiting Lévy-Khintchine measure \eqref{eq:levykhintichine} is computed from the tails of $\nu$ by a straightforward calculation. 
\endproof 
\begin{remark} This scaling limit result should also hold true in the border case $a=2$ where the limit process is the symmetric Cauchy process without drift. Since we do not need this for our main results, which allude to either the dilute or the dense phase, we do not give the proof, which involves additional estimates to prove that the centering sequence can be set to $0$ in general. This can however be shown easily for the particular weight sequence $ q_{k} = 6^{1-k}/((2k-2)^2 -1)$ for $k > 1$ given in \cite[Eq. (80)]{Bud15} (see also Section \ref{sec:particular} below).
\end{remark}

Under the assumption of the above proposition, the local limit theorem \cite[Theorem 4.2.1]{IL71} implies that $\mathbb{P}_1(W_k=0) \sim C_0 k^{-1/(a-1)}$ as $k\to\infty$ for some $C_0>0$. Combining this with the first equation of \eqref{eq:expinvP} it follows that 
 \begin{eqnarray} \mathbb{E}\left[ \frac{1}{P_n}\right] \sim C n^{-1/(a-1)},  \label{eq:estim1/Pn}\end{eqnarray} for some constant $C>0$. See  \cite[Lemma 8]{CLGpeeling} for a similar estimate is the case of the face-peeling in random triangulations.

One can also deduce from the above proposition that any peeling exploration of $B_{\infty}$ will eventually discover the entire lattice (assuming further $a \ne 2$). The proof is mutatis mutandis the same as that of \cite[Corollary 7]{CLGpeeling} and reduces in the end to check that 
\[ \int_{1}^\infty \frac{ \mathrm{d}u}{(S_{u}^\uparrow)^{a-1}} = \infty \quad a.s.\]
 which can be proved using Jeulin's lemma.

\subsection{Scaling limits for the volume}
Our goal is now to study the scaling limit of the process $(V_{i})_{i \geq 0}$. We start with a result about the distribution of the size (number of vertices) of a $ \mathbf{q}$-Boltzmann planar map with a large perimeter, see  \cite[Proposition 6.4]{Ang03}, \cite[Proposition 9]{CLGpeeling} and \cite[Proposition 5]{Bud15} for similar statements in the case of more standard classes of planar maps. Recall that $a\in(3/2;5/2)$.\medskip

 Let $\xi_\bullet$ be a positive $1/(a-\frac12)$-stable random variable with Laplace transform
\begin{equation}
\expec [e^{-\lambda \xi_\bullet}] = \exp\left( - (\Gamma(a+1/2)\lambda)^{\frac{1}{a-1/2}}\right).
\end{equation}
Then $\expec [1/\xi_\bullet] = \int_{0}^\infty \mathrm{d}x \, \exp(-x^{1/(a-1/2)})/\Gamma(a+\frac12) = 1$ and we can define a random variable $\xi$ by biasing $\xi_{\bullet}$ by $x\to 1/x$, that is for any $f \geq 0$
\[ \mathbb{E}[f(\xi)] = \mathbb{E}\left[f( \xi_{\bullet}) \frac{1}{ \xi_{\bullet}}\right].\] 
Notice that $\xi$ has mean $\expec [\xi] = 1$. Recall that $| \map|$ denotes the number of vertices of a map $\map$.

\begin{proposition} \label{prop:volume} Suppose that $ \mathbf{q}$ is an admissible and critical weight sequence satisfying \eqref{eq:asymptoticqk}. Let $B^{(\ell)}$ be a $ \mathbf{q}$-Boltzmann planar map with root face degree $2\ell$ for $\ell \geq 1$.  Then we have 
 \begin{eqnarray} \mathbb{E} [| B^{(\ell)}|] \sim   \mathsf{b}_{ \mathbf{q}} \cdot {\ell^{a-1/2}} \quad \mbox{ as } \ell \to \infty \qquad \mbox{ where } \quad  \mathsf{b}_{ \mathbf{q}}=\frac{2\kappa\cos(\pi a)}{c\sqrt{\pi}}  \end{eqnarray}
and we have the convergence in distribution
\begin{equation}\label{eq:volumeconv}
\ell^{-a+\half} | B^{(\ell)}| \xrightarrow[\ell\to\infty]{\mathrm{(d)}} \mathsf{b}_{ \mathbf{q}}\cdot \xi.
\end{equation}

\end{proposition}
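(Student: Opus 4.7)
The plan is to follow the template of the analogous statements already available in the literature (Angel, Proposition 6.4; \cite{CLGpeeling}, Proposition 9; \cite{Bud15}, Proposition 5) and work via generating functions rather than the peeling process. The key device is the Bouttier--Di Francesco--Guitter encoding of pointed bipartite Boltzmann maps as labelled mobiles, which yields an implicit characterisation of $W^{(\ell)}$ in terms of two auxiliary power series satisfying fixed-point equations parametrised by the weight sequence.

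First I would form the generating series $G_\ell(z) := \sum_{n \geq 0} W_n^{(\ell)} z^n$, so that $G_\ell(1) = W^{(\ell)}$ and $G_\ell'(1) = \E[|B^{(\ell)}|]\, W^{(\ell)}$. Using the Euler relation for bipartite planar maps, the extra vertex weight can be absorbed into the face weights: one checks that $G_\ell(z) = z^{\ell+1}\, W^{(\ell)}(\tilde{\mathbf{q}}(z))$ where $\tilde{q}_k(z) := z^{k-1} q_k$ has tail $\tilde{q}_k(z) \sim c\,(z\kappa)^{k-1} k^{-a}$. For $z < 1$ this is a strictly subcritical admissible perturbation of $\mathbf{q}$, for which the BDG machinery is fully available. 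Thus the problem reduces to an analytic study of $z \mapsto W^{(\ell)}(\tilde{\mathbf{q}}(z))$ near the critical value $z=1$.

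Next I would perform the singular analysis near $z=1$. Writing the partition functions of the mobile generating functions attached to $\tilde{\mathbf{q}}(z)$ and expanding the fixed-point relation about its solution at $z=1$, the tail exponent $a$ of $\mathbf{q}$ forces the dominant singularity of the modified partition function $W^{(\ell)}(\tilde{\mathbf{q}}(z))$ to behave, uniformly in large $\ell$, like the $\ell$-th power of a factor $\kappa(z)/\kappa(1)$ where $\kappa(1) - \kappa(z) \asymp (1-z)^{1/(a-1/2)}$. Substituting $z = \exp(-\lambda/\ell^{a-1/2})$ and expanding yields
\begin{equation*}
\Phi_\ell(\lambda) := \E\!\left[\exp\!\left(-\lambda \ell^{-a+1/2}\,|B^{(\ell)}|\right)\right] = \frac{G_\ell(e^{-\lambda/\ell^{a-1/2}})}{W^{(\ell)}} \xrightarrow[\ell\to\infty]{} \psi(\lambda),
\end{equation*}
where $\psi$ matches the Laplace transform of $\mathsf{b}_{\mathbf{q}}\, \xi$ upon comparing the constants produced by the expansion with $\Gamma(a+1/2)$ from the definition of $\xi_\bullet$, giving the advertised $\mathsf{b}_{\mathbf{q}} = 2\kappa\cos(\pi a)/(c\sqrt{\pi})$. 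This establishes \eqref{eq:volumeconv}.

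Finally, the mean asymptotic $\E[|B^{(\ell)}|] \sim \mathsf{b}_{\mathbf{q}} \ell^{a-1/2}$ would be deduced from the distributional convergence by a uniform integrability argument: since $\xi$ has mean $1$, it suffices to bound $\E[\ell^{-a+1/2}|B^{(\ell)}|;\, |B^{(\ell)}|>M\ell^{a-1/2}]$ uniformly in $\ell$ and send $M \to \infty$, which is done by direct control on $G_\ell'(1)/W^{(\ell)}$ using the singular expansion of Step 2 (one has a matching bound already from the leading-order calculation itself). I expect the main obstacle to lie in Step 2: extracting the precise non-analytic behaviour $(1-z)^{1/(a-1/2)}$ of the implicit BDG functions uniformly in $\ell$ requires a careful perturbation analysis of the non-generic critical equation, together with a transfer theorem tailored to tails $q_k \sim c\kappa^{k-1}k^{-a}$; the key subtlety is that the naive ``analytic'' term $-\ell\,\partial_z \log\kappa(z)|_{z=1}$ is not well defined at the critical point, and it is precisely this breakdown that produces the fractional scaling exponent $a-\tfrac12$.
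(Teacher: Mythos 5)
Your overall strategy (Euler's relation to trade the vertex weight for modified face weights $\tilde q_k(z)=z^{k-1}q_k$, then a Laplace-transform and singularity analysis of $\kappa_g$ near $g=1$) is the same as the paper's, and you correctly identify where the difficulty lies --- but that is precisely where your argument has a genuine gap. You apply the substitution to the \emph{unpointed} partition function and assert that $W^{(\ell)}(\tilde\qseq(z))/W^{(\ell)}(\qseq)$ behaves like the $\ell$-th power of a factor $\kappa/\kappa_z$. This requires asymptotics of $W^{(\ell)}(\qseq_g)$ that are uniform as $g\uparrow 1$ and $\ell\to\infty$ simultaneously; the subexponential correction to $W^{(\ell)}(\qseq_g)$ (which is $\ell^{-a}$ at criticality by \eqref{eq:asymptowl} but is of a different nature for the subcritical sequences $\qseq_g$, $g<1$) does not cancel in the ratio in any obvious way, and you do not supply the uniform transfer theorem that would control it. The paper avoids this issue entirely by running the Laplace-transform computation on the \emph{pointed} partition function, for which one has the exact identity $W_\bullet^{(\ell)}(\qseq_g)=\kappa_g^{-\ell}h^\downarrow(\ell)$ (see \cite{MM07} and \cite{Bud15}): the $\ell$-dependence factors out exactly, so that $W_\bullet^{(\ell)}(g;\qseq)/W_\bullet^{(\ell)}(\qseq)=g\,(g\kappa/\kappa_g)^{\ell}$, and the only analytic input needed is the expansion $g\kappa/\kappa_g \sim 1-\bigl(\Gamma(a+\half)\mathsf{b}_\qseq(1-g)\bigr)^{1/(a-\half)}$. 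This gives the convergence of $\ell^{-a+1/2}|B_\bullet^{(\ell)}|$ to $\mathsf{b}_\qseq\,\xi_\bullet$, and \eqref{eq:volumeconv} is then recovered by un-biasing (a change of measure by $1/|B_\bullet^{(\ell)}|$ applied to test functions with compact support in $\R_+^*$), which uses only the first-moment asymptotics. You should either restructure the argument around the pointed object or prove the uniform singular expansion you allude to, which is substantially harder than the rest of the proof.

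Second, your treatment of the mean is both backwards and incomplete: you propose to deduce $\E[|B^{(\ell)}|]\sim\mathsf b_\qseq\,\ell^{a-1/2}$ from the distributional convergence via uniform integrability, but a uniform bound on the first moment $G_\ell'(1)/W^{(\ell)}$ does \emph{not} imply uniform integrability of $\ell^{-a+1/2}|B^{(\ell)}|$, so the limit of the means does not follow. The fix is immediate and is what the paper does: $G_\ell'(1)=W_\bullet^{(\ell)}=\kappa^{-\ell}2^{-2\ell}\binom{2\ell}{\ell}$ exactly, hence $\E[|B^{(\ell)}|]=W_\bullet^{(\ell)}/W^{(\ell)}$ and the asymptotics follow directly from \eqref{eq:asymptowl} and Stirling, with no appeal to the distributional limit. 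Note also that the de-biasing step described above \emph{uses} the mean asymptotics, so the correct logical order is: mean (exact formula), then pointed convergence (Laplace transform), then unpointed convergence (change of measure).
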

\begin{proof} Before entering the proof, let us introduce some convenient notation. A \emph{pointed} map $ \map_{\bullet}$ is a planar (rooted bipartite) map given with a distinguished vertex. We denote by $ \mathcal{M}^{(\ell)}_{\bullet}$ the set of all pointed finite planar maps of perimeter $2 \ell$ and define accordingly $W^{(\ell)}_{\bullet}$ as in \eqref{eq:defWl} after replacing $ \mathcal{M}^{(\ell)}$ by $ \mathcal{M}^{(\ell)}_{\bullet}$. With this notation in hand, it should be clear that 

\[\expec [| B^{(\ell)}|] =  \frac{W_\bullet^{(\ell)}}{W^{(\ell)}}.\]
It follows from \cite[Eq. (24)]{Bud15} that we have the exact expression $W_\bullet^{(\ell)}= \kappa^{-\ell} 2^{-2\ell}\binom{2\ell}{\ell}$. Combining this with \eqref{eq:asymptowl} we easily get the first statement of the proposition. To prove the second statement of the proposition one introduces $B^{(\ell)}_{\bullet}$, the pointed version of $ B^{(\ell)}$ whose law is given by $ w( \cdot \mid \cdot \in \mathcal{M}^{(\ell)}_{\bullet})$ and will first show that

\begin{equation}\label{eq:sizebiasedconv}
\ell^{-a+1/2}| B_\bullet^{(\ell)}| \xrightarrow[\ell\to\infty]{\mathrm{(d)}} \mathsf{b}_\qseq\, \xi_\bullet.
\end{equation}
This is sufficient to imply our claim, indeed if $ \phi : \mathbb{R}_{+}^*  \to \mathbb{R}_{+}$ is a bounded continuous function with compact support in $ \mathbb{R}_{+}^*$ we have 
 \begin{eqnarray*}
  \mathbb{E}\left[  \phi \left( \ell^{-a+1/2} |B^{(\ell)}| \right) \right]\!\!\! &=&   \mathbb{E}\left[  \phi \left( \ell^{-a+1/2} |B^{(\ell)}_{\bullet}| \right) / |B^{(\ell)}_{\bullet}| \right] / \mathbb{E}\left[ 1/|B^{(\ell)}_{\bullet}| \right] \\ 
  &=&  \mathbb{E}\left[  \phi \left( \ell^{-a+1/2} |B^{(\ell)}_{\bullet}| \right) / (\ell^{-a+1/2}|B^{(\ell)}_{\bullet}|) \right] \cdot  \mathbb{E}\left[ \ell^{-a+1/2}|B^{(\ell)}| \right]\\
  & \xrightarrow[\ell\to\infty]{}& \mathbb{E}[ \phi(\mathsf{b}_\qseq\, \xi_\bullet) / (\mathsf{b}_\qseq\, \xi_\bullet) ] \cdot \mathsf{b}_{ \mathbf{q}} = \mathbb{E}[ \phi(\mathsf{b}_\qseq\, \xi)]   \end{eqnarray*} where in the last line the convergence is obtained after remarking that $\phi(x)/x$ is bounded and continuous because $\phi$ has compact support in $ \mathbb{R}_{+}^*$. This indeed proves the desired convergence in distribution. 
  
  We now turn to proving \eqref{eq:sizebiasedconv} using Laplace transforms. In this part we highlight the dependence in $ \mathbf{q}$ since it is crucial in the calculation and write $W^{(\ell)}( \mathbf{q})$ for $W^{(\ell)}$, $w_{ \mathbf{q}}$ for $w$, etc. Recall that $|\map|$ denotes the number of vertices of a map $\map$ and let us introduce for $g\in[0,1]$ the generating function
\[
W^{(\ell)}_{\bullet}(g;\qseq) := \sum_{\map_\bullet\in\maps_\bullet^{(\ell)}} w_\qseq(\map_\bullet) \, g^{|\map_\bullet|},
\]
such that $W^{(\ell)}_{\bullet}(g;\qseq)$ is strictly increasing on $g\in[0,1]$ and $W^{(\ell)}_{\bullet}(1;\qseq) = W^{(\ell)}_\bullet(\qseq) <\infty$. With this notation we have for all $\lambda >0$
 \begin{eqnarray} \label{eq:laplacetransform}\mathbb{E}[\exp(-\lambda |B^{(\ell)}_{\bullet}|)] = \frac{W_{\bullet}^{(\ell)}(e^{-\lambda}; \mathbf{q})}{W_{\bullet}^{(\ell)}(\mathbf{q})}.  \end{eqnarray}
Using Euler's formula we can rewrite this as $W^{(\ell)}_{\bullet}(g;\qseq) = g^{1+\ell} W_\bullet^{(\ell)}(\qseq_g)$ where $\qseq_g$ is the weight sequence determined by $(q_g)_k := g^{k-1} q_k$ for $k\geq 1$.
Since $\qseq_g$ is necessarily an admissible weight sequence we know that $W_\bullet^{(\ell)}(\qseq_g) = \kappa_g^{-\ell} h^\downarrow(\ell)$ for some $\kappa_g>0$, where $h^\downarrow$ is defined in \eqref{eq:defhdown}. According to \cite{MM07} we have $\kappa_g = 1/(4\bar{x})$ where $\bar{x}$ is the unique positive solution to $f_{\qseq_g}(\bar{x}) = 1 - \frac{1}{\bar{x}}$ with 
\begin{equation*}
f_{\qseq}(x) := \sum_{k=1}^\infty x^{k-1} \binom{2k-1}{k} q_k. 
\end{equation*} 
Since $f_{\qseq_g}(\bar{x}) = f_\qseq(g\bar{x})$, this is equivalent to $\kappa_g = g/(4x)$ with $x\in (0, 1/(4\kappa) )$ the unique positive solution to $f_\qseq(x)=1-\frac{g}{x}$, or better $\bar{f}_\qseq(x) = g$ with $\bar{f}_\qseq(x) := x(1-f_\qseq(x))$.

Our weight sequence $\qseq$ is chosen exactly such that $\bar{f}_\qseq( 1/(4\kappa)) = 1$ and $\bar{f}_\qseq '(1/(4\kappa)) = 0$.
Since $q_k \sim c \kappa^{k-1} k^{-a}$ as $k\to\infty$ we find that
\begin{equation*}
\bar{f}_\qseq(x) \sim 1 - \frac{c\,\Gamma(\half-a)}{2\kappa\sqrt{\pi}}(1-4\kappa x)^{a-\half}=1 - \frac{1}{\Gamma(a+\half)\mathsf{b}_\qseq}(1-4\kappa x)^{a-\half}\quad\quad \text{as }x\nearrow \frac{1}{4\kappa}.
\end{equation*}
It follows that 
\begin{equation} \label{eq:estimkappag}
\frac{g\kappa}{\kappa_g} = 4\kappa x \sim 1-\left(\Gamma(a+\half)\mathsf{b}_\qseq(1-g)\right)^{1/(a-\half)}\quad\quad\text{ as }g\nearrow 1.
\end{equation}
Using that $W_\bullet^{(\ell)}(g;\qseq)/W_\bullet^{(\ell)}(\qseq) = g \left(g\kappa/\kappa_g\right)^{\ell}$ and setting $g = \exp(-\lambda \ell^{\half-a})$ with $\lambda>0$ we find 
\begin{eqnarray}
 \lim_{\ell \to \infty}\mathbb{E}\left[\exp\left(-\lambda \ell^{ \half-a} |B^{(\ell)}_{\bullet}|\right)\right] &\underset{ \eqref{eq:laplacetransform}}{=}& \lim_{\ell \to \infty} \frac{ W_\bullet^{(\ell)}(\exp(-\lambda \ell^{\half-a});\qseq)}{W_\bullet^{(\ell)}(\qseq)}\nonumber\\ & \underset{\eqref{eq:estimkappag}}{=}& \lim_{\ell\to\infty} \left( 1-\frac{1}{\ell}\left(\Gamma(a+\half)\mathsf{b}_\qseq\lambda\right)^{1/\left(a-\half\right)}\right)^{\ell}\nonumber\\  &=& \exp\left( - \left(\Gamma(a+\half)\mathsf{b}_\qseq\lambda\right)^{1/\left(a-\half\right)}\right)\nonumber\\
 & =& \expec [\exp(- \lambda\mathsf{b}_\qseq \xi_\bullet)]\nonumber
\end{eqnarray}
thereby proving the convergence (\ref{eq:sizebiasedconv}).
\end{proof}

\begin{remark} In this work, the number of vertices of the primal map $B_{\infty}$ (or, equivalently, the number of faces of $B_{\infty}^\dagger$) has been taken as the notion of volume. Actually, all the results on the volume could be translated in terms of number of faces of $B_{\infty}$ (or vertices of $B_{\infty}^\dagger$) up to changing the constant $ \mathsf{b}_{ \mathbf{q}}$. More precisely, the proposition above and its consequences in the paper hold true if one uses $\|\map\|$, the number of faces of the map $\map$, instead of $|\map|$ and a new constant 
\[ \mathsf{b}_{ \mathbf{q}}^F = \left(\frac{1}{4\kappa}-1\right)\mathsf{b}_{\mathbf{q}} = (1-4\kappa)\frac{\cos(\pi a)}{2c\sqrt{\pi}}\]
instead of $\mathsf{b}_{\mathbf{q}}$.
This can be proved either by generating function techniques as above (see \cite{BudOn}) or by probabilistic representation of the volume using the Bouttier-Di Francesco-Guitter encoding (see \cite{CCMOn}).
\end{remark}
We are now able to introduce the scaling limit for the perimeter and volume process during a peeling exploration of $ B_{\infty}$. Recall from Section \ref{sec:perimeter} the definition of $ (S^\uparrow_{t})_{t \geq 0}$ as the $(a-1)$-stable Lévy process conditioned to survive. We let $\xi_{1}, \xi_{2}, \ldots $ be a sequence of  independent real random variables distributed as the variable $\xi$ of Proposition \ref{prop:volume}. We assume that this sequence is independent of the process $(S_{t}^\uparrow)_{t \geq 0}$ and for every $t \geq 0$ we set   \begin{eqnarray} \label{eq:defzt} Z_{t} = \sum_{t_{i} \leq t} \xi_{i} \cdot |\Delta S_{t_{i}}^\uparrow|^{a- \frac{1}{2}} \mathbf{1}_{ \Delta S^\uparrow_{t_{i}} < 0},  \end{eqnarray} where $t_{1}, t_{2}, \ldots$ is a measurable enumeration of the jump times of $S^\uparrow$. Since $ x \mapsto x^{a- \frac{1}{2}} \mathbf{1}_{ x <0}$ integrates the Lévy measure of $(S_{t})_{t \geq 0}$ in the neighborhood of $0$ it is easy to check  that $(Z_{t})_{t \geq 0}$ is a.s.~finite for all $t\geq 0$. The analog of \cite[Theorem 1]{CLGpeeling} and \cite[Theorem 3]{Bud15} is

\begin{theorem} \label{thm:scalingperimeter+volume}Let $(P_{i},V_{i})_{i \geq 0}$ respectively be the half-perimeter and the number of inner vertices in a peeling exploration of $ B_{\infty}$. For $a \ne 2$ we have the following convergence in distribution in the sense of Skorokhod
\[ \left( \frac{P_{[nt]}}{n^{\frac{1}{a-1}}}, \frac{V_{[nt]}}{n^{ \frac{a-1/2}{a-1}}} \right)_{t \geq 0}  \xrightarrow[n\to\infty]{(d)}  \left( \mathsf{p}_{ \mathbf{q}} \cdot S^\uparrow_{t},  \mathsf{v}_{ \mathbf{q}} \cdot  Z_{t}\right)_{t \geq 0},\]
where  $\mathsf{v}_{ \mathbf{q}} = \mathsf{b_\qseq} (\mathsf{p}_{\mathbf{q}})^{a-1/2}$ and $ \mathsf{p}_{ \mathbf{q}}$ and $ \mathsf{b}_{ \mathbf{q}}$ are as in Propositions \ref{prop:scalingperimeter} and \ref{prop:volume}.
\end{theorem}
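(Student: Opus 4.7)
The perimeter convergence is given by Proposition \ref{prop:scalingperimeter}, so only the joint convergence with the volume requires proof. By Theorem \ref{thm:budd15}, conditionally on $(P_i)_{i\geq 0}$ the increments $\Delta V_i = V_{i+1}-V_i$ are independent, and $\Delta V_i$ is distributed as the number of vertices $|B^{(k_i)}|$ of a $\mathbf{q}$-Boltzmann map of perimeter $2k_i$, where $k_i := \max(0,-\Delta P_i - 1)$. In particular, only the negative jumps of $(P_i)$ feed the volume. The plan is to apply Skorokhod's representation theorem to the convergence $n^{-1/(a-1)}P_{[n\cdot]}\to\mathsf{p}_{\mathbf{q}}S^\uparrow$ and then identify, jump by jump, the contribution of each (rescaled) negative jump of $P$ to the volume.

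Fix $\epsilon>0$ and split $V_{[nt]} = V^{+}_{[nt]}+V^{-}_{[nt]}$ according to whether $k_i>\epsilon n^{1/(a-1)}$ or not. Only a tight, finite number of large jumps falls on any compact time window. For each such index, Proposition \ref{prop:volume} gives $k_i^{-(a-1/2)}|B^{(k_i)}|\to\mathsf{b}_{\mathbf{q}}\xi_i$ in distribution, where the $\xi_i$ are i.i.d.\ copies of $\xi$, mutually independent by the conditional independence of the Boltzmann maps. Summing these over large jumps and using $\mathsf{v}_{\mathbf{q}}=\mathsf{b}_{\mathbf{q}}\mathsf{p}_{\mathbf{q}}^{a-1/2}$ yields
$$n^{-(a-1/2)/(a-1)}V^{+}_{[nt]}\xrightarrow[n\to\infty]{(d)}\mathsf{v}_{\mathbf{q}}Z^{(\epsilon)}_t,$$
where $Z^{(\epsilon)}$ is the analogue of \eqref{eq:defzt} retaining only jumps of $S^\uparrow$ of absolute size larger than $\epsilon/\mathsf{p}_{\mathbf{q}}$. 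For the small-jump remainder, the first-moment estimate $\mathbb{E}[|B^{(k)}|]\sim \mathsf{b}_{\mathbf{q}}k^{a-1/2}$ together with the tail $\nu(-k)\sim Ck^{-a}$ yields, after a short exponent computation, $\mathbb{E}\bigl[n^{-(a-1/2)/(a-1)}V^{-}_{[nt]}\bigr] = O(\sqrt{\epsilon})$ uniformly in $n$; by Markov's inequality the small-jump contribution is therefore negligible in probability as $\epsilon\to 0$.

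Since $|x|^{a-1/2}$ is integrable against the L\'evy measure $\Pi$ near the origin, $Z^{(\epsilon)}_t\to Z_t$ almost surely as $\epsilon\to 0$, which combined with an $\epsilon$-diagonalization gives the claimed joint convergence in the Skorokhod topology (the pure-jump nature of the limit and the fact that the map sending a c\`adl\`ag path to its jumps is continuous at $S^\uparrow$ are what allow the jumpwise identification to pass to the limit). The main obstacle is precisely the small-jump control: because $\xi$ generally has infinite variance, second-moment arguments are unavailable, and one must rely on the finely balanced cancellation between the typical bubble size $\sim k^{a-1/2}$ and the jump tail $\sim k^{-a}$ to force the truncation error to zero. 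This is the same overall architecture as in \cite[Theorem 1]{CLGpeeling}, transposed from the face-peeling to the edge-peeling framework.
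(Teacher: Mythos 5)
Your proposal is correct and follows essentially the same route as the paper: decompose $V$ into contributions from jumps above and below the threshold $\varepsilon n^{1/(a-1)}$, identify the large-jump part with $Z^{>\varepsilon}$ via Proposition \ref{prop:volume}, control the small-jump remainder by the first-moment bound $O(\sqrt{\varepsilon})$ obtained from \eqref{eq:bounddeltaperi} together with $\mathbb{E}[|B^{(k)}|]\sim \mathsf{b}_{\mathbf{q}}k^{a-1/2}$, and let $\varepsilon\to0$. The exponent bookkeeping in your truncation estimate matches the paper's \eqref{petit-saut}.
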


\begin{figure}[t]
 \centering
 \includegraphics[width=\linewidth]{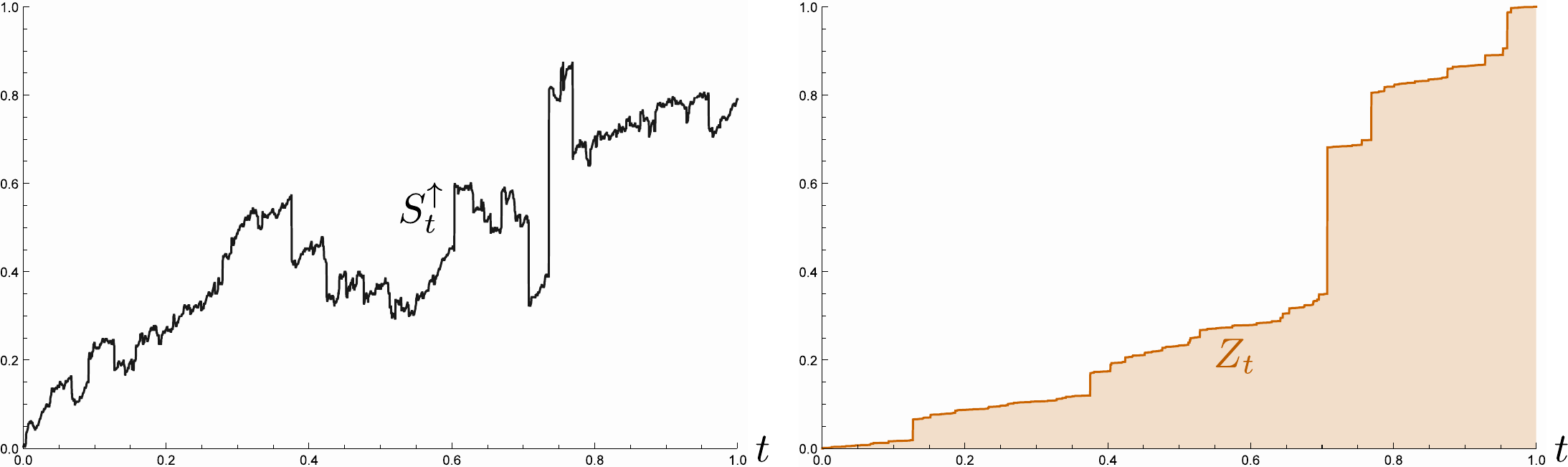}
 \caption{Simulation of the processes $ S^\uparrow$ and $Z$ when $a=2.3$.}
 \end{figure}
\proof The proof is the same as that of Theorem 1 of \cite{CLGpeeling} with the appropriate updates. The convergence of the first component is given
by  Proposition \ref{prop:scalingperimeter}, it remains to
study the   conditional distribution of the second component given 
the first one. Recall that the number of inner vertices in $ \mathfrak{e}_{n}$ can be written as 
\[V_n= \sum_{ i = 0}^{n-1}\mathcal{X}^{(i)}_{P_{i}-P_{i+1} -1},\] where $ \mathcal{X}^{(i)}_{j}$ for $i \geq 0$ and $j \in \mathbb{Z}$ are independent random variables such that $ \mathcal{X}^{(i)}_{j}$ has the same distribution as the number of vertices inside a $ \mathbf{q}$-Boltzmann random map with perimeter $2j$ if $j \geq 0$ and is $0$ otherwise. To simplify notation we use the notation $\wt \Delta P_{i} = P_{i}- P_{i+1}-1$ below. Fix $\ve >0$ and set for $k \in \{1,2,... , n\}$\begin{equation}
\label{holes-filled2}
 V_k^{> \ve}=\sum_{i=0}^{k-1}  \mathcal{X}_{\wt \Delta P_{i}}^{(i)}\mathbf{1}_{ \wt \Delta P_{i} > \ve n^{1/(a-1)}}
\;,\quad  V_k^{\leq \ve}=\sum_{i=0}^{k-1} \mathcal{X}_{\wt \Delta P_{i}}^{(i)}\mathbf{1}_{ 0 \leq \wt \Delta P_{i} \leq \ve n^{1/(a-1)}}.
\end{equation}
It is then easy to combine Proposition \ref{prop:volume} and \eqref{eq:bounddeltaperi} in order to deduce (see the proof of  \cite[Theorem 1]{CLGpeeling} for the detailed calculation) that 
\begin{equation}
\label{petit-saut}
n^{-(a-1/2)/(a-1)}{\E}[V_{n}^{ \leq \varepsilon} ] \leq C\,\sqrt{\ve},
\end{equation}
for some $C>0$ independent of $n$ and $ \varepsilon$.

On the other hand, by Proposition \ref{prop:scalingperimeter} and the fact that $(S^\uparrow_{t})_{t \geq 0}$ does not have jumps of size exactly $ - \varepsilon$ almost surely, it follows that jointly with the convergence of the first component in the theorem we have the following convergence in distribution for the Skorokhod topology (see \cite[Proof of Theorem 6]{CLGpeeling} for details) 
  \begin{eqnarray} \label{eq:convverszeps} \left(n^{ -\frac{a-1/2}{a-1}} \cdot V^{> \varepsilon}_{[nt]} \right)_{t \geq 0}  \xrightarrow[n\to\infty]{(d)}  \left(  \mathsf{v}_{ \mathbf{q}} \cdot  Z^{ > \varepsilon}_{t}\right)_{t \geq 0},  \end{eqnarray} where the process $(Z^{ > \varepsilon}_{t})$ is defined as $(Z_{t})$ but only keeping the negative jumps of $ (S^{\uparrow}_{t})$ of absolute size larger than $   \varepsilon/ \mathsf{p}_{ \mathbf{q}}$. Then, it is easy to verify that, for every $\delta>0$ and any $t_{0}>0$ fixed we have 
\[ \mathbb{P}\Big(\sup_{0\leq t\leq t_{0}} |Z_t-Z^{>\ve}_t| >\delta\Big) 
\xrightarrow[\ve\to 0]{} 0.\] We can use the last display, together with \eqref{eq:convverszeps} and \eqref{petit-saut} to deduce the desired convergence in distribution. 
\endproof
\section{The dilute phase}
\begin{center} \hrulefill  \quad \textit{In this section we suppose that $a \in \left(2, \frac52\right)$ } \hrulefill  \end{center}
In this section, we study the geometry of $B_{\infty}^\dagger$ both for the dual graph distance $ \mathrm{d}_{ \mathrm{gr}}^\dagger$ and the first-passage percolation distance $ \mathrm{d_{fpp}}$ in the dilute phase $a \in (2, \frac52)$. Our main results are Theorem \ref{thm:scalinglayers} and Proposition \ref{prop:scalingeden}. The proofs in this section are similar to those of \cite{CLGpeeling} and only the main differences are highlighted. The key idea is to relate the growth of the distances along the peeling process to the perimeter process via a time change. We start with the Eden model which is much simpler.
\subsection{Eden model}
\begin{proposition}[Distances in the uniform peeling]  \label{prop:scalingeden} Let $(P_{i},V_{i}, \tau_{i})_{i \geq 0}$ respectively be the half-perimeter, the number of inner vertices and the times of jumps of the exploration process in the uniform peeling of $B_{\infty}$ as described in Section \ref{sec:edenalgo}. Then we have the following convergence in distribution for the Skorokhod topology 
 \begin{eqnarray*} \left( \frac{P_{[nt]}}{ n^{\frac{1}{a-1}}}, \frac{V_{[nt]}}{n^{ \frac{a- 1/2}{a-1}}}, \frac{ \tau_{[nt]}}{n^{\frac{a-2}{a-1}}}\right)_{t \geq 0}  &\xrightarrow[n\to\infty]{(d)}&  \left( \mathsf{p}_{  \mathbf{q}} \cdot S_{t}^\uparrow, \mathsf{v}_{ \mathbf{q}} \cdot Z_{t}, \frac{1}{ 2\mathsf{p}_{ \mathbf{q}}} \cdot \int_{0}^t \frac{ \mathrm{d}u}{S_{u}^\uparrow} \right)_{t \geq 0}.  \end{eqnarray*}
\end{proposition}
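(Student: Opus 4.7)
The plan is to leverage Theorem \ref{thm:scalingperimeter+volume} (which already gives joint convergence of $(P,V)$) and to add the third coordinate via the key identity from Proposition \ref{prop:peelingeden=uniform}: conditionally on the peeling exploration, the increments $\tau_{i+1}-\tau_i$ are independent exponentials with parameter $2P_i$. Equivalently, we may write
$$\tau_n = \sum_{i=0}^{n-1} \frac{E_i}{2P_i},$$
where $(E_i)_{i \geq 0}$ is an i.i.d.\ sequence of $\mathcal{E}(1)$ random variables \emph{independent} of $(P_i,V_i)_{i\geq 0}$. Following the strategy of \cite{CLGpeeling}, I would split this sum into its conditional mean $\mu_n := \sum_{i<n}1/(2P_i)$ and the martingale $M_n := \tau_n - \mu_n$, and treat each term separately. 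Independence of $(E_i)$ from the rest of the randomness automatically promotes the final convergence to a joint convergence with the first two coordinates.

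For the fluctuation $M_n$, its conditional bracket is $\sum_{i < n} 1/(4P_i^2) \leq \sum_{i<n} 1/(4 P_i)$, and \eqref{eq:estim1/Pn} yields $\E\bigl[\sum_{i=0}^{[nT]} 1/P_i\bigr] \leq C_T\, n^{(a-2)/(a-1)}$. Doob's $L^2$ inequality then gives $\E\bigl[\sup_{k\leq [nT]} M_k^2\bigr] = O(n^{(a-2)/(a-1)})$, so after division by $n^{(a-2)/(a-1)}$ the supremum of $|M_{[nt]}|$ vanishes in probability (the true fluctuation scale being only $n^{(a-2)/(2(a-1))}$, strictly smaller). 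For the main term $\mu_{[nt]}$, I would use a Skorokhod representation of Theorem \ref{thm:scalingperimeter+volume} to work on a probability space where $n^{-1/(a-1)}P_{[n\cdot]}$ converges almost surely to $\mathsf{p}_\qseq S^\uparrow$ in the $J_1$ topology. For every fixed $\delta \in (0,t)$, a standard Riemann-sum argument (using that $S^\uparrow$ is bounded away from $0$ on $[\delta, t]$ and that its countably many jumps are avoided by Skorokhod convergence) then gives
$$\frac{1}{n^{(a-2)/(a-1)}} \sum_{i = [n\delta]}^{[nt]} \frac{1}{2P_i} \xrightarrow[n\to\infty]{} \frac{1}{2\mathsf{p}_\qseq} \int_\delta^t \frac{du}{S_u^\uparrow}.$$

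The delicate point --- and the main obstacle --- is the behavior near $0$: since $S^\uparrow$ starts from $0$, the target integral is improper at the origin, and one cannot trivially chop off a negligible initial slice from the discrete sum. On the discrete side, \eqref{eq:estim1/Pn} gives the uniform-in-$n$ bound
$$n^{-(a-2)/(a-1)}\,\E\Big[\sum_{i=0}^{[n\delta]} \tfrac{1}{2P_i}\Big] \leq C\,\delta^{(a-2)/(a-1)} \xrightarrow[\delta \to 0]{} 0.$$
On the continuous side, the self-similarity of $S^\uparrow$ near $0$ with index $1/(a-1) < 1$ (valid because $a > 2$) makes $\int_0^\delta du/S_u^\uparrow$ a.s.\ finite and vanishing in probability as $\delta \to 0$; this can be made quantitative by a Jeulin-type lemma exactly as the one invoked at the end of Section \ref{sec:perimeter}. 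A standard $\varepsilon/\delta$ combination of these two uniform bounds with the Riemann-sum convergence on $[\delta, t]$ closes the argument, and joint convergence with the perimeter and volume processes is automatic from the Skorokhod representation together with the independence of $(E_i)$ from $(P_i, V_i)$.
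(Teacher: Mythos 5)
Your proposal is correct and follows essentially the same route as the paper: both start from the representation $\tau_n=\sum_{i<n}\mathbf{e}_i/(2P_i)$ from Proposition \ref{prop:peelingeden=uniform}, prove convergence of $n^{-(a-2)/(a-1)}(\tau_{[nt]}-\tau_{[n\varepsilon]})$ on $[\varepsilon,t]$ via a law of large numbers combined with the perimeter scaling limit, and then remove the initial slice using the bound $\mathbb{E}[\tau_{[n\varepsilon]}]\leq C(\varepsilon n)^{(a-2)/(a-1)}$ from \eqref{eq:estim1/Pn} together with the a.s.\ integrability of $(S^\uparrow_u)^{-1}$ at $0^+$. Your explicit martingale decomposition of the fluctuations and your appeal to self-similarity for the integrability at $0$ are minor variants of details the paper delegates to \cite{CLGpeeling} or derives via Fatou's lemma, respectively.
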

The above result can easily be translated in geometric terms. Recall the notation $  \mathrm{Ball}_{r}^{ \mathrm{fpp}}( B_{\infty})$ from Section	\ref{sec:edenalgo}. We denote by $ | \overline{\mathrm{Ball}}_{r}^{ \mathrm{fpp}}( B_{\infty})|$ and $|\partial \overline{\mathrm{Ball}}_{r}^{ \mathrm{fpp}}( B_{\infty})|$ respectively the size (number of inner vertices) and the half-perimeter of the unique hole of $ \overline{\mathrm{Ball}}_{r}^{ \mathrm{fpp}}( B_{\infty})$. Then from the geometric interpretation of Section \ref{sec:edenalgo} and the above result we have the following convergence in distribution in the sense of Skorokhod
 \begin{eqnarray} \label{eq:interprescalingeden}
 \left( \frac{ | \partial  \overline{\mathrm{Ball}}_{[tn]}^{ \mathrm{fpp}}(B_{\infty})|}{n^{ \frac{1}{a-2}}}, \frac{| \overline{\mathrm{Ball}}_{[tn]}^{ \mathrm{fpp}}(B_{\infty})|}{n^{ \frac{a-1/2}{a-2}}}\right)_{t \geq 0} &\xrightarrow[n\to\infty]{(d)}& \left( \mathsf{p}_{ \mathbf{q}} \cdot S^\uparrow_{\vartheta_{2 \mathsf{p}_{ \mathbf{q}}t}},  \mathsf{v}_{ \mathbf{q}} \cdot Z_{\vartheta_{2 \mathsf{p}_{ \mathbf{q}}t}}\right)_{t \geq 0},
 \end{eqnarray}
 where for $t\geq0$ we have put $\vartheta_{t} = \inf\{ s \geq 0 : \int_{0}^s \frac{ \mathrm{d}u}{ S^\uparrow_{u}} \geq t\}$. In the work \cite{CLGpeeling}, the process $S^\uparrow_{\vartheta_{t}}$ (called the first Lamperti transform of $S^\uparrow$) could be interpreted as a reverse branching process, but this is not the case anymore here since our Lévy processes now have positive and negative jumps (Lamperti representation theorem links branching processes to Lévy processes with only negative jumps). 
\proof[Proof of Proposition \ref{prop:scalingeden}] Here also, the proof is the same as that of Theorem 4 of \cite{CLGpeeling} with the appropriate updates. The joint convergence of the first two components is given by Theorem  \ref{thm:scalingperimeter+volume}. We now prove the convergence of the
third component jointly
with the first two. Recall from Proposition \ref{prop:peelingeden=uniform} that conditionally on $(P_{i},V_{i})_{i \geq 0}$ we have
\[ \tau_{n} = \sum_{i=0}^{n-1} \frac{ \mathbf{e}_{i}}{2P_{i}},\] where $ \mathbf{e}_{ i}$ are independent exponential variables of expectation $1$. Using Proposition \ref{prop:scalingperimeter} and an easy law of large number we deduce that for every $ \varepsilon>0$ we have the following convergence 
 \begin{eqnarray} \left(n^{- \frac{a-2}{a-1}} \big(\tau_{[nt]}-\tau_{[n \varepsilon]}\big)\right)_{ t \geq  \varepsilon} \xrightarrow[n\to\infty]{(d)} \left(\frac{1}{ 2\mathsf{p}_{ \mathbf{q}}} \int_{ \varepsilon}^t \frac{ \mathrm{d}u}{S^\uparrow_{u}}\right)_{ t \geq \varepsilon}, \label{eq:convinteps}  \end{eqnarray} and this convergence holds jointly with the first two components considered in the proposition (see \cite[Proof of Theorem 4]{CLGpeeling} for the details of the calculation needed). Hence, to finish the proof of the proposition, it suffices to see that for any $ \delta>0$ we have 
\[ \lim_{ \varepsilon \to 0} \sup_{ n \geq 1} \mathbb{P}\left( n^{ -\frac{a-2}{a-1}} \cdot \tau_{[n \varepsilon]} > \delta \right) =0 \quad \mbox{ and } \quad \lim_{ \varepsilon \to 0} \mathbb{P}\left(\int_{0}^ \varepsilon \frac{ \mathrm{d}u}{S^\uparrow_{u}} >  \delta \right)=0.\]
For the first limit, we use \eqref{eq:estim1/Pn} to get 
\[ \mathbb{E}[\tau_{[n \varepsilon]}] = \left.\mathbb{E}\left[ \mathbb{E}\left[ \sum_{i=0}^{[ n \varepsilon]} \frac{ \mathbf{e}_{i}}{2P_{i}}\  \right| (P_{i})_{i \geq 0}\right]\right] = \sum_{i=0}^{[ n \varepsilon]}  \mathbb{E}\left[\frac{1}{2P_{i}}\right] \underset{ \eqref{eq:estim1/Pn}}{\leq} C ( \varepsilon n)^{\frac{a-2}{a-1}},\] for some constant $C>0$. The desired result follows from an application of Markov's inequality. The second statement just follows from the fact that $(S^\uparrow_{t})^{-1}$ is almost surely integrable around $0^+$ since $a>2$. One cheap way to see this is to take expectations in \eqref{eq:convinteps} and using Fatou's lemma together with the last calculation to get 
\[ \frac{1}{2 \mathsf{p}_{ \mathbf{q}}} \mathbb{E}\left[ \int_{ \varepsilon}^1  \frac{ \mathrm{d}u}{S^\uparrow_{u}} \right] \leq C(1 + \varepsilon^{ \frac{a-2}{a-1}}).\]
Sending $ \varepsilon \to 0$ we deduce that indeed $(S^\uparrow_{t})^{-1}$ is almost surely integrable around $0$.
 \endproof 
\subsection{Dual graph distance}

\label{sec:dilutelayers}

\begin{theorem}[Distances in the peeling by layers]  \label{thm:scalinglayers} Let $(P_{i},V_{i}, H_{i})_{i \geq 0}$ respectively be the half-perimeter, the number of inner vertices and the minimal height of a face adjacent to the hole of $ \mathfrak{e}_{i}$ in the peeling of $B_{\infty}$ using algorithm $ \mathcal{L}^\dagger$. Then we have the following convergence in distribution for the Skorokhod topology 
 \begin{eqnarray*} \left( \frac{P_{[nt]}}{ n^{\frac{1}{a-1}}}, \frac{V_{[nt]}}{n^{ \frac{a- 1/2}{a-1}}}, \frac{ H_{[nt]}}{n^{\frac{a-2}{a-1}}}\right)_{t \geq 0}  &\xrightarrow[n\to\infty]{(d)}&  \left( \mathsf{p}_{  \mathbf{q}} \cdot S_{t}^\uparrow, \mathsf{v}_{ \mathbf{q}} \cdot Z_{t},  \mathsf{h}_{ \mathbf{q}} \cdot \int_{0}^t \frac{ \mathrm{d}u}{S_{u}^\uparrow} \right)_{t \geq 0},  \end{eqnarray*}
 where $\mathsf{h}_{ \mathbf{q}} = \mathsf{a}_{ \mathbf{q}}/(2\mathsf{p}_{ \mathbf{q}})$ and $ \mathsf{a}_{ \mathbf{q}}$ is defined below by \eqref{analoProp13}.
\end{theorem}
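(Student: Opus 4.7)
The joint convergence of $(P_{[nt]},V_{[nt]})$ is already provided by Theorem \ref{thm:scalingperimeter+volume}, so the task reduces to identifying the scaling limit of the height process $H_n$ jointly with $(P_n,V_n)$. The strategy parallels the proof of Proposition \ref{prop:scalingeden} for the Eden model, with a law of large numbers replacing the explicit representation of the jump times by sums of exponentials. Indeed, hypothesis $(H)$ guarantees that the boundary of the unique hole of $\mathfrak{e}_i$ decomposes at every step into an arc of edges at height $H_i$ followed by a complementary arc at height $H_i+1$, and $H_i$ increases by exactly one at the (random) step when the former arc has been entirely consumed. Between two such layer-completion events the number of peeling steps is typically of order of the perimeter, so one expects, up to asymptotically negligible errors,
$$H_n \;\approx\; \mathsf{a}_{\mathbf{q}}\,\sum_{i=0}^{n-1}\frac{1}{2P_i}$$
for a deterministic constant $\mathsf{a}_{\mathbf{q}}>0$; this is the defining relation \eqref{analoProp13} referred to in the statement.

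To establish such a relation, I would enrich the exploration with an auxiliary variable $L_i$ counting the edges of the boundary of $\mathfrak{e}_i$ still lying at height $H_i$. The explicit one-step transitions \eqref{eq:qp}--\eqref{eq:qp2}, together with the fact that $P_i\to\infty$ almost surely by \eqref{eq:tooltransient}, should show that the normalized pair $(P_i,L_i/P_i)$ evolves in such a way that $L_i/P_i$ possesses an asymptotic invariant law on $[0,1]$; the constant $\mathsf{a}_{\mathbf{q}}$ then arises as the mean ``rate of layer completions per unit of $1/(2P_i)$'' under this invariant law. A martingale/ergodic-type argument, in the spirit of \cite{CLGpeeling}, then yields the displayed approximation in probability, uniformly on compact subsets of $t>0$.

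Once this is in hand the passage to the scaling limit is routine. Combining Proposition \ref{prop:scalingperimeter} with the estimate $\mathbb{E}[1/P_n]\leq C\,n^{-1/(a-1)}$ from \eqref{eq:estim1/Pn}, the very same Riemann-sum computation as in the proof of Proposition \ref{prop:scalingeden} gives, jointly with Theorem \ref{thm:scalingperimeter+volume},
$$\left(n^{-(a-2)/(a-1)}\sum_{i=0}^{[nt]}\frac{1}{2P_i}\right)_{t\geq 0} \;\xrightarrow[n\to\infty]{(d)}\; \left(\frac{1}{2\mathsf{p}_{\mathbf{q}}}\int_0^t\frac{\mathrm{d}u}{S^\uparrow_u}\right)_{t\geq 0},$$
from which the third component of the theorem follows with $\mathsf{h}_{\mathbf{q}}=\mathsf{a}_{\mathbf{q}}/(2\mathsf{p}_{\mathbf{q}})$. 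Tightness near $t=0$ is handled by Markov's inequality applied to $\mathbb{E}\bigl[\sum_{i\leq n\varepsilon} 1/P_i\bigr]$ exactly as at the end of the proof of Proposition \ref{prop:scalingeden}, while the almost sure integrability of $1/S^\uparrow_u$ at $0^+$ (valid since $a>2$) is obtained from the same Fatou-type argument.

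The main obstacle is the ergodic step for $L_i/P_i$. The edge-peeling process is genuinely more subtle than the face-peeling of \cite{CLGpeeling} because a single peeling step of case~2 may swallow a bubble lying either on the old-layer arc or on the new-layer arc, redistributing a macroscopic portion of the boundary between the two arcs; in particular, when the old arc happens to be short, one negative jump of $P$ can wipe it out entirely and trigger an immediate height increment together with a macroscopic perimeter drop. Verifying that these boundary effects remain asymptotically negligible, and that they do not disrupt the ergodic averaging producing $\mathsf{a}_{\mathbf{q}}$, is where the real work of the proof lies.
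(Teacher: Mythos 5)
Your skeleton matches the paper's: reduce to the third component, establish the mean-field approximation $\Delta H_i \approx \mathsf{a}_{\mathbf{q}}/(2P_i)$, convert the resulting sum into $\int_0^t \mathrm{d}u/S^\uparrow_u$ via Proposition \ref{prop:scalingperimeter} and \eqref{eq:estim1/Pn}, and treat small times separately. The divergence is in how the constant $\mathsf{a}_{\mathbf{q}}$ is produced, which is precisely where you defer the ``real work''. The paper does not prove any invariant law for the ratio $L_i/P_i$ (its $D_i$); instead it introduces the additive functional $A_n$ counting how many edges of $\bigcup_r \partial\overline{\mathrm{Ball}}_r^\dagger(B_\infty)$ have been swallowed by $\mathfrak{e}_n$, and proves the law of large numbers $A_n/n \to \mathsf{a}_{\mathbf{q}}$ of \eqref{analoProp13} by a direct one-step computation: each peeling step swallows the peeled edge (contribution $1$), plus $2k+1$ further layer-edges when a bubble of perimeter $2k$ is swallowed to the right, whence $\E[\Delta A_n]\approx 1+\frac{1}{2}\sum_{k\geq 1}\nu(-k)(2k-1)=\mathsf{a}_{\mathbf{q}}$. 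The only control needed on the pair $(D_i,2P_i-D_i)$ is \eqref{eq:idealenvi}, namely that both arcs are simultaneously large at all but a negligible fraction of times; this is much weaker than an ergodic theorem for the ratio and is extracted from the explicit kernel \eqref{transi-dual}. Since $A_{\theta_r}$ is essentially $\sum_{s<r}2P_{\theta_s}$, the LLN for $A_n$ translates into $\theta_{r+1}-\theta_r\approx 2P_{\theta_r}/\mathsf{a}_{\mathbf{q}}$ and hence into your displayed approximation. Your proposed route via a stationary distribution for $L_i/P_i$ and a ``rate of layer completions'' would in effect require the hitting rate of $0$ by $L_i$, a harder and less explicit object; the $A_n$ device is the idea your proposal is missing, and it is also what yields the closed formula for $\mathsf{a}_{\mathbf{q}}$.

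A second, concrete gap: your tightness argument at $t=0$ is imported from the Eden case, where $\tau_n$ is \emph{exactly} a sum of exponentials of means $1/(2P_i)$, so that $\E[\tau_{[n\varepsilon]}]$ is controlled by \eqref{eq:estim1/Pn} for free. Here $H_n$ is only approximately such a sum, and only in probability, so Markov's inequality must be applied to $H_{[\varepsilon n]}$ itself; one therefore needs the genuine expectation bound $\E[H_n]\leq C n^{(a-2)/(a-1)}$. This is the content of Lemma \ref{Hexp}, proved by introducing the interpolated process $H_n'=H_n+G_n/(2P_n)$ and verifying from \eqref{transi-dual} that $\E[\Delta H_n'\mid\mathcal{F}_n]\leq C'/P_n$ uniformly in the configuration. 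It does not follow ``exactly as'' in Proposition \ref{prop:scalingeden} and must be supplied.
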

Let us again give a more geometric interpretation of the above result. Recall from \eqref{eq:interpretation1}
 that the peeling process using algorithm $ \mathcal{L}^\dagger$ discovers balls for the dual graph distance on $ B_{\infty}$ and we denote by $ | \overline{\mathrm{Ball}}_{r}^{\dagger}( B_{\infty})|$ and $|\partial \overline{\mathrm{Ball}}_{r}^{ \dagger}( B_{\infty})|$ respectively the size (number of inner vertices) and the half-perimeter of its unique hole of the hull of the ball of radius $r$ for the dual distance. Then with the same notation as in \eqref{eq:interprescalingeden} the above result implies the convergence in distribution in the sense of Skorokhod
 \begin{eqnarray}
 \left( \frac{ | \partial  \overline{\mathrm{Ball}}_{[tn]}^{ \dagger}(B_{\infty})|}{n^{ \frac{1}{a-2}}}, \frac{| \overline{\mathrm{Ball}}_{[tn]}^{ \dagger}(B_{\infty})|}{n^{ \frac{a-1/2}{a-2}}}\right)_{t \geq 0} 
 &\xrightarrow[n\to\infty]{(d)}&
  \left( \mathsf{p}_{ \mathbf{q}} \cdot S^\uparrow_{\vartheta_{t / \mathsf{h}_{ \mathbf{q}}}},  \mathsf{v}_{ \mathbf{q}} \cdot Z_{\vartheta_{t/ \mathsf{h}_{ \mathbf{q}}}}\right)_{t \geq 0}.
 \end{eqnarray}

\proof[Proof of Theorem \ref{thm:scalinglayers}.] The proof of Theorem \ref{thm:scalinglayers} again follows the steps of \cite{CLGpeeling} and we therefore only sketch the structure and highlight the main changes. We denote by $ \mathfrak{e}_{0} \subset  \mathfrak{e}_{1} \subset \cdots \subset B_{\infty}$ the peeling process of $B_{\infty}$ using the algorithm $ \mathcal{L}^\dagger$. The idea is to consider the speed at which the peeling with algorithm $ \mathcal{L}^\dagger$ ``turns'' around the boundary. 
To make this precise we introduce for $r\geq 0$ the sets $\mathcal{H}_r$ of oriented boundary edges of $\overline{ \mathrm{Ball}}_{r}^{\dagger}( B_{\infty})$ that have the unique hole on their right.
These can be naturally viewed\footnote{Notice that two oriented boundary edges in $\overline{ \mathrm{Ball}}_{r}^{\dagger}( B_{\infty})$ may appear as opposite orientations of a single edge of $B_\infty$, since in the peeling operation two boundary edges may be identified.} as sets of oriented edges in $B_{\infty}$, allowing us to define their union $\mathcal{H} = \bigcup_{r\geq 0} \mathcal{H}_r$ consisting of all oriented edges in $B_{\infty}$ that belong to the boundary of some ball $\overline{ \mathrm{Ball}}_{r}^{\dagger}( B_{\infty})$.  
We let $A_{n}$ be the number of those oriented edges in $\mathcal{H}$ that have been ``swallowed'' by $\mathfrak{e}_{n}$, i.e. that are present in $\mathfrak{e}_n$ but do not correspond to a boundary edge of $\mathfrak{e}_n$. Then we claim that
  \begin{eqnarray}   
  \label{analoProp13}
  \frac{A_{n}}{n} \xrightarrow[n\to\infty]{(P)}  \mathsf{a}_\qseq := \frac{1}{2}\left(1+ \sum_{k=0}^\infty (2k+1)\nu(k)\right).\end{eqnarray}
The idea to prove this convergence is as follows. 
First notice that at each peeling step at least one edge of $\mathcal{H}$ is swallowed, namely the peel edge itself.
To determine the remaining swallowed edges, we need some definitions.
Recall that the height of an edge $e$ incident to the hole of $ \mathfrak{e}_{i}$ is by definition $ \mathrm{d}_{ \mathrm{gr}}^\dagger(f, \rootface)$ where $f$ is the face adjacent to $e$ inside  $ \mathfrak{e}_{i}$. Let $D_{n}$  be the number of edges on the boundary of $ \mathfrak{e}_{n}$ at height $H_{n}$, the other $G_{n}:=2P_{n}-D_{n}$ edges being at height $H_n+1$. We claim that, for most times $n$ both $G_{n}$ and $D_{n}$ are large enough such that, except on a set of small probability, the number of swallowed edges of $\mathcal{H}$ (in addition to the peel edge) is $2k+1$ precisely when we swallow a bubble of perimeter $2k$ on the right of the peeling point.
Since the latter event occurs with probability asymptotic to $ \frac{1}{2} \nu(-(k+1))$ when the perimeter is large, we find for the variation $\Delta A_n:=A_n-A_{n-1}$ that 
\begin{eqnarray*}  \mathbb{E}[\Delta A_n] \approx 1 +  \frac{1}{2}\sum_{k=1}^\infty \nu(-k)(2k-1).\end{eqnarray*}
The right-hand side is easily seen to be equal to $ \mathsf{a}_{ \mathbf{q}}$ after a few manipulations using the fact that $\nu$ is centered. 

Next we claim that most of the time both $D_{n}$ and $2P_{n}-D_{n}$ are large, or more precisely that 
for every integer $L\geq 1$ we have 
 \begin{eqnarray} \label{eq:idealenvi}\frac{1}{n} \sum_{i=0}^{n} \mathbf{1}_{\{D_i\leq L\;{\it or}\;2P_{i}-D_{i}\leq L\}}
 \xrightarrow[n\to\infty]{(P)}  0.  \end{eqnarray}
To prove the last display, we first recall  from Section \ref{sec:morehtransform} that $P_{n} \to \infty$ and so $D_{n}$ and $2P_{n}-D_{n}$ cannot be both small. Next, we consider the Markov chain $(P_{n},D_{n},H_{n})$ with values in $( \mathbb{Z}_{>0}, \mathbb{Z}_{\geq 0}, \mathbb{Z}_{ \geq 0})$ whose transition kernel $Q$ is easily computed exactly (recall \eqref{eq:qp} and \eqref{eq:qp2}): for $2\leq \ell \leq 2p$ we have 
\begin{equation}
\label{transi-dual}
\begin{array}{ll}
Q((p,\ell,h),(p+k,\ell-1,h))= p_{k+1}^{(p)} &\text{for } k\geq 0\\
Q((p,\ell,h),(p-k,\ell-2k,h))= p^{(p)}_{-k+1}&\text{for }1\leq k<  \frac{\ell}{2}\\
Q((p,\ell,h),(p-k,2(p-k),h+1))= p^{(p)}_{-k+1}\qquad&\text{for }\frac{\ell}{2}\leq  k \leq p-1\\
Q((p,\ell,h),(p-k,\ell-1,h))= p^{(p)}_{-k+1}\qquad&\text{for }1\leq k <  p-(\ell-1)/2\\
Q((p,\ell,h),(p-k,2(p-k),h))= p^{(p)}_{-k+1}\qquad&\text{for }p-(\ell-1)/2\leq  k\leq p-1\,,
\end{array}
\end{equation}
while for $\ell=1$
\[Q((p,1,h),(p+k,2(p+k),h+1)) = \begin{cases} p_{k+1}^{(p)} &\text{for } k\geq 0\\2 p_{k+1}^{(p)} & \text{for } 1-p\leq k\leq -1\end{cases}.\]
Using these inputs we can adapt the proof of \cite[Lemma 12]{CLGpeeling} to obtain \eqref{eq:idealenvi}.

Given \eqref{eq:idealenvi} the proof of \eqref{analoProp13} is analogous\footnote{More precisely, the estimate on the martingale $M_{n}$ of \cite[Proposition 11]{CLGpeeling} now becomes $ \mathbb{E}[(\Delta M_{n})] \leq C n^{3-a}$ which is still sufficient for our purposes since $3-a<1$. Moreover, instead of using the rough bound $|\Delta A_{n}| \leq 1+ 2 |\Delta P_{n}|$ one should use the more precise bound $ |\Delta A_{n}| \leq 1 + 2|\Delta P_{n}|  \mathbf{1}_{ P_{n} \leq 0}$ and use \eqref{eq:bounddeltaperi}. 
} to \cite[Proposition 11 and Proposition 14]{CLGpeeling}. From here one can easily adapt \cite[convergence (54)]{CLGpeeling}, and  prove that we can combine the convergences of \eqref{eq:idealenvi} and Theorem \ref{thm:scalingperimeter+volume} to prove that jointly with the latter convergences, for any $ \varepsilon>0$ we have 
 \begin{eqnarray*} n^{- \frac{a-2}{a-1}}\left( H_{[nt]}- H_{[ \varepsilon n]}\right)_{t \geq  \varepsilon}  &\xrightarrow[n\to\infty]{(d)}&  \left(  \frac{ \mathsf{a}_{ \mathbf{q}}}{2 \mathsf{p}_{ \mathbf{a}}} \int_{ \varepsilon}^t  \frac{ \mathrm{d}u}{S^\uparrow_{u}} \right)_{t \geq  \varepsilon},  \end{eqnarray*}in distribution in the Skorokhod sense. We now let $ \varepsilon \to 0$ in the last display. This causes no problem for the right-hand side since we have seen in the proof of Proposition \ref{prop:scalingeden} that $(S^{\uparrow}_{u})^{-1}$ is almost surely integrable at $0+$. To get control over the left-hand side one must show that for any $\delta >0$ we have $ \lim_{ \varepsilon \to 0} \sup_{n \geq 1} \mathbb{P}( H_{[ \varepsilon n]}  \geq \delta n^{ \frac{a-2}{a-1}}) =0$. As in \cite[Proof of Proposition 10]{CLGpeeling}, this follow from the Markov inequality and Lemma \ref{Hexp} below, which gives control over the expectation of $H_{n}$.
\endproof

\begin{lemma}\label{Hexp}
If $a\in(2,\frac 5 2)$, then there exists a constant $C$ such that $\expec[H_n]\leq C n^{\frac{a-2}{a-1}}$ for every $n\geq 1$.
\end{lemma}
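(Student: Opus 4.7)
I would write $\mathbb{E}[H_n]=\sum_{i=0}^{n-1}\mathbb{P}(\Delta H_i=1)$ (where $\Delta H_i\in\{0,1\}$) and control the sum via a Wald-type lower bound on the durations of the ``layers'' of the peeling, exploiting the monotone descent of the minimal-height arc length $D_i$.

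The key observation is that, from the transition kernel \eqref{transi-dual}, every peeling step strictly decreases $D$: by $1$ in Case 1 or a left swallow, and by $1+2k$ in a right swallow whose bubble of length $2k$ is absorbed inside the $h$-arc. Crucially, the assumption $a>2$ combined with the tail bound \eqref{eq:bounddeltaperi} gives $\sum_{k\geq 1}k\,\nu(-k)<\infty$, so $\mathbb{E}[|\Delta D_i|\mid\mathcal{F}_i]\leq M$ for a constant $M$ independent of $(P_i,D_i)$. Setting $\tau_h:=\inf\{i:H_i=h\}$ and $T_h:=\tau_{h+1}-\tau_h$, one has $D_{\tau_h}=2P_{\tau_h}$ while $D_{\tau_{h+1}}\leq 0$; optional stopping applied to the submartingale $D_i+iM$ between $\tau_h$ and $\tau_{h+1}$ then yields
$$
\mathbb{E}[T_h\mid\mathcal{F}_{\tau_h}]\,\geq\,\tfrac{2}{M}\,P_{\tau_h}.
$$

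Since $\{h\leq H_n\}=\{\tau_h\leq n\}$ is $\mathcal{F}_{\tau_h}$-measurable, summing the above with the tower property and $\tau_{H_n}\leq n$ gives $\mathbb{E}[\sum_{h=0}^{H_n-1}P_{\tau_h}]\leq C\,n$; the last (possibly incomplete) layer at time $n$ is handled by the crude bound $T_{H_n}\leq 2P_{\tau_{H_n}}$ together with $\mathbb{E}[P_{\tau_{H_n}}]\leq C\,n^{1/(a-1)}\ll n$, which follows from Proposition~\ref{prop:scalingperimeter}.

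To conclude, I would invert this linear-in-perimeter estimate into a power estimate in $H$: using \eqref{eq:estim1/Pn} and a union bound, one shows $P_{\tau_h}\gtrsim h^{1/(a-2)}$ uniformly in $h\leq H_n$ on an event of suitably large probability, so that $\sum_{h<H_n}P_{\tau_h}\gtrsim H_n^{(a-1)/(a-2)}$. Since $(a-1)/(a-2)>1$ for $a\in(2,5/2)$, Jensen's inequality applied to the convex function $x\mapsto x^{(a-1)/(a-2)}$ yields $\mathbb{E}[H_n^{(a-1)/(a-2)}]\leq Cn$, whence $\mathbb{E}[H_n]\leq C'\,n^{(a-2)/(a-1)}$. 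The main obstacle is precisely this last step: propagating the perimeter scaling to the random subsequence $(P_{\tau_h})_h$ and controlling the rare event that the perimeter dips low at a layer-start time with enough uniformity to survive Jensen.
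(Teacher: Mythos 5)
Your opening steps are sound: from \eqref{transi-dual} every peeling step strictly decreases $D$ until the layer closes, and since $a>2$ the bound \eqref{eq:bounddeltaperi} gives $\sum_{k\ge1}k\,\nu(-k)<\infty$, so an optional-stopping argument does yield $\expec[\tau_{h+1}-\tau_h\mid\mathcal{F}_{\tau_h}]\geq c\,P_{\tau_h}$ and hence $\expec\bigl[\sum_{h<H_n}P_{\tau_h}\bigr]\leq Cn$. The gap is exactly where you locate it, and it is not a technicality. To convert that estimate into $\expec[H_n]\lesssim n^{(a-2)/(a-1)}$ you need the lower bound $P_{\tau_h}\gtrsim h^{1/(a-2)}$ uniformly over the \emph{random} times $\tau_h$; but this is precisely (a quantitative form of) the perimeter growth asserted in Theorem \ref{thm:scalinglayers}, for which the present lemma is itself an ingredient, so the argument is circular unless that bound is established independently. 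Nothing available at this stage of the paper delivers it: \eqref{eq:estim1/Pn} controls $\expec[1/P_n]$ at a \emph{fixed} time $n$ and says nothing uniform about the subsequence $(P_{\tau_h})_h$, and a union bound over $h\leq H_n$ would require a lower bound on $\tau_h$ --- which is what you are trying to prove. Moreover, even granting $P_{\tau_h}\geq c\,h^{1/(a-2)}$ on an event of probability $1-\eps$, the Jensen step only bounds $\expec[H_n^{(a-1)/(a-2)}\mathbf{1}_{\mathrm{good}}]$ and you would still need to control $H_n$ on the complementary event, where it could a priori be huge.

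The paper's proof runs in the opposite direction and needs no lower bound on the perimeter. One interpolates the height by $H_n':=H_n+1-\frac{D_n}{2P_n}$, so that $H_n\leq H_n'\leq H_n+1$, and checks directly from the kernel \eqref{transi-dual} that $\expec[\Delta H_n'\mid (P_n,D_n,H_n)=(p,\ell,h)]\leq C'/p$ for every configuration: the increase $\ell/(2p)$ harvested when the layer closes is paid for continuously by the decrease of $D_n/(2P_n)$ at each step, and the error terms are summable precisely because $\sum_k k\,\nu(\pm k)<\infty$ for $a>2$. Summing over $i\leq n$ and invoking \eqref{eq:estim1/Pn} gives $\expec[H_n]\leq\expec[H_n']\leq C''\sum_{i=1}^{n}i^{-1/(a-1)}\leq Cn^{(a-2)/(a-1)}$. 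If you wish to salvage your route you must first prove the perimeter lower bound at layer-start times by independent means, which is substantially harder than the one-step increment estimate above; as written, your argument does not close.
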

\begin{proof} We interpolate $H$ by a more ``continuous'' process and let $H_n' := H_n + \frac{G_n}{2P_n} = H_n + 1 - \frac{D_n}{2P_n}$ such that $H_{n}+1 \geq H_n' \geq H_n$ for all $n\geq 0$.
We will compute the expectation of the change $\Delta H'_n := H'_{n+1}-H'_n$ and show that there exists a $C'>0$ such that $\expec[ \Delta H'_n | \mathcal{F}_n] < C'/P_n$ for all $n$ and all $\mathcal{F}_n$.
When $(P_n,D_n,H_n)=(p,1,h)$ we easily get $\expec[\Delta H_n' | (P_n,D_n,H_n)=(p,1,h)] = \frac{1}{2p}$, so let us concentrate on the case $D_n=\ell\geq 2$.
We have
\begin{align*}
\expec[\Delta H_n' | (P_n,D_n,H_n)=(p,\ell,h)] &= \sum_{k=0}^\infty p_{k+1}^{(p)} E_0(p,\ell,k) \\
&\quad+ \sum_{k=1}^{p-1} p_{-k+1}^{(p)} (E_{\mathrm{left}}(p,\ell,-k)+E_{\mathrm{right}}(p,\ell,-k)),
\end{align*}
where the terms $E_0(p,\ell,k)$, $E_{\mathrm{left}}(p,\ell,-k)$, and $E_{\mathrm{right}}(p,\ell,-k)$ correspond to the contributions of respectively the first line, the second and third line, and the last two lines of the transition kernel \eqref{transi-dual}.
A simple calculation shows that they satisfy
\begin{align*}
E_0(p,\ell,k) &=\frac{\ell}{2p}-\frac{\ell-1}{2(p+k)}= \frac{p+k\ell}{2p(p+k)}\leq \frac{1+k}{p+k},\\
E_{\mathrm{left}}(p,\ell,-k)&=\frac{\ell}{2p}-\left(\frac{\ell-2k}{2(p-k)}\vee 0\right) \leq \frac{k}{p},\\
E_{\mathrm{right}}(p,\ell,-k)&=\frac{\ell}{2p}-\left(\frac{\ell-1}{2(p-k)}\wedge 1\right) \leq \frac{k}{p}. 
\end{align*}

Using that $\sqrt{k} \leq h^\uparrow(k) \leq 2\sqrt{k}$ for all $k \geq 0$ we then obtain the bounds
\begin{align*}
\sum_{k=0}^\infty p_{k+1}^{(p)} E_0(p,\ell,k) &\leq 2 \sum_{k=1}^\infty \frac{(k+1) \nu(k)}{\sqrt{p(p+k)}}\leq \frac{2}{p} \sum_{k=0}^\infty (k+1)\nu(k) = \frac{C_0}{p},\\
\sum_{k=1}^{p-1} p_{1-k}^{(p)} (E_{\mathrm{left}}(p,\ell,-k)&+E_{\mathrm{right}}(p,\ell,-k)) \leq \frac{1}{p} \sum_{k=1}^{p-1} \frac{h^\uparrow(p-k)}{h^\uparrow(p)} k\nu(-k) \leq \frac{1}{p} \sum_{k=1}^{\infty} k\nu(-k)=\frac{C_1}{p}.
\end{align*}
Combining these we conclude that $\expec[\Delta H_n' | (P_n,D_n,H_n)=(p,\ell,h)] \leq C'/p$ for all triples $(p,\ell,h)$ and therefore $\expec[ \Delta H'_n ] \leq C'' n^{-1/(a-1)}$ by  \eqref{eq:estim1/Pn}.
It follows that $\expec[H_n] \leq \expec[H'_n] \leq C n^{\frac{a-2}{a-1}}$ for some $C>0$. 
\end{proof}

\section{The dense phase}
\begin{center} \hrulefill  \quad \textit{In this section we suppose that $a \in \left( \frac32;2\right)$ } \hrulefill  \end{center}
We now focus on the study of the dense phase corresponding to $ a \in (3/2;2)$. We start with an easy but yet striking result in the case of the Eden model and then move to the more precise study of the geometry of  $ B_{\infty}^\dagger$. 
\subsection{Eden model and transience}
Recall that $ \mathrm{d_{fpp}}(\cdot,\cdot)$ is the first-passage percolation metric on $ {B}_{\infty}^\dagger$ for which its edges are endowed with i.i.d.\,exponential weights. As usual $ \rootface$ denotes the root face of $B_{\infty}$ which is the origin of $ B_{\infty}^\dagger$. 
\begin{proposition} \label{prop:finiteexpect} When $a \in (3/2;2)$ we have 
\[  \mathbb{E}\left[\mathrm{d_{fpp}}( \rootface, \infty)\right] = \expec[N_0] < \infty,\]
where $\mathrm{d_{fpp}}( \rootface, \infty)$ is the infimum of the \textsc{fpp}-length of all infinite paths in $ B_{\infty}^\dagger$, and $N_0$ is the number of times the random walk $(W_i)_{i \geq 0}$ started at 1 visits 0.
\end{proposition}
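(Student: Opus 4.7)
The plan is to identify $\mathrm{d_{fpp}}(\rootface, \infty)$ with $\tau_\infty := \lim_{i \to \infty} \tau_i$, the blow-up time of the Eden peeling exploration of $B_\infty$, and then to compute $\mathbb{E}[\tau_\infty]$ using Proposition~\ref{prop:peelingeden=uniform} together with Lemma~\ref{lem:1/P}. For the identification I would argue as follows. Since the peeling exploration a.s.\ exhausts $B_\infty$ (as pointed out at the end of Section~\ref{sec:perimeter}) and $\mathfrak{e}_i = \overline{\mathrm{Ball}}_{\tau_i}^{\mathrm{fpp}}(B_\infty)$, if $\tau_\infty < \infty$ then infinitely many dual edges are fully explored by time $\tau_\infty$. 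From the elementary inequality $\mathrm{d_{fpp}}(\rootface,e_{\pm}) \leq \bigl(\mathrm{d_{fpp}}(\rootface,e_-)+\mathrm{d_{fpp}}(\rootface,e_+)+x_e\bigr)/2 \leq \tau_\infty$, valid as soon as the edge $e$ is fully explored by time $\tau_\infty$, I conclude that the (non-hulled) ball $\mathrm{Ball}_{\tau_\infty}^{\mathrm{fpp}}(B_\infty)$ itself contains infinitely many faces, and a K\"onig compactness argument in the locally finite dual $B_\infty^\dagger$ then produces an infinite path of \textsc{fpp}-length at most $\tau_\infty$, i.e.\ $\mathrm{d_{fpp}}(\rootface,\infty) \leq \tau_\infty$. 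Conversely, any infinite path of finite \textsc{fpp}-length forces infinitely many edges to be fully explored in finite time, which gives the reverse inequality.

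Next, Proposition~\ref{prop:peelingeden=uniform} lets me write $\tau_{i+1}-\tau_i = \mathbf{e}_i/(2P_i)$ with $(\mathbf{e}_i)_{i\geq 0}$ i.i.d.~$\mathcal{E}(1)$ independent of the perimeter process $(P_i)_{i \geq 0}$. Tonelli then yields
$$\mathbb{E}[\tau_\infty] = \sum_{i=0}^\infty \mathbb{E}\!\left[\frac{\mathbf{e}_i}{2P_i}\right] = \frac12 \sum_{i=0}^\infty \mathbb{E}\!\left[\frac{1}{P_i}\right] = \sum_{k=1}^\infty \prob_1(W_k = 0),$$
the last equality being precisely the second identity of \eqref{eq:expinvP}. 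Since $W_0 = 1 \neq 0$ almost surely, the right-hand side equals $\mathbb{E}[N_0]$, and finiteness follows from the local limit estimate recalled just after Proposition~\ref{prop:scalingperimeter}, namely $\prob_1(W_k = 0) \sim C_0 k^{-1/(a-1)}$ as $k \to \infty$, combined with the fact that $1/(a-1) > 1$ in the dense phase $a \in (3/2,2)$.

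The only delicate step in this plan is the identification $\mathrm{d_{fpp}}(\rootface,\infty) = \tau_\infty$, where one must carefully distinguish the \textsc{fpp} ball $\mathrm{Ball}_t^{\mathrm{fpp}}$ from its hull $\overline{\mathrm{Ball}}_t^{\mathrm{fpp}}$: faces trapped in filled-in finite components of the hull need not lie at small \textsc{fpp}-distance from $\rootface$, so the argument really must be carried out for the unhulled ball using the triangle-type inequality above. Once this identification is established, the computation of the expectation is just Fubini followed by a direct application of Lemma~\ref{lem:1/P}, and the convergence of the series is a routine consequence of the local limit theorem asymptotics already recorded in the paper.
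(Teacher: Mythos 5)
Your proposal is correct and follows essentially the same route as the paper: identify $\mathrm{d_{fpp}}(\rootface,\infty)$ with $\tau_\infty$, compute $\mathbb{E}[\tau_\infty]=\sum_i\mathbb{E}[1/(2P_i)]$ via Proposition \ref{prop:peelingeden=uniform} and Lemma \ref{lem:1/P}, and conclude finiteness from the local limit asymptotics since $1/(a-1)>1$. The only difference is that you spell out the identification $\mathrm{d_{fpp}}(\rootface,\infty)=\tau_\infty$ (which the paper asserts without proof), and your argument for it — fully explored edges have endpoints at \textsc{fpp}-distance at most $\tau_\infty$, plus K\"onig's lemma, plus the converse via finiteness of $\overline{\mathrm{Ball}}^{\mathrm{fpp}}_t$ for $t<\tau_\infty$ — is sound.
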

\proof We do the peeling process on $ B_{\infty}$ with the algorithm of Proposition \ref{prop:peelingeden=uniform} and recall the notation $(\tau_{i})_{i \geq 0}$ of Section \ref{sec:edenalgo}.  The proposition boils down to computing the expectation of $ \tau_{\infty} = \lim_{i \to \infty} \tau_{i}$. By Proposition \ref{prop:scalingeden}, conditionally on the perimeter process $(P_{i})_{i \geq 0}$ during the exploration, the increments $ \tau_{i+1}-\tau_{i}$ are independent exponential variables of mean $1/(2P_{i})$. Hence we have
\[ \mathbb{E}[\tau_{\infty}] = \sum_{i=0}^{\infty} \mathbb{E}\left[\frac{1}{2P_{i}}\right] \underset{ \mathrm{Lem.}\,\ref{lem:1/P}}{=} \sum_{k=1}^\infty \prob_1(W_k=0) = \expec[N_0].\]
From the local limit theorem \cite[Theorem 4.2.1]{IL71} we have $ \mathbb{P}_{1}(W_{k}=0) \sim C_{0}\, k^{-1/(a-1)}$ as $k \to \infty$ for some constant $C_{0}>0$ and so when $a \in (3/2;2)$ we have $ \mathbb{E}[N_{0}]< \infty$  (in other words  the walk $(W_i)_{i\geq 0}$ is transient whenever $a <2$).
\endproof

\begin{corollary} \label{cor:transient} When $a \in( 3/2;2)$ the random lattice $B^\dagger_{\infty}$ is almost surely transient (for the simple random walk).
\end{corollary}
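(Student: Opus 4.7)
The plan is to deduce the corollary from Proposition~\ref{prop:finiteexpect} by a Nash--Williams-type cutset argument that exploits the independence of the i.i.d.\ exponential weights $(x_e)$ from the random graph $B_\infty^\dagger$. By Proposition~\ref{prop:finiteexpect} we have $\mathbb{E}[\mathrm{d_{fpp}}(\rootface,\infty)]<\infty$, and in particular $\mathrm{d_{fpp}}(\rootface,\infty)<\infty$ almost surely.

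I would argue by contradiction: suppose $B_\infty^\dagger$ is recurrent with positive probability. Conditionally on this event and on the graph, the converse direction of the Nash--Williams criterion yields pairwise disjoint edge cutsets $(\Pi_n)_{n\geq 1}$ separating $\rootface$ from infinity with $\sum_n 1/|\Pi_n|=+\infty$. Since the weights are i.i.d.\ $\mathcal{E}(1)$ and independent of the graph, the random variables $\min_{e\in\Pi_n}x_e$ are, conditionally on $B_\infty^\dagger$, independent exponentials with mean $1/|\Pi_n|$. Any infinite path $\gamma$ from $\rootface$ must cross every $\Pi_n$, so
\[
\mathrm{d_{fpp}}(\rootface,\infty)\ \geq\ \sum_{n\geq 1}\min_{e\in\Pi_n}x_e,
\]
and taking conditional expectations forces $\mathbb{E}[\mathrm{d_{fpp}}(\rootface,\infty)\mid B_\infty^\dagger]=+\infty$ on the recurrence event, hence $\mathbb{E}[\mathrm{d_{fpp}}(\rootface,\infty)]=+\infty$, contradicting Proposition~\ref{prop:finiteexpect}.

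The hard part will be the converse Nash--Williams step: for bounded-degree graphs this is classical, but $B_\infty^\dagger$ carries vertices of arbitrarily large degree, so the statement is not automatic. A robust way around this is to replace the cutset picture by T.~Lyons' equivalent characterization of transience via the existence of a unit flow of finite $\ell^2$-energy from $\rootface$ to infinity, and to build such a flow from the FPP structure. More concretely, one can use the peeling-by-layers construction of Section~\ref{sec:edenalgo} to generate a measurable sequence of finite cutsets as the edge boundaries of the hulls $\overline{\mathrm{Ball}}^{\mathrm{fpp}}_{t_i}$, and then adapt the argument above to those cutsets, using that on the recurrence event the sum of reciprocals of their sizes must diverge.
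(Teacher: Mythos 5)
Your pivotal step---extracting, from the assumption of recurrence, pairwise disjoint cutsets $(\Pi_n)$ with $\sum_n|\Pi_n|^{-1}=\infty$---is a \emph{converse} to the Nash--Williams criterion, and that converse is false in general: Nash--Williams is only a sufficient condition for recurrence, and there are recurrent locally finite networks admitting no sequence of disjoint cutsets with divergent reciprocal sum (see the discussion around the criterion in \cite{LP10}). You sense that something is missing but misattribute it to the unbounded degrees of $B_\infty^\dagger$; the obstruction has nothing to do with degrees. Moreover, your concrete fallback is provably a dead end: if you take as cutsets the edge boundaries of the hulls $\overline{\mathrm{Ball}}^{\mathrm{fpp}}_{t_i}(B_\infty)$, their sizes are the perimeters $2P_i$, and the very computation proving Proposition \ref{prop:finiteexpect} gives $\sum_i\mathbb{E}\left[1/(2P_i)\right]=\mathbb{E}[N_0]<\infty$, hence $\sum_i(2P_i)^{-1}<\infty$ almost surely. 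For these cutsets the Nash--Williams sum therefore converges unconditionally, and no contradiction with recurrence can be extracted from them. (Your computation that the minima of the weights over disjoint cutsets are conditionally independent exponentials whose sum lower-bounds $\mathrm{d_{fpp}}(\rootface,\infty)$ is correct, but it only shows that no cutset sequence with divergent reciprocal sum exists---which, by itself, implies nothing about transience.)

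The flow characterization you mention in your last paragraph is the right way out and is exactly the paper's argument, but you leave it as a one-line remark whereas it constitutes the entire proof. Concretely, one takes $\vec\Gamma$ to be the a.s.\ unique FPP-shortest infinite path from $\rootface$ and defines the unit flow $\theta(\vec e)=\mathbb{P}_{\mathrm{fpp}}(\vec e\in\vec\Gamma)-\mathbb{P}_{\mathrm{fpp}}(\text{reversal of }\vec e\in\vec\Gamma)$ (the random-path method). The energy is then controlled by the elementary inequality $\mathbb{E}_{\mathrm{fpp}}[x_e\mathbf{1}_A]\geq C\,\mathbb{P}_{\mathrm{fpp}}(A)^2$, which holds for an exponential weight $x_e$ and \emph{any} event $A$---no independence is required, which is essential since $\{e\in\Gamma\}$ depends on $x_e$. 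Applying this with $A=\{e\in\Gamma\}$ and summing over edges yields $\sum_{\vec e}\theta(\vec e)^2\leq\frac{4}{C}\,\mathbb{E}_{\mathrm{fpp}}[\mathrm{Length_{fpp}}(\Gamma)]<\infty$ by Proposition \ref{prop:finiteexpect}, and transience follows from T.~Lyons' finite-energy-flow criterion. Until this construction and energy bound (or a genuine substitute) are supplied, your proposal does not prove the corollary.
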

\proof We use the method of the random path \cite[Section 2.5 page 41]{LP10}. More precisely, the \textsc{fpp} model on $B^\dagger_{\infty}$ enables us to distinguish an infinite oriented path $\vec{\Gamma} : \rootface \to \infty$ in $B_{\infty}^\dagger$ which is the shortest infinite path starting from the origin for the \textsc{fpp}-distance
 (uniqueness of this path is easy to prove). In our case, this path can equivalently be seen as an unoriented path $\Gamma$ since it is simple. From this path $\vec{\Gamma}$ one constructs a unit flow $\theta$ on the directed edges with source at $\rootface$ by putting for any oriented edge $ \vec{e}$ of $B_{\infty}^\dagger$
\[ \theta( \vec{e}) = \mathbb{P}_{ \mathrm{fpp}}( \vec{e} \in \vec{\Gamma}) - \mathbb{P}_{ \mathrm{fpp}}( \reflectbox{\ensuremath{\vec{\reflectbox{\ensuremath{e}}}}} \in \vec{\Gamma}).\]
To show that the energy of this flow is finite, we compare it to the  expected \textsc{fpp}-length of $\vec{\Gamma}$  which is almost surely finite by Proposition \ref{prop:finiteexpect}. More precisely, if $x_{e}$ denotes the exponential weight on the edge $e$, we just remark that there exists a constant\footnote{{In fact one can take $C = \inf_{s>0} \left(\int_{0}^{s} \mathrm{d}x \, x e^{-x} \right)/ \left(\int_{0}^{s} \mathrm{d}x \,e^{-x} \right)^{2}= \frac{1}{2}$}} $C>0$ such that for any event $A$ we have 
\[ \mathbb{E}_{ \mathrm{fpp}}\left[ x_{e} \mathbf{1}_{A}\right] \geq C\, \mathbb{P}_{ \mathrm{fpp}}(A)^2.\] Indeed, if $\delta = \mathbb{P}(A)$ we have
$\mathbb{E}_{ \mathrm{fpp}}[ x_{e} \mathbf{1}_{A}] \geq \mathbb{E}_{ \mathrm{fpp}}[ x_{e} \mathbf{1}_{A} \mathbf{1}_{x_{e} \geq \delta/2}] \geq \delta/2\, \mathbb{P}_{ \mathrm{fpp}}(A \cap \{ x_{e} \geq \frac{\delta}{2}\})$ and use the fact that $\mathbb{P}_{ \mathrm{fpp}}(A \cap \{ x_{e} \geq \frac{\delta}{2}\}) \geq \mathbb{P}_{ \mathrm{fpp}}(A)+ \mathbb{P}_{ \mathrm{fpp}}(x_{e} \geq \frac{\delta}{2}) -1 = \delta + e^{-\delta/2}-1 \geq \delta/2$. Using this we can write
\begin{align*} \sum_{\vec{e} \in \overrightarrow{\mathsf{Edges}}( B_{\infty})} \theta(\vec{e})^2 &\leq  4 \sum_{e \in\mathsf{Edges}( B_{\infty}) } \mathbb{P}_{ \mathrm{fpp}}( e \in \Gamma)^2   \leq \frac{4}{C}
 \sum_{e \in\mathsf{Edges}( B_{\infty}) } \mathbb{E}_{ \mathrm{fpp}}\left[ \mathbf{1}_{e \in \Gamma} x_{e}\right] \\&=  \frac{4}{C}\, \mathbb{E}_{ \mathrm{fpp}}[ \mathrm{Length_{fpp}}(\Gamma)] <\infty. \end{align*}
This proves almost sure transience of the lattice as desired.  \endproof

\subsection{Dual graph distance}
We now come back to the dual graph distance $ \mathrm{d}_{ \mathrm{gr}}^\dagger$ on $B_{\infty}^\dagger$.   Our main result which parallels Theorem \ref{thm:scalinglayers} is the following:

\begin{theorem} \label{thm:dense} For $a \in (3/2;2)$ there exists a constant $ \mathsf{c}_{a} \in (0,\infty)$ such that with the same notation as in the geometric interpretation below Theorem \ref{thm:scalinglayers} we have the following convergences in probability
\[ r^{-1}\log\left(\left|\partial \mathrm{Ball}^\dagger_r(B_\infty)\right|\right) \xrightarrow[r\to\infty]{ (\mathbb{P})} \mathsf{c}_{a}, \qquad  r^{-1}\log\left(\left|\mathrm{Ball}^\dagger_r(B_\infty)\right|\right) \xrightarrow[r\to\infty]{ (\mathbb{P})}  ({a-1/2}) \cdot \mathsf{c}_{a}.\]
\end{theorem}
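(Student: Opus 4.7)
The approach is to study the Markov chain of layer perimeters. Work with the peeling by layers exploration $\mathcal{L}^\dagger$ of $B_\infty$ and let $\theta_r = \inf\{n\geq 0 : H_n = r\}$, so that $\mathfrak{e}_{\theta_r} = \overline{\mathrm{Ball}}_r^\dagger(B_\infty)$ by~\eqref{eq:interpretation1}. Denote $L_r := P_{\theta_r}$. By the spatial Markov property, $(L_r)_{r\geq 0}$ is a Markov chain whose transition kernel from $\ell$ is the law of $L_1$ under the peeling of an infinite Boltzmann map with boundary of half-perimeter $\ell$; since, up to a bounded difference coming from the filling of finite holes, $L_r$ equals $|\partial\mathrm{Ball}_r^\dagger(B_\infty)|/2$, the first statement of the theorem is equivalent to $r^{-1}\log L_r \to \mathsf{c}_a$ in probability.

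The key step is to show that as $\ell\to\infty$ the ratio $L_1/\ell$ converges in distribution to a positive random variable $Y$ with $\expec[\log Y]=\mathsf{c}_a\in(0,\infty)$. The natural scale is $n\asymp \ell^{a-1}$: by Proposition~\ref{prop:scalingperimeter} (combined with standard scaling of $h^\uparrow$-transforms) the rescaled perimeter $(\ell^{-1}P_{\lfloor t\ell^{a-1}\rfloor})_{t\geq 0}$, started from $\ell$, converges in law to a conditioned $(a-1)$-stable process $(\mathcal{S}_t)_{t\geq 0}$. Jointly with this, one expects the rescaled count of boundary edges at the current minimal height, $\ell^{-1}D_{\lfloor t\ell^{a-1}\rfloor}$, to converge to a c\`adl\`ag process $(\mathcal{D}_t)_{t\geq 0}$ starting from $2\mathcal{S}_0$ that decreases at unit rate (the $-1$ drift from ordinary peeling steps) and jumps down by the absolute value of each negative jump of $\mathcal{S}$ (the ``swallow-right'' events read off from the kernel~\eqref{transi-dual}). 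Setting $\mathcal{T}:=\inf\{t\geq 0:\mathcal{D}_t\leq 0\}$, one then identifies $Y\eqdist \mathcal{S}_\mathcal{T}/\mathcal{S}_0$. The main obstacle will be to make this rigorous: one must establish the joint scaling limit of $(P,D)$, show the a.s.\,continuity of the functional $\mathcal{S}\mapsto(\mathcal{T},\mathcal{S}_\mathcal{T})$ at the limit process (which uses that $\mathcal{S}$ has infinitely many negative jumps in every interval, so that $\mathcal{D}$ strictly crosses zero), and obtain uniform integrability of $\log(L_1/\ell)$; the upper tail is controlled by the negative jump bound~\eqref{eq:bounddeltaperi}, while the lower tail is subtle and requires ruling out single-step catastrophic swallowings that would drive $L_1$ to $O(1)$. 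Positivity of $\mathsf{c}_a$ comes from the fact that a typical layer-completing event consumes a macroscopic right-bubble so that $\mathcal{S}_\mathcal{T}$ is a strictly positive fraction of $\mathcal{S}_0$; finiteness is a direct moment estimate using $q_k\sim c\kappa^{k-1}k^{-a}$.

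Once the one-layer convergence $\expec[\log(L_1/\ell)]\to\mathsf{c}_a$ is in hand together with the uniform integrability above, a Markov-chain law of large numbers comparing $(\log L_r)$ to an i.i.d.\,random walk (via the scale invariance of the ratios $L_{r+1}/L_r$) yields $r^{-1}\log L_r\to\mathsf{c}_a$ in probability. The volume statement then reduces to the perimeter statement: $|\overline{\mathrm{Ball}}_r^\dagger|$ is the cumulative volume of the bubbles swallowed up to time $\theta_r$, and by Proposition~\ref{prop:volume} together with the exponential growth of $L_r$, the dominant contribution comes from bubbles of perimeter of order $L_r$, so that $|\overline{\mathrm{Ball}}_r^\dagger|$ grows like $L_r^{a-\half}$ on the logarithmic scale, yielding $r^{-1}\log|\overline{\mathrm{Ball}}_r^\dagger|\to(a-\half)\mathsf{c}_a$ in probability.
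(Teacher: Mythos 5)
Your architecture is the same as the paper's: take the joint scaling limit of the perimeter $P$ and of the count $D$ of boundary edges at the current minimal height to identify the limit law of $P_{\theta_1}/p$, prove uniform integrability of $\log(P_{\theta_1}/p)$, run an $L^2$ law of large numbers for the multiplicative chain $(P_{\theta_r})_r$, and reduce the volume statement to the perimeter one. However, your identification of the limit of $D$ is incorrect: you keep a unit-rate downward drift ("decreases at unit rate"), whereas in the dense phase the time scale is $p^{a-1}$ against a space scale of $p$, so the accumulated $-1$ per peeling step over $tp^{a-1}$ steps is $O(p^{a-1})=o(p)$ and the drift \emph{vanishes}; the limit $\mathcal{D}$ is pure jump (this is precisely the dense/dilute dichotomy: in the dilute phase the layer is completed by the drift, in the dense phase by the jumps). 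Moreover left-swallowings also decrement $D$ when the swallowed bubble wraps past the height-$(h+1)$ portion of the boundary, which is why the correct construction \eqref{eq:rules} involves coin flips and the term $\min\bigl(0,(S^\uparrow_{t^-}-\mathcal{D}_{t^-})+\Delta S^\uparrow_t\bigr)$. With your process the claimed convergence $L_1/\ell\to Y$ would be towards the wrong law.

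The more serious gap is positivity of $\mathsf{c}_a$. That a layer-completing event leaves $\mathcal{S}_{\mathcal{T}}$ a strictly positive fraction of $\mathcal{S}_0$ only gives $\mathcal{Z}=\log(\mathcal{S}_{\mathcal{T}}/\mathcal{S}_0)>-\infty$ almost surely; it says nothing about the sign of $\mathbb{E}[\mathcal{Z}]$, which can be negative or zero for an a.s.\ positive ratio. Since the theorem asserts $\mathsf{c}_a>0$, this is the crux, and it requires a real argument: by scale invariance $\log S^\uparrow_{\zeta_k}$ is a sum of $k$ i.i.d.\ copies of $\mathcal{Z}$, transience gives $S^\uparrow_{\zeta_k}\to\infty$, and Fatou's lemma applies because $\log S^\uparrow_{\zeta_k}$ is minorized by $\log \underline{S}^\uparrow_\infty$, whose integrability follows from the $h$-transform tail bound \eqref{eq:tailoverallinfimum} (this is Lemma \ref{lem:EspZ>0}). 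Without such an argument the positivity claim is unsupported. Relatedly, "finiteness is a direct moment estimate" understates what uniform integrability of $\log(P_{\theta_1}/p)$ costs: one needs the exponential tail of $\theta_1$ on the scale $p^{a-1}$ together with a one-jump/maximal inequality for the running supremum of $P$, not just the negative-jump bound \eqref{eq:bounddeltaperi} (see Lemma \ref{lem:estimtech}).
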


The proof of the above theorem is presented in the next section. It mainly relies on Proposition \ref{prop:cvZ} which enables us to see, in the scaling limit, the different times needed for the algorithm $ \mathcal{L}^\dagger$ to complete a full layer, whereas in the dilute phase this information vanishes in the scaling limit. In order to make the proof more digestible, we postpone a few technical estimates to Section \ref{sec:technics}

\subsubsection{Scaling limit of the peeling with algorithm $ \mathcal{L}^\dagger$ in the dense phase}

We perform the peeling process on $ B_{\infty}$ with algorithm $ \mathcal{L}^\dagger$ of Section \ref{sec:algodual}. Recall that $\theta_{r}$ is the first time $i$ when all the faces adjacent to the unique hole of $ \mathfrak{e}_{i}$ are at dual distance at least $r$ from the root face $\rootface$ of $B_{\infty}$.
   
 We shall need to generalize a bit the setup such that during the peeling with algorithm $ \mathcal{L}^\dagger$, we start at time $0$ with a boundary of length $2p$ with $p \geq 1$ (or equivalently that the root face of $B_{\infty}$ has degree $2p$) while still denoting by $\theta_{1}, \theta_{2}, \ldots$ the times it takes to complete one layer, two layers etc. We denote by $ \mathbb{P}_{p}$ and $ \mathbb{E}_{p}$ the corresponding probability and expectation. By the Markov property of the exploration of $ B_{\infty}$ we know that the law of $P_{\theta_{r+1}}$ under $ \mathbb{P}_{1}$ conditionally on $P_{\theta_{r}}=p$ is that of 
$P_{\theta_{1}}$ under $ \mathbb{P}_{p}$. Recall also from Section \ref{sec:dilutelayers} that $D_{i}$ denotes the number of edges on the boundary at minimal height $H_{i}$ after $i$ peeling steps. 
 We now introduce the scaling limit of $(P_{i})_{i\geq0}$,$(D_{i})_{i\geq 0}$ and $(\theta_{i})_{i \geq 0}$ under $ \mathbb{P}_{p}$ when $p \to \infty$.

We first consider $(S^\uparrow_{t})_{ t \geq 0}$ the $(a-1)$-stable Lévy process conditioned to stay non-negative with positivity parameter $\rho$ satisfying $ (1-\rho)(a-1)= \frac{1}{2}$ already introduced in Section \ref{sec:perimeter} but now started from $S_{0}^\uparrow =1$. 
By an  extension of Proposition \ref{prop:scalingperimeter}  (which is granted by \cite{CC08}), we know that $S^{\uparrow}$ is the scaling limit of the perimeter process $P$ under $ \mathbb{P}_{p}$ as $p \to \infty$ in the sense that under $ \mathbb{P}_{p}$
 \begin{eqnarray}  \label{eq:extension1} 
\left(\frac{P_{[t(p/\mathsf{p}_{ \mathbf{q}})^{a-1}]}}{p}\right)_{t \geq 0} & \xrightarrow[p\to\infty]{(d)}&   (S^\uparrow_{t})_{ t \geq 0}, \end{eqnarray} in distribution in the Skorokhod sense as $p \to \infty$.  We now introduce the scaling limit of $D$ by mimicking in the continuous setting the behavior of $D$ with respect to $P$.
In the case when $a <2$, the process $S^\uparrow$ is  pure jump and we can write
\[ S_{t}^\uparrow = 1+\sum_{t_{i} \leq t} \Delta S^\uparrow_{t_{i}},\]
where $t_{1},t_{2}, \ldots$ is a measurable enumeration of its jumps times and $\Delta S^\uparrow_{t} = S^\uparrow_{t}-S^\uparrow_{t^-}$. Independently of $(S^\uparrow_t)_{t\geq 0}$ let also $( \epsilon_{i})_{ i \geq 1}$ be independent fair coin flips taking values in $\{ \mathrm{right}, \mathrm{left}\}$. With these ingredients  we build a new pure jump process $( \mathcal{D}_{t})_{t \geq 0}$ by putting $ \mathcal{D}_{0}= 1$ and for every jump time $t_{i}$ such that $\Delta S^\uparrow_{t_{i}} <0$ is a negative jump we put
 \begin{eqnarray} \label{eq:rules} \Delta  \mathcal{D}_{t_{i}} = \left\{ \begin{array}{ll}   \Delta S^\uparrow_{t_{i}} & \mbox{ if }  \epsilon_{i} = \mathrm{right}\\
\min\big(0,(S^\uparrow_{t_{i}^-}- \mathcal{D}_{t_{i}^-})+\Delta S^\uparrow_{t_{i}}\big) & \mbox{ if } \epsilon_{i} = \mathrm{left},  	\end{array} \right.  \end{eqnarray} as long as $ \mathcal{D}$ stays positive. More precisely, with the above construction, the process $ \mathcal{D}$ is pure jump and (a.s. strictly) non-increasing; we let $\zeta_{1}= \inf \{ t \geq 0 :  \mathcal{D}_{t} < 0\}$ and at time $\zeta_{1}$ we change the value of $ \mathcal{D}_{\zeta_{1}}$ (which otherwise would be strictly negative) and set its new value to be 
\[ \mathcal{D}_{\zeta_{1}} := S_{\zeta_{1}}^\uparrow.\]
From this time on, we apply the rules of \eqref{eq:rules} until $ \mathcal{D}_{t}$ reaches a strictly negative value a time $\zeta_{2}$. Then we reset $ \mathcal{D}_{\zeta_{2}} := S^\uparrow_{\zeta_{2}}$ and iterate the above procedure to construct the full process $ (\mathcal{D}_{t})_{t \geq 0}$ and the sequence of random times $(\zeta_{i})_{i \geq 1}$. See Fig.\,\ref{fig:constructionD}. As promised, these processes are the scaling limits of the discrete processes $(P,D,\theta)$ in the following sense:

\begin{figure}[t]
 \begin{center}
 \includegraphics[width=9cm]{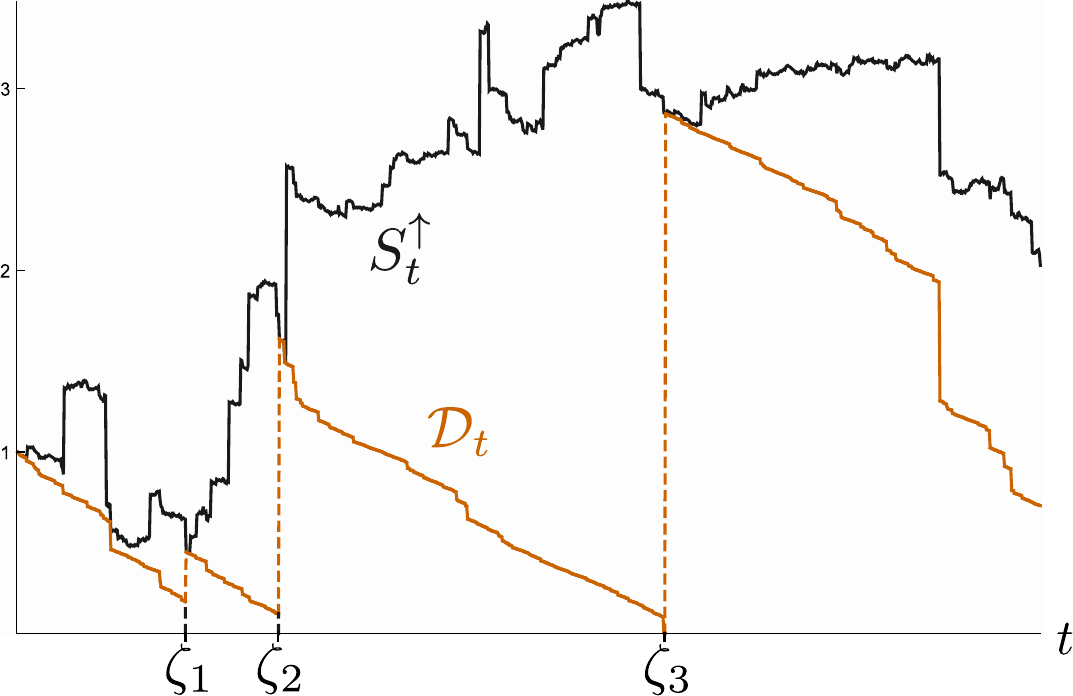}
 \caption{ \label{fig:constructionD}Illustration of the construction of the process $ \mathcal{D}$ from the process $S^\uparrow$ and a sequence of independent coin flips.}
 \end{center}
 \end{figure}

\begin{proposition} \label{prop:cvZ}
We have the following convergences in distribution under $ \mathbb{P}_{p}$
 \begin{equation} \left(\left(  \frac{P_{[t(p/\mathsf{p}_{ \mathbf{q}})^{a-1}]}}{p},\frac{D_{[t(p/\mathsf{p}_{ \mathbf{q}})^{a-1}]}}{2p}\right)_{{t \geq 0}}\!\!,\left(\frac{\theta_{i}}{(p/\mathsf{p}_{ \mathbf{q}})^{a-1}}\right)_{i \geq 1}\right) \!\xrightarrow[p\to\infty]{(d)}\!  \left(\left( S^\uparrow_{t}, \mathcal{D}_{t} \right)_{ t \geq 0}\!\!,(\zeta_{i})_{i \geq 1}\right)\label{eq:convDP}  \end{equation}
 furthermore, jointly with the above convergences we have $(\frac{P_{\theta_{i}}}{p})_{i \geq 1} \to (S^\uparrow_{\zeta_{i}})_{i \geq 1}$ in law.  \end{proposition}

\begin{remark} Let us explain heuristically a crucial difference between the dilute phase $a \in (2;5/2)$ and the dense phase $ a \in (3/2 ; 2)$ above. In the dilute phase, by \eqref{analoProp13} the time needed for the peeling process with algorithm $ \mathcal{L}^\dagger$ to ``turn around'' a boundary of length $p$ and discover a new layer is roughly of order $p$  whereas the scaling in time for the process $(P)$ is $p^{a-1}$ which is much larger than $p$. So the information given by the $(\theta_{i})_{i \geq 1}$ disappears in the scaling limit. In the dense phase however, the time needed to turn around a boundary of perimeter $p$ is roughly $p^{a-1}$ which is precisely the time scaling for the process $(P)$.
\end{remark}

\proof[Proof of Proposition \ref{prop:cvZ}] {The convergence of the rescaled process $P$ towards $S^{\uparrow}$  is given in \eqref{eq:extension1}}. Next, it is easy to see that the definition of $\mathcal{D}$ mimics the discrete evolution of $D$. More formally, for   $i \geq 0$ such that   $\Delta P_{i} = P_{i+1}-P_{i} < 0$ we can define $ \tilde{\epsilon}_{i} \in \{ \mathrm{left}, \mathrm{right}\}$ indicating whether the peeling process swallows a bubble on the left or on the right-hand side of the peeling point. By the probability transitions of the peeling process, conditionally on $(P)$ these variables are independent and uniformly distributed over the two outcomes. We put $\tilde{\epsilon}_{i}= \mathrm{center}$ when the peeling process discovers a new face i.e.~when $\Delta P_{i} \geq 0$. Then using \eqref{transi-dual} we see that for $0 \leq i < \theta_{1}-1$ we have 
 \begin{eqnarray}\label{eq:defimimicD} \Delta D_{i} = \left\{ \begin{array}{ll} 2 \Delta P_{i} 
 & \mbox{ if }\tilde{\epsilon}_{i} = \mathrm{right} \\
 \min(0,(2P_{i}-D_{i})+2 \Delta P_{i}+1) - 1 & \mbox{ if }  \tilde{\epsilon}_{i} = \mathrm{left}\\
-1& \mbox{ if }  \tilde{\epsilon}_{i} = \mathrm{center}\\
 	\end{array} \right.  \end{eqnarray} At time $\theta_{1}$ we then have $D_{\theta_{1}} = 2 P_{\theta_{1}}$ and iterate the last construction for times $\theta_{1} \leq i < \theta_{2}-1$ etc. The above construction of $D$ is the discrete analog of the continuous construction of $ \mathcal{D}$ given in \eqref{eq:rules},  the various factors of two which differ between the above display and \eqref{eq:rules} come from the fact that $P_{i}$ is the half-perimeter at time $i$ whereas $D_{i}$ counts the number of edges at height $H_{i}$ (not divided by two). By the Markov property and the similarity of the constructions of $(D,P)$ and $( \mathcal{D}, S^{\uparrow})$ it is sufficient to prove the convergence until the completion of one layer, that is jointly with \eqref{eq:extension1} we have
	 \begin{equation} \left(\left( \frac{D_{[t(p/\mathsf{p}_{ \mathbf{q}})^{a-1}]}}{2p}\right)_{t \in [0, \theta_{1}/(p/\mathsf{p}_{ \mathbf{q}})^{a-1}]}, \frac{\theta_{1}}{(p/\mathsf{p}_{ \mathbf{q}})^{a-1}}, \frac{P_{\theta_{1}}}{p} \right) \xrightarrow[p\to\infty]{(d)}  \left(\left( \mathcal{D}_{t} \right)_{ t \in [0, \zeta_{1})},\zeta_{1}, S^{\uparrow}_{\zeta_{1}}\right).\label{eq:convDPbis}  \end{equation}
To prove the above display it is convenient to argue by approximation. Fix $ \varepsilon>0$ and denote by $ \mathcal{D}^{( \varepsilon)}$ the process obtained by repeating the construction of $ \mathcal{D}$ from $ S^{\uparrow}$ but only keeping those (negative) jumps of $ S^{\uparrow}$ of absolute size at least $  \varepsilon$. We define accordingly $ \zeta_{1}^{( \varepsilon)}$ to be the first time at which $ \mathcal{D}_{t}^{( \varepsilon)}$ becomes strictly negative. We do the same approximation procedure in the discrete setting and define a process $D^{( \varepsilon)}$ starting from $p$ by applying the rules \eqref{eq:defimimicD} restricted to (negative) jumps of $P$ of size at least $ \varepsilon p$ (in particular the third line in \eqref{eq:defimimicD} is never used) and also define $\theta_{1}^{( \varepsilon)}$ as the first time the process $D^{( \varepsilon)}$ reaches a negative value. Notice that there are only finitely many (random) times before $\theta_{1}$ (resp.~$\zeta_{1}$) for which $P$ (resp.~$S^{ \uparrow}$) has a negative jump of absolute size larger than $  \varepsilon p$ (resp.~$ \varepsilon$) and that for fixed $ \varepsilon$ the process $S^{\uparrow}$ has no jump of size exactly $ \varepsilon$. These facts combined with the convergence in distribution in the Skorokhod sense \eqref{eq:extension1} and with the fact that the variables $ \epsilon_{i}$ and $ \tilde{ \epsilon}_{i}$ are i.i.d.~and uniform over $\{  \mathrm{left}, \mathrm{right}\}$ entails that jointly with \eqref{eq:extension1} we have
\begin{equation} \left(\!\left(\! \frac{D^{( \varepsilon)}_{[t(p/\mathsf{p}_{ \mathbf{q}})^{a-1}]}}{2p}\right)_{t \in [0, \theta^{( \varepsilon)}_{1}/(p/\mathsf{p}_{ \mathbf{q}})^{a-1}]}\!\!\!\!\!\!\!, \frac{\theta^{(\varepsilon)}_{1}}{(p/\mathsf{p}_{ \mathbf{q}})^{a-1}}, \frac{P_{\theta^{(\varepsilon)}_{1}}}{p} \right)\!\xrightarrow[p\to\infty]{(d)}\! \left(\left( \mathcal{D}^{(\varepsilon)}_{t} \right)_{ t \in [0, \zeta^{(\varepsilon)}_{1})}\!\!,\zeta^{(\varepsilon)}_{1}, S^{\uparrow}_{\zeta^{(\varepsilon)}_{1}}\right).\label{eq:convDPbis2}  \end{equation}
We wish to let $ \varepsilon \to 0$ but for this we need some uniform control with respect to $p$ for the left-hand side. We begin with the right-hand side: since $a-1 <1$ we know that  $ S^{\uparrow}$ is pure jump and so for any given $t >0$ we have
 \begin{eqnarray} \sum_{t_{i} \leq t} |\Delta S^{\uparrow}_{t_{i}}| \mathbf{1}_{ |\Delta S^{\uparrow}_{t_{i}}|> \varepsilon} \xrightarrow[ \varepsilon \to 0]{a.s.} 0.  \label{eq:petitsauts} \end{eqnarray}
It follows from the last display and the definitions of $ \mathcal{D}^{ (\varepsilon)} $ and $ \mathcal{D}$ that we have the following almost sure convergences in the sense of Skorokhod 
 \begin{eqnarray} ( \mathcal{D}^{( \varepsilon)}_{t})_{ t \in [0, \zeta_{1}^{( \varepsilon)})} \xrightarrow[ \varepsilon\to 0]{a.s.} ( \mathcal{D}_{t})_{t \in [0, \zeta_{1})}, \quad \zeta_{1}^{( \varepsilon)}  \xrightarrow[ \varepsilon\to 0]{a.s.} \zeta_{1}, \quad S^{\uparrow}_{\zeta_{1}^{( \varepsilon)}} \xrightarrow[ \varepsilon\to 0]{a.s.} S^{ \uparrow}_{\zeta_{1}}.   \label{eq:convcontinuous}\end{eqnarray}
Similarly, in the discrete setting we can use \eqref{eq:bounddeltaperi} to get that for any $\delta,t \geq 0$ we have
\[ \sup_{p \geq 1}  \mathbb{P}_{p}\left( \sum_{i=0}^{t \cdot p^{a-1} } \mathbf{1}_{|\Delta P_{i}|< \varepsilon p} |\Delta P_{i}|  >  \delta\, p\right) \xrightarrow[ \varepsilon \to 0]{} 0.\]
Using the fact that  $|\Delta D_{i}| \leq 1 + 2 |\Delta P_{i}| \mathbf{1}_{ \Delta P_{i} \leq 0}$ we consequently have 
\[ \sup_{p \geq 1}  \mathbb{P}_{p}\left( \sum_{i=0}^{t \cdot p^{a-1}} \mathbf{1}_{|\Delta D_{i}|< \varepsilon p} |\Delta D_{i}|  > \delta\, p\right) \xrightarrow[ \varepsilon \to 0]{} 0.\] 
It is then standard to combine the last two displays and the properties of $D$ and $ D^{( \varepsilon)}$ to deduce that for any $t \geq 0$, if $ \|\cdot\|$ denotes the Skorokhod distance between two functions over the time interval $[0,t)$ then we have 
  \begin{eqnarray} \label{eq:uniformp} \sup_{p \geq 1} \mathbb{P}_{p}\left(p^{-1}\left\|D^{( \varepsilon)}_{\cdot\, (p/ \mathsf{p}_{q})^{a-1}}-D_{\cdot\, (p/ \mathsf{p}_{q})^{a-1}} \right\| > \delta \right) \xrightarrow[ \varepsilon \to0]{} 0.  \end{eqnarray}
Now, combining \eqref{eq:uniformp}, \eqref{eq:convcontinuous} and \eqref{eq:convDPbis2} we can deduce the convergence in law of the first components in \eqref{eq:convDPbis}. The other joint convergences in law are derived similarly. We leave the details to the reader. \endproof
	
We now introduce the following key random variable
\[ \mathcal{Z} = \log ( S^\uparrow_{\zeta_{1}}).\]
\begin{lemma} \label{lem:EspZ>0} The expectation of $ \mathcal{Z}$ denoted by $ \mathsf{c}_{a}$ is (stricly) positive.
\end{lemma}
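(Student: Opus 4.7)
The plan is to establish the integrability of $\mathcal{Z}$ and then obtain the strict positivity $\mathbb{E}[\mathcal{Z}]>0$ via an i.i.d.\ random-walk argument based on the self-similarity of $S^\uparrow$.

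For integrability, I would control $\mathbb{E}[(\log S^\uparrow_{\zeta_1})^{\pm}]$ separately. The upper tail is handled by $S^\uparrow_{\zeta_1}\leq \sup_{t\leq \zeta_1}S^\uparrow_t$ together with a tail bound on $\zeta_1$ (which is dominated by the first passage above $1$ of the $(a-1)$-stable subordinator formed by the ``right''-labeled negative jumps of $S^\uparrow$), yielding $\mathbb{E}[(\log S^\uparrow_{\zeta_1})^+]<\infty$. The lower tail is more subtle: starting from the identity $S^\uparrow_{\zeta_1}=(S^\uparrow_{\zeta_1^-}-\mathcal{D}_{\zeta_1^-})+\mathcal{D}_{\zeta_1}$ (obtained from $|\Delta S^\uparrow_{\zeta_1}|=\mathcal{D}_{\zeta_1^-}-\mathcal{D}_{\zeta_1}$) and using the polynomial tail of the stable Lévy measure together with the joint law of the pre-jump triple $(S^\uparrow_{\zeta_1^-},\mathcal{D}_{\zeta_1^-},|\Delta S^\uparrow_{\zeta_1}|)$, I would establish a polynomial bound $\mathbb{P}(S^\uparrow_{\zeta_1}\leq \varepsilon)\leq C\varepsilon^\beta$ for some $\beta>0$, giving $\mathbb{E}[(\log S^\uparrow_{\zeta_1})^-]<\infty$.

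For strict positivity, the key observation is that $\zeta_n$ is an a.s.\ finite stopping time at which $\mathcal{D}$ is reset to $S^\uparrow_{\zeta_n}$. By the strong Markov property and the $1/(a-1)$-self-similarity of $S^\uparrow$, the rescaled pair $\bigl(\mathcal{D}_{\zeta_n+t(S^\uparrow_{\zeta_n})^{a-1}}/S^\uparrow_{\zeta_n},\,S^\uparrow_{\zeta_n+t(S^\uparrow_{\zeta_n})^{a-1}}/S^\uparrow_{\zeta_n}\bigr)_{t\geq 0}$ is distributed as $(\mathcal{D},S^\uparrow)$ started from $(1,1)$ and is independent of $\mathcal{F}_{\zeta_n}$. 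Consequently the increments $\log S^\uparrow_{\zeta_{n+1}}-\log S^\uparrow_{\zeta_n}$ are i.i.d.\ copies of $\mathcal{Z}$, and $(\log S^\uparrow_{\zeta_n})_{n\geq 0}$ is a random walk on $\mathbb{R}$. Then, using that $S^\uparrow$ is transient ($S^\uparrow_t\to\infty$ a.s., a standard property of stable processes conditioned to stay positive in our parameter range) together with $\zeta_n\to\infty$ a.s., one shows that $\log S^\uparrow_{\zeta_n}\to+\infty$; by the Chung--Fuchs theorem this is incompatible with $\mathbb{E}[\mathcal{Z}]\leq 0$, whence $\mathsf{c}_a=\mathbb{E}[\mathcal{Z}]>0$.

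The delicate point is showing $\log S^\uparrow_{\zeta_n}\to+\infty$ a.s.: the $\zeta_n$ are precisely the right-negative jump times where $\mathcal{D}$ is depleted, so they could in principle correspond to small values of $S^\uparrow$, and the convergence of $S^\uparrow$ along these special stopping times does not follow immediately from the deterministic-time statement. A safer alternative, should this estimate prove too delicate, is to bypass the Chung--Fuchs step and prove a quantitative lower bound $\mathbb{P}(\mathcal{Z}\geq c)\geq p>0$ for explicit constants $c,p$, for instance by isolating the event that $S^\uparrow$ makes an early positive jump of size $\geq 2$ before any sufficiently large right-negative jump affects $\mathcal{D}$, and then combining this with the integrability bound on $\mathcal{Z}^-$ to directly conclude $\mathbb{E}[\mathcal{Z}]>0$.
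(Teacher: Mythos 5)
Your core argument is the same as the paper's: by the strong Markov property and the $1/(a-1)$-self-similarity of $S^\uparrow$ at the reset times, $(\log S^\uparrow_{\zeta_k})_{k\geq 0}$ is a random walk with step law $\mathcal{Z}$; it tends to $+\infty$; hence its step has positive mean. The paper closes this last implication with Fatou's lemma applied to $\mathbb{E}[\mathcal{Z}]=\frac1k\mathbb{E}[\log S^\uparrow_{\zeta_k}]$ rather than Chung--Fuchs, but that is cosmetic. The substantive differences are in the supporting estimates, and in both cases the paper's route is lighter than yours. For the integrability of $\mathcal{Z}^-$ (the only integrability needed here), instead of your analysis of the pre-jump triple at $\zeta_1$, the paper bounds all the $S^\uparrow_{\zeta_k}$ simultaneously from below by the overall infimum $\underline{S}^\uparrow_\infty$ and uses the $h$-transform with $h(x)=\sqrt{x}$ to get $\mathbb{P}(\underline{S}^\uparrow_\infty\leq\varepsilon)\leq\sqrt{\varepsilon}$ as in \eqref{eq:tailoverallinfimum}; this uniform integrable minorant is also exactly what justifies the Fatou step. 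You do not need $\mathbb{E}[\mathcal{Z}^+]<\infty$ for this lemma: the paper explicitly allows $\mathsf{c}_a=\infty$ at this stage and only rules it out afterwards via the discrete estimate of Lemma \ref{lem:estimtech}, so that part of your plan can be dropped. Finally, the point you flag as delicate is not: since $a<2$ the stable process has summable jumps on compact intervals, and each cycle of $\mathcal{D}$ consumes at least $S^\uparrow_{\zeta_k}$ of negative jump mass, so the $\zeta_k$ cannot accumulate and $\zeta_k\to\infty$ a.s.; combined with $S^\uparrow_t\to+\infty$ a.s.\ (a genuine limit, not a $\limsup$), this gives $S^\uparrow_{\zeta_k}\to\infty$ along these stopping times with no further control, so your fallback via an explicit lower bound on $\mathbb{P}(\mathcal{Z}\geq c)$ is unnecessary.
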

\proof By the Markov property and the scale invariance property of the process $ (S^\uparrow)$ used in the construction of $ \mathcal{D}$ it is easy to see that conditionally on the past information up to time $\zeta_{k}$ we have 
 \begin{eqnarray} \label{eq:markovfort}(\zeta_{k+1}-\zeta_{k}, S^{\uparrow}_{\zeta_{k+1}})  & \overset{(d)}{=} & ( (S^{\uparrow}_{\zeta_{k}})^{1/(a-1)}\cdot \tilde{\zeta}_{1}, S^{\uparrow}_{\zeta_{k}} \cdot \tilde{S}^{\uparrow}_{\zeta_{1}}), \end{eqnarray} where the process $(\tilde{\zeta}, \tilde{ \mathcal{D}},\tilde{S}^{\uparrow})$ is an independent copy of $(\zeta, \mathcal{D}, S^{\uparrow})$. In particular for any $k \geq 1$ the random variable $ \log (S_{\zeta_{k}}^\uparrow)$ is obtained by summing $k$ independent copies of the variable $  \log(S_{\zeta_{1}}^\uparrow)$. Hence we have 
 \begin{eqnarray} \label{eq:egalite1/k} \mathbb{E}[ \mathcal{Z}]=\mathbb{E}[\log(S_{\zeta_{1}}^\uparrow)] = \frac{1}{k} \mathbb{E}[\log(S_{\zeta_{k}}^\uparrow)].  \end{eqnarray}
Now, when $k \to \infty$, using \eqref{eq:markovfort} and the fact that $S^{\uparrow}$ remains positive, it is any easy matter to see that $\zeta_{k} \to \infty$ hence $S_{\zeta_{k}}^\uparrow \to \infty$ and $ \log(S_{\zeta_{k}}^\uparrow) \to \infty$ as well. On the other hand, $\log(S_{\zeta_{k}}^\uparrow)$ is obviously bounded from below by the logarithm of the overall infimum $ \underline{S}^\uparrow_{\infty} = \inf \{ S_{t}^\uparrow : t \geq 0\}$ of the process $ (S^\uparrow_{t})_{t \geq0}$. Since $S^\uparrow$ is the $h$-transform of the process $S$ for the function $h(x)= \sqrt{x}$ it follows that  for any $ \varepsilon >0$ if $ T_{ \varepsilon}(X) = \inf \{ t \geq 0 : X_{t} \leq \varepsilon\}$ then we have 
 \begin{eqnarray} \label{eq:tailoverallinfimum} \mathbb{P}( \underline{S}^\uparrow_{\infty} \leq \varepsilon) = \mathbb{P}( T_{ \varepsilon}( S^\uparrow) < \infty) = \mathbb{E}[ h( S_{T_{ \varepsilon}}) \mathbf{1}_{ T_{ \varepsilon}(S) < \infty} \mathbf{1}_{ S_{t} \geq 0, \forall 0 \leq t \leq T_{ \varepsilon}(S)}] \leq \sqrt{ \varepsilon},  \end{eqnarray} from which one deduces that $\log(\underline{S}_{\infty}^\uparrow)$ is integrable. Using all these ingredients we can apply  Fatou's lemma  and get \[\liminf_{k \to \infty} \mathbb{E}[\log(S_{\zeta_{k}}^\uparrow)] \geq  \mathbb{E}\left[ \liminf_{k \to \infty} \log(S_{\zeta_{k}}^\uparrow)\right] = \infty.\]
It follows from the last display and \eqref{eq:egalite1/k} that for some $k_{0} \geq 1$ we have $  \mathbb{E}[ \mathcal{Z}]=\mathbb{E}[\log(S_{\zeta_{k_{0}}}^\uparrow)]/k_{0}>0$ as wanted. (Notice that at this point it could be that $ \mathsf{c}_{a}= \infty$ but this will be ruled out in the next proof). \endproof 

\proof[Proof of Theorem \ref{thm:dense}] By Proposition \ref{prop:cvZ} we have the convergence  $\log(p^{-1} P_{\theta_{1}}) \to \mathcal{Z}$ in distribution under $ \mathbb{P}_{p}$ as $p \to \infty$ and on the other hand  Lemma \ref{lem:estimtech} implies that the laws of $\log(P_{\theta_{1}}/p)$ under $ \mathbb{P}_{p}$ are uniformly integrable for $p\geq 1$. It follows that  \begin{eqnarray} \label{eq:cvversint} \mathbb{E}_{p}\left[ \log\left(\frac{P_{\theta_{1}}}{p}\right)\right]  \xrightarrow[p\to\infty]{} \mathbb{E}[ \mathcal{Z}] = \mathsf{c}_{a},  \end{eqnarray} and in the same time we deduce that $ \mathsf{c}_{a}$ is finite (and positive thanks to Lemma \ref{lem:EspZ>0}). We are now in position to prove a law of large numbers for  $\log( P_{\theta_{r}})$ under $\mathbb{P}_{1}$. Denote $ (\mathcal{F}_{n})_{n \geq 0}$ the filtration generated by the peeling exploration and recall that the law of $P_{\theta_{r+1}}$ under $ \mathbb{P}_{1}( \cdot \mid  \mathcal{F}_{\theta_{r}})$ is that of $\tilde{P}_{\tilde\theta_{1}}$ under $ \tilde{\mathbb{P}}_{\theta_{r}}$ where the $\sim$ means that this is a new sampling of the process. For $r \geq 1$ large we evaluate
 \begin{eqnarray} \label{eq:lgn1} \mathbb{E}_{1} \left[ \left( \log( P_{\theta_{r}}/P_{\theta_{0}}) - r \mathsf{c}_{a}\right)^2 \right] = \sum_{ 1 \leq i,j \leq r} \mathbb{E}\left[ \left(\log\left( \frac{P_{\theta_{i}}}{P_{\theta_{i-1}}}\right) - \mathsf{c}_{a}	\right)\left(\log\left( \frac{P_{\theta_{j}}}{P_{\theta_{j-1}}}\right) - \mathsf{c}_{a}	\right)	 \right].  \end{eqnarray}
The terms where $i=j$ are bounded above by some constant according to Lemma \ref{lem:estimtech}. For the other terms when $i <j$ we condition on $ \mathcal{F}_{\theta_{j-1}}$ and use the above remark to get that 
\begin{align*}\mathbb{E}\Bigg[ \left(\log\left( \frac{P_{\theta_{i}}}{P_{\theta_{i-1}}}\right) - \mathsf{c}_{a}	\right)&\left(\log\left( \frac{P_{\theta_{j}}}{P_{\theta_{j-1}}}\right) - \mathsf{c}_{a}	\right)	 \Bigg] \\&= \mathbb{E}\left[ \left(\log\left( \frac{P_{\theta_{i}}}{P_{\theta_{i-1}}}\right) - \mathsf{c}_{a}	\right) \tilde{\mathbb{E}}_{P_{\theta_{j-1}}}\left[\log\left( \frac{\tilde{P}_{\tilde{\theta}_{1}}}{P_{\theta_{j-1}}}\right) - \mathsf{c}_{a}	\right]	 \right],
\end{align*} where independently of the previous exploration, under $\tilde{\mathbb{P}}_{p}$ the random variable $\tilde{P}_{\tilde{\theta}_{1}}$ is distributed as $P_{\theta_{1}}$ under $ \mathbb{P}_{p}$.
Since we have $P_{\theta_{j}} \to \infty$ by the transience of the process $(P)$ it follows from \eqref{eq:cvversint} that \[ \tilde{\mathbb{E}}_{P_{\theta_{j-1}}}\left[\log\left( \tilde{P}_{\tilde{\theta}_1}/P_{\theta_{j-1}}\right) - \mathsf{c}_{a}	\right] \xrightarrow[j\to\infty]{a.s.} 0.\] Conditioning with respect to $ \mathcal{F}_{\theta_{i}-1}$ and using one more time the uniform integrability of the variables $\log(P_{\theta_{1}}/p)$ under $ \mathbb{P}_{p}$ we deduce that the off-diagonal terms in \eqref{eq:lgn1} go to $0$ as $i,j \to \infty$. Consequently by Cesaro's summation we have 

\[ \mathbb{E}_{1} \left[ \left( \log( P_{\theta_{r}}/P_{\theta_{0}}) - r \mathsf{c}_{a}\right)^2 \right] = o(r^2) \quad \mbox{ as }r \to \infty.\]
By Markov's inequality, this proves that $r^{-1} \log(P_{\theta_{r}}) \to \mathsf{c}_{a}$ in probability as desired in Theorem \ref{thm:dense}. The second point of the theorem follows from the first point and Lemma \ref{lem:logbehavior} below. \endproof

Recall that  the perimeter $\partial | \overline{\mathrm{Ball}}_{r}(B_{\infty})|$ is defined is terms of number of (dual) edges. It may thus be that the perimeter in terms of number of vertices on the boundary of $B_{\infty}^\dagger$ is much smaller. Whereas they are both of the same order in the dilute case (but we do not prove it), this is far from being true in the dense case since for $a \in (3/2;2)$ the random map $B_{\infty}^\dagger$ contains infinitely many cut vertices separating the origin from infinity almost surely.

\proof[{Sketch of proof}.] We will show that when doing the peeling process with algorithm $ \mathcal{L}^\dagger$, then independently of the past exploration, there is a positive probability bounded away from $0$ that within the next two consecutive layers of $B_{\infty}^\dagger$ we create a cut point (i.e.~a face of $B_{\infty}$ which is folded on itself and separates the origin from infinity in $B_{\infty}$). This proves that indeed there are infinitely many cut-points in $B_{\infty}^\dagger$. Fix $r \geq 0$ and assume that $P_{\theta_{r}}=p$. We claim that with a probability which is bounded from below independently of $p$
\begin{itemize}
\item during the construction of the $(r+1)$th layer a face $\mathbf{f}$ of degree of order $p$ is created  which contributes to a fraction say at least $1/3$ of $P_{\theta_{r+1}}$,
\item during the construction of the $(r+2)$th layer, two edges of $ \mathbf{f}$ are identified in such a way that the origin and infinity are separated in $B_{\infty}^\dagger$ by $ \mathbf{f}$, thereby creating the desired cut point.
\end{itemize}
\begin{figure}[!h]
 \begin{center}
 \includegraphics[width=10cm]{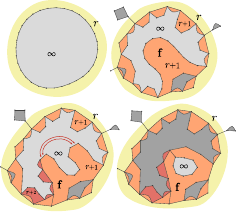}
 \caption{ \label{fig:cutpoint1}Creation of a cut point during the construction of two consecutive layers.}
 \end{center}
 \end{figure}

We leave it to the reader to translate the above recipe in terms of the process $P$ and $D$ and to use the above scaling limit given by Proposition \ref{prop:cvZ} to see that such a scenario indeed has a positive probability to happen independently of $p$. We refer to Fig.\,\ref{fig:cutpoint1} and Fig.\,\ref{fig:cutpoint2} for a pictorial description. 
 \endproof 
\begin{figure}[!h]
 \begin{center}
 \includegraphics[width=10cm]{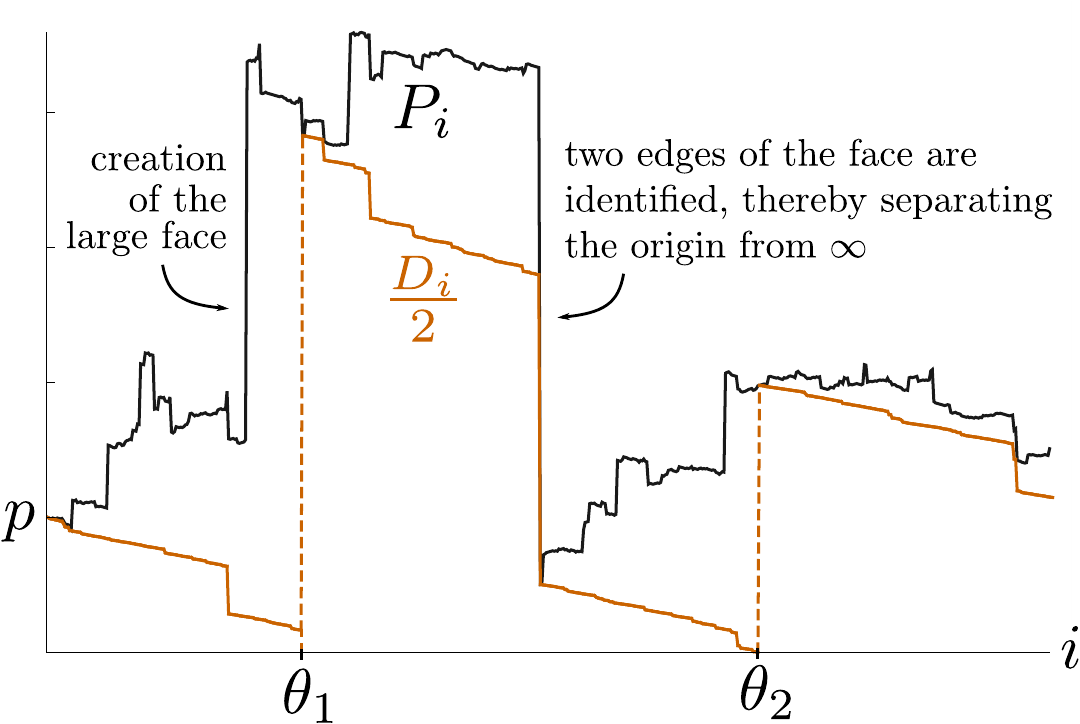}
 \caption{ \label{fig:cutpoint2} Transcription of the event of Fig.\,\ref{fig:cutpoint1} in terms of the coding processes $P$ and $D$. It is implicitly assumed that the event corresponding to the big negative jump identifies two edges that are incident to the face created in the big positive jump event.}
 \end{center}
 \end{figure}

\subsubsection{Proof of the technical estimates}
\label{sec:technics}

\begin{lemma} \label{lem:estimtech} We have
\[\sup_{p \geq 1} \mathbb{E}_{p}\left[\log^2\left( \frac{P_{\theta_{1}}}{p} \right)\right] < \infty.\]
\end{lemma}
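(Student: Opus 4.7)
The plan is to establish uniform polynomial tail bounds on $P_{\theta_1}/p$ on both sides, and then use the identity
\[
\mathbb{E}_{p}\bigl[\log^2(P_{\theta_1}/p)\bigr] = \int_{0}^\infty 2t \Bigl( \mathbb{P}_{p}(P_{\theta_1} > p\,e^t) + \mathbb{P}_{p}(P_{\theta_1} < p\,e^{-t}) \Bigr)\, \mathrm{d}t
\]
together with $\int_0^\infty t e^{-\gamma t}\,\mathrm{d}t < \infty$ to conclude. Concretely, it suffices to find constants $\gamma > 0$ and $C < \infty$, independent of $p \geq 1$, such that $\mathbb{P}_{p}(P_{\theta_1} > Mp) + \mathbb{P}_{p}(P_{\theta_1} < p/M) \leq C M^{-\gamma}$ for every $M \geq 1$.

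For the lower tail, I use the trivial inclusion $\{P_{\theta_1} \leq L\} \subseteq \{\inf_{i \geq 0} P_i \leq L\}$. Applying optional stopping to the nonnegative martingale $(h^\uparrow(W_i) \mathbf{1}_{W_j > 0 \, \forall j \leq i})_i$ at the entry time of $W$ into $\llbracket 1, L \rrbracket$, and using $h^\uparrow(\ell) \leq 2\sqrt{\ell}$, one obtains as in \eqref{eq:tooltransient}
\[
\mathbb{P}_{p}(\inf_{i \geq 0} P_i \leq L) \leq \frac{h^\uparrow(L)}{h^\uparrow(p)} \leq C \sqrt{L/p},
\]
which with $L = p/M$ gives $\mathbb{P}_{p}(P_{\theta_1} \leq p/M) \leq C M^{-1/2}$, the required lower-tail bound.

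The upper tail is more delicate and combines two estimates. First, a Foster--Lyapunov drift argument on the auxiliary process $D$ yields $\mathbb{E}_{p}[\theta_1] \leq C p^{a-1}$: a term-by-term analysis of \eqref{transi-dual} shows that in the dense phase the expected decrease of $D$ satisfies $-\mathbb{E}[\Delta D_i \mid D_i = \ell, P_i = q] \geq c\, \ell^{2-a}$ for some $c > 0$ uniform in $(\ell, q)$, the dominant contribution coming from identification events of size of order $\ell$ occurring with probability of order $\ell^{1-a}$. The concave function $F(\ell) := \ell^{a-1}/(c(a-1))$ then makes $i + F(D_i)$ a supermartingale up to $\theta_1$, and since $\theta_1 \leq 2p$ is bounded, optional stopping gives $\mathbb{E}_{p}[\theta_1] \leq F(2p) \leq C p^{a-1}$, whence Markov's inequality yields $\mathbb{P}_{p}(\theta_1 > Bp^{a-1}) \leq C/B$. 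Second, for $\sup_{i \leq Bp^{a-1}} P_i$ I apply the Doob $h^\uparrow$-transform and optional stopping of $(h^\uparrow(W_i) \mathbf{1}_{W > 0})_i$ at $T^W_{\geq K} \wedge Bp^{a-1}$, where $T^W_{\geq K}$ denotes the hitting time of $\{K, K+1, \ldots\}$ by the unconditioned walk $W$. Controlling the overshoot via the polynomial tail $\nu(k) \leq C\, k^{-a}$ then gives, for $K = Mp$,
\[
\mathbb{P}_{p}\Bigl(\sup_{i \leq Bp^{a-1}} P_i \geq Mp\Bigr) \leq C\, B\, M^{3/2 - a},
\]
a genuine decay since $a > 3/2$. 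Combining these via the union bound $\mathbb{P}_{p}(P_{\theta_1} > Mp) \leq \mathbb{P}_{p}(\theta_1 > Bp^{a-1}) + \mathbb{P}_{p}(\sup_{i \leq Bp^{a-1}} P_i \geq Mp)$ and balancing the two terms by setting $B := M^{(a - 3/2)/2}$ yields $\mathbb{P}_{p}(P_{\theta_1} > Mp) \leq C\, M^{-(a-3/2)/2}$, as desired.

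The main technical obstacle will be the drift lower bound $-\mathbb{E}[\Delta D \mid D = \ell, P = q] \geq c\, \ell^{2-a}$ with $c > 0$ uniform in $(\ell, q)$: the drift splits into five contributions corresponding to the cases of \eqref{transi-dual}, and a careful case-by-case estimate using the transition densities \eqref{eq:qp}--\eqref{eq:qp2} together with the asymptotic $\nu(-k) \sim c\,k^{-a}$ is needed to extract the decisive scaling uniformly, in particular in the regime where $D$ is comparable to $2P$ and the contributions of identification events that close the layer must also be carefully accounted for.
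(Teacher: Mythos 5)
Your argument is correct in outline and reaches the right tail bounds, but it travels a genuinely different road from the paper's in the two places that matter. For the lower tail you do exactly what the paper does (bound $P_{\theta_{1}}$ below by the overall infimum of $P$ and use the $h^\uparrow$-transform to get $\mathbb{P}_{p}(\underline{P}_{\infty}\leq p')\leq C\sqrt{p'/p}$). For the upper tail, the paper first proves an \emph{exponential} tail $\mathbb{P}_{p}(\theta_{1}\geq k\lfloor p^{a-1}\rfloor)\leq e^{-ck}$ by exhibiting, in each time window of length $p^{a-1}$, a jump pattern of uniformly positive probability that closes the layer, and then assembles the estimate by Cauchy--Schwarz against the crude moment bound $\mathbb{E}_{p}[\log^{4}(\overline{P}_{k\lfloor p^{a-1}\rfloor}/p)]\leq C'k$, itself obtained from a maximal inequality plus the one-jump principle for the unconditioned walk. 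You instead prove only the first moment $\mathbb{E}_{p}[\theta_{1}]\leq Cp^{a-1}$ via a Foster--Lyapunov argument on $D$, which yields a polynomial tail for $\theta_{1}$, and you compensate by optimizing the cutoff $B$ in the union bound; your supremum estimate $CBM^{3/2-a}$ is the same bound as the paper's, reached by optional stopping of $h^\uparrow(W)$ rather than by their maximal inequality. Your route is more self-contained and the drift bound $-\mathbb{E}[\Delta D\mid D=\ell,P=q]\geq c\,\ell^{2-a}$ is genuinely available (the identifications of size $k\asymp\ell$ on the side of the current layer carry probability $\asymp\nu(-k)$ with $h^\uparrow(q-k)/h^\uparrow(q)$ bounded below for $k\leq\ell/4\leq q/2$), but two details deserve care: first, the optional-stopping step silently uses that $D$ decreases by at least one at every peeling step within a layer (which is what makes $\theta_{1}\leq 2p$ true and also covers the small-$\ell$ regime of the drift bound), and the layer-completing transitions, where $D$ jumps back up to $2P_{\theta_{1}}$, must be excluded from the supermartingale (e.g.\ by killing $D$ at $\theta_{1}$); second, the overshoot term $\mathbb{E}_{p}[\sqrt{W_{T_{\geq K}}-K}\,\mathbf{1}_{T_{\geq K}\leq n}]$ is not harmless and should be split according to whether the overshoot exceeds $K$, the large part being controlled by $n\,K^{3/2-a}$ via the tail of $\nu$ and the small part absorbed into $\sqrt{K}\,\mathbb{P}_{p}(T_{\geq K}\leq n)$. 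With those two points made precise, your proof goes through; the paper's version buys a cleaner assembly (exponential tail makes Cauchy--Schwarz painless), while yours buys the sharp estimate $\mathbb{E}_{p}[\theta_{1}]\leq Cp^{a-1}$, which is of independent interest and consistent with Proposition~\ref{prop:cvZ}.
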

\proof We first claim that the tail of $\theta_{1}$ under $ \mathbb{P}_{p}$ is exponential in the scale $p^{a-1}$, in other words
 \begin{eqnarray} \label{eq:expotailtau} \mathbb{P}_{p}(\theta_{1} \geq k \lfloor p^{a-1} \rfloor) \leq e^{-ck},  \end{eqnarray} for all $k \geq 1$ for some constant $c>0$ independent of $p$. The reason is the following. Suppose that $\theta_{1} \geq k p^{a-1}$, then we claim that during the time interval $[k \lfloor p^{a-1} \rfloor, (k+1) \lfloor p^{a-1} \rfloor]$  the process $(P)$ has a positive probability (independent of $p$ and $k$) to make a negative jump of size at least $p$ during which the peeling by layer process swallows at least $2p$ edges on its left. When doing so, one must necessarily complete the first layer since there are less than $2p$ edges initially at height $0$ to discover (and this number can only decrease). This easily implies (\ref{eq:expotailtau}). 
 
 To see that within a time interval of length $p^{a-1}$ the process $P$ can indeed produce a negative jump of size at least $p$ with a probability bounded away from $0$ we proceed as follows: we first produce a positive jump of size about $2p$ followed within the time interval by a negative jump of size larger than $p$. Using the explicit probability transitions for the process $P$ it is easy to see that the probability of this event is bounded away from $0$ uniformly in $p$ and in $P_{0}$. 
 
 Once we have \eqref{eq:expotailtau} in hand we first write
  \begin{eqnarray} \label{eq:splitinfsup}\mathbb{E}_{p}\left[\log^2\left( \frac{P_{\theta_{1}}}{p}\right) \right] \leq \mathbb{E}_{p}\left[\log^2\left( \frac{\underline{P}_{\theta_{1}}}{p}\right) \right] + \mathbb{E}_{p}\left[\log^2\left( \frac{\overline{P}_{\theta_{1}}}{p} \right)\right], \end{eqnarray} where $\underline{P}_{k} = \inf\{ P_{i} : 0 \leq i \leq k\}$ and $\overline{P}_{k}= \sup\{ P_{i} : 0 \leq i \leq k\}$ are the corresponding running infimum and running supremum of the process $P$. We easily take care of the first term, since $\underline{P}_{\theta_{1}}$ is bounded from below by $\underline{P}_{\infty}$ the overall infimum of $P$: a calculation similar to that of \eqref{eq:tailoverallinfimum} shows that for any  $1 \leq p' \leq p$ we have
  \[ \mathbb{P}_{p}( \underline{P}_{\infty} \leq  p') \leq C \sqrt{\frac{p'}{p}},\] for a constant $C>0$ independent of $p$ and $p'$. It follows from this  that \[ \sup_{p \geq 1} \mathbb{E}_{p}[\log^2( \underline{P}_{\infty}/p) ] < \infty\] and so $ \sup_{p \geq 1} \mathbb{E}_{p}[\log^2( \underline{P}_{\theta_{1}}/p) ] < \infty$. Let us move to the second term on the right-hand side of \eqref{eq:splitinfsup}. By splitting according to the values of $ \theta_{1}$ and applying Cauchy-Schwarz inequality we have  \begin{eqnarray} \mathbb{E}_{p}[\log^2( \overline{P}_{\theta_{1}}/p) ]&\leq& \sum_{ k \geq 1} \mathbb{E}_{p}\left[\log^2\left( \overline{P}_{k \lfloor p^{a-1} \rfloor}/p\right) \mathbf{1}_{\theta_{1} \in  [(k-1) \lfloor p^{a-1} \rfloor, k \lfloor p^{a-1} \rfloor)}\right] \nonumber \\ &\leq& 
 \sum_{ k \geq 1} \sqrt{\mathbb{E}_{p}[\log^4( \overline{P}_{k \lfloor p^{a-1} \rfloor}/p)]\cdot  \mathbb{P}_{p}(\theta_{1} \geq (k-1) \lfloor p^{a-1} \rfloor)}.   \label{eq:apresCS}  \end{eqnarray}
We will show below the rough estimate   \begin{eqnarray} \label{eq:rough} 
\mathbb{E}_{p}[\log^4( \overline{P}_{k \lfloor p^{a-1} \rfloor}/p)] \leq C'\, k  \end{eqnarray}
  for some $C'>0$ (independent of $k$ and $p$ but which may depend on $a \in (3/2;2)$) which combined with \eqref{eq:expotailtau} will show that $\sup_{p \geq 1} \mathbb{E}_{p}[\log^2( \overline{P}_{\theta_{1}}/p) ]$ is bounded. This will finish the proof of the lemma. To this aim we look at the tail \[ \mathbb{P}_{p}(\overline{P}_{k \lfloor p^{a-1} \rfloor} > x p)\] for $x >0$ large. We first reduce the problem from $\overline{P}$ to $P$ by a classical maximal inequality: We suppose that $x >k^{1/(a-1)}$ and we claim that there is a universal constant $c>0$ (independent of $k\geq 1$, $x> k^{1/(a-1)}$ and $p$) such that we have 
 \begin{eqnarray} \mathbb{P}_{p}( \overline{P}_{k\lfloor p^{a-1} \rfloor}>2xp) \leq c \cdot \mathbb{P}_{p}( P_{k\lfloor p^{a-1} \rfloor} > xp). \label{eq:boundsupbynormal}  \end{eqnarray}
The reason is that if the process $P$ reaches a value larger than $xp$ before time $kp^{a-1}$ then afterwards it has a positive probability to stay within $(k\lfloor p^{a-1} \rfloor)^{1/(a-1)} \leq xp$ of this value until time $k\lfloor p^{a-1} \rfloor$. We then use the relation with the non-conditioned random walk $(W)$ to evaluate the tail  of $P_{k\lfloor p^{a-1} \rfloor}$:
 \begin{eqnarray}  \mathbb{P}_{p}( P_{k\lfloor p^{a-1} \rfloor} > xp) &=& \sum_{y > xp} \mathbb{P}_{p}( W_{k\lfloor p^{a-1} \rfloor} = y \mbox{ and } W_{i} \geq 1,\forall 0 \leq i \leq k\lfloor p^{a-1}\rfloor) \frac{h^\uparrow(y)}{h^\uparrow(p)}\nonumber \\
 &\leq & \sum_{y > xp} \mathbb{P}_{p}( W_{k\lfloor p^{a-1} \rfloor} = y) \frac{h^\uparrow(y)}{h^\uparrow(p)}.  \label{eq:tailinter}\end{eqnarray}
A well-known ``one-jump'' principle (see e.g.~\cite{DDS08}) tells us that when $y$ is large, the main contribution to $\mathbb{P}_{p}( W_{k\lfloor p^{a-1} \rfloor} = y)$ is given by those events where the walk $W$ has one increment of size approximately $y$. In our case, there exists a constant $C>0$  which may vary from line to line such that 
 \begin{eqnarray*} \mathbb{P}_{p}(W_{k\lfloor p^{a-1} \rfloor} = y) =\mathbb{P}_{0}(W_{k\lfloor p^{a-1} \rfloor} = y-p) &\leq& C \cdot k\lfloor p^{a-1} \rfloor \cdot \mathbb{P}( \Delta W = y-p)\\  &\leq& C kp^{a-1} y^{-a}. \end{eqnarray*} Plugging this into \eqref{eq:tailinter} and using the fact that $h^\uparrow(\ell)$ grows like $\sqrt{\ell}$  as $\ell \to \infty$ it follows that
 \begin{eqnarray*}  \mathbb{P}_{p}( P_{k\lfloor p^{a-1} \rfloor} > xp)  &\leq& C\cdot k x^{-a+3/2}.  \end{eqnarray*} Using the above estimate together with \eqref{eq:boundsupbynormal} an easy calculation yields the estimate \eqref{eq:rough}. \endproof

\begin{lemma} \label{lem:logbehavior} We have the following two almost sure convergences
\[ \frac{\log P_{n}}{\log n} \xrightarrow[n\to\infty]{a.s.} \frac{1}{a-1},\]
\[ \frac{\log V_{n}}{\log n} \xrightarrow[n\to\infty]{a.s.}  \frac{a-1/2}{a-1}.\]
\end{lemma}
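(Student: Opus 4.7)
My plan is to reduce both convergences to the layer-indexed almost sure asymptotics
\[
\log P_{\theta_r}/r \to \mathsf{c}_a \quad \text{and} \quad \log\theta_r/r\to (a-1)\mathsf{c}_a,
\]
then interpolate $n$ between consecutive times $\theta_{r-1}<n\leq\theta_r$ of the peeling by layers $\mathcal{L}^\dagger$.

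For the first of these I would strengthen the in-probability convergence proved in Theorem \ref{thm:dense} to an almost sure statement via a martingale strong law of large numbers applied to the increments $X_k := \log(P_{\theta_k}/P_{\theta_{k-1}})$. By the strong Markov property at time $\theta_{k-1}$, the conditional law of $X_k$ given $\mathcal{F}_{\theta_{k-1}}$ is that of $\log(P_{\theta_1}/p)$ under $\mathbb{P}_p$ at $p=P_{\theta_{k-1}}$. Lemma \ref{lem:estimtech} yields a uniform bound on the conditional second moment, and \eqref{eq:cvversint}, together with $P_{\theta_{k-1}}\to\infty$ almost surely, ensures the conditional mean converges to $\mathsf{c}_a$. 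Kolmogorov's SLLN applied to the martingale-difference part of $X_k$ plus Cesaro on the predictable part give $\log P_{\theta_r}/r\to\mathsf{c}_a$ almost surely.

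Next I establish $\log\theta_r/r\to (a-1)\mathsf{c}_a$ almost surely. The upper bound follows from the exponential tail \eqref{eq:expotailtau}: choosing $j=r$ gives $\mathbb{P}(\theta_r-\theta_{r-1}\geq r\, P_{\theta_{r-1}}^{a-1}\mid\mathcal{F}_{\theta_{r-1}})\leq e^{-cr}$, whence Borel-Cantelli combined with Step 1 delivers $\theta_r\leq r^2 e^{r(a-1)(\mathsf{c}_a+\epsilon)}$ eventually almost surely, for every $\epsilon>0$. The lower bound relies on the uniform polynomial lower-tail estimate $\mathbb{P}_p(\theta_1\leq \delta p^{a-1})\leq C\delta$ for large $p$, obtained by a truncation argument on the process $D$: the event $\theta_1\leq\delta p^{a-1}$ requires all $2p$ boundary edges at the starting height to be consumed within at most $\delta p^{a-1}$ peeling steps, Markov's inequality combined with the tail $\nu(-k)\sim k^{-a}$ bounds the contribution of jumps of absolute value at most $p$ by $C\delta$, and a union bound with per-step probability $\asymp p^{-(a-1)}$ handles any jump of absolute value exceeding $p$. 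Applying this with $\delta=e^{-\epsilon r}$ and summing via Borel-Cantelli forces $\theta_r-\theta_{r-1}\geq e^{-\epsilon r}P_{\theta_{r-1}}^{a-1}$ eventually almost surely, which combined with Step 1 gives $\liminf\log\theta_r/r\geq(a-1)\mathsf{c}_a$. This step is the main obstacle: upgrading the in-distribution control of Proposition \ref{prop:cvZ} to a uniform-in-$p$ polynomial lower-tail bound on $\theta_1/p^{a-1}$ requires this discrete analysis rather than the scaling limit.

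The first statement of the lemma now follows by interpolating $n\in[\theta_{r-1},\theta_r]$: Lemma \ref{lem:estimtech} applied to both $\log^2(\overline{P}_{\theta_1}/p)$ and $\log^2(\underline{P}_{\theta_1}/p)$, combined with Markov's inequality and Borel-Cantelli, gives $\max_{\theta_{r-1}\leq n\leq\theta_r}|\log P_n-\log P_{\theta_{r-1}}|=o(r)$ almost surely. Together with the preceding steps this yields $\log P_n=r\mathsf{c}_a(1+o(1))$ and $\log n=r(a-1)\mathsf{c}_a(1+o(1))$ for all $n\in[\theta_{r-1},\theta_r]$, whence $\log P_n/\log n\to 1/(a-1)$ almost surely. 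For the second statement I exploit the monotonicity of $n\mapsto V_n$, which traps $V_n\in[V_{\theta_{r-1}},V_{\theta_r}]$ on each layer, reducing matters to $\log V_{\theta_r}/r\to(a-1/2)\mathsf{c}_a$ almost surely. This I would obtain by the same martingale SLLN scheme as in Step 1, applied to $\log(V_{\theta_r}/V_{\theta_{r-1}})$ for $r$ large enough that $V_{\theta_{r-1}}>0$; the conditional mean converges to $(a-1/2)\mathsf{c}_a$ using the scaling $\mathbb{E}_p[V_{\theta_1}]\asymp p^{a-1/2}$ for large $p$, itself derivable from Proposition \ref{prop:volume} applied to the unexplored Boltzmann region at time $\theta_{r-1}$.
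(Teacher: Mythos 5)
Your route is genuinely different from the paper's. The paper proves Lemma \ref{lem:logbehavior} without any reference to the layer structure: it writes $W^\uparrow$ via Tanaka's construction \cite{Tan89} as a concatenation of i.i.d.\ time-and-space reversed excursions, computes the tails of the excursion length and terminal value (both regularly varying with index $a-\tfrac32$), and applies the elementary fact that for i.i.d.\ nonnegative summands with $\mathbb{P}(X>x)\asymp x^{-1/\alpha}$ one has $\log S_n/\log n\to\alpha$ a.s.; the volume statement then follows from a stochastic domination of $(V_n)$ by a walk with i.i.d.\ heavy-tailed increments (upper bound) and from the largest single jump (lower bound). Your Steps 1--3 instead go through $\theta_r$, and they are sound: the martingale strengthening of \eqref{eq:cvversint} via Lemma \ref{lem:estimtech}, the upper bound on $\theta_r$ from \eqref{eq:expotailtau}, and the uniform lower-tail bound $\mathbb{P}_p(\theta_1\le\delta p^{a-1})\le C\delta$ --- which you correctly single out as the new ingredient --- all check out (the truncation at level $p$ together with \eqref{eq:bounddeltaperi} and $|\Delta D_i|\le 1+2|\Delta P_i|\mathbf{1}_{\Delta P_i<0}$ does give the claimed bound). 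What your approach buys is an almost sure version of the first convergence in Theorem \ref{thm:dense}; what it costs is dense-phase-specific machinery, whereas the paper's excursion argument is a self-contained random-walk statement.

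There is, however, a gap in your volume step. You propose to run the same martingale SLLN on $\log(V_{\theta_r}/V_{\theta_{r-1}})$ with conditional mean tending to $(a-\tfrac12)\mathsf{c}_a$. But $\mathbb{E}\bigl[\log(V_{\theta_r}/V_{\theta_{r-1}})\mid\mathcal{F}_{\theta_{r-1}}\bigr]=\mathbb{E}\bigl[\log\bigl(1+\Delta_r V/V_{\theta_{r-1}}\bigr)\mid\mathcal{F}_{\theta_{r-1}}\bigr]$, and while the conditional law of $\Delta_r V:=V_{\theta_r}-V_{\theta_{r-1}}$ depends only on $P_{\theta_{r-1}}$, the increment of the logarithm depends on the ratio $\Delta_r V/V_{\theta_{r-1}}$, i.e.\ on how $V_{\theta_{r-1}}$ compares to $P_{\theta_{r-1}}^{a-1/2}$ --- which is precisely the relation you are trying to establish. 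So the conditional mean is not a function of $P_{\theta_{r-1}}$ alone and there is no a priori reason it converges to $(a-\tfrac12)\mathsf{c}_a$; the scheme is circular as written. The repair is to bound $V_{\theta_r}$ directly rather than its logarithmic increments: for the upper bound, $V_{\theta_r}=\sum_{j\le r}\Delta_j V$ with $\mathbb{E}_p[V_{\theta_1}]\le Cp^{a-1/2}$ (the truncated computation $\sum_{k\le p}k^{a-1/2}\nu(-k)\asymp p^{1/2}$ over $\asymp p^{a-1}$ steps), Markov's inequality with threshold $e^{\varepsilon j}P_{\theta_{j-1}}^{a-1/2}$ and Borel--Cantelli; for the lower bound, use that with probability bounded away from $0$ uniformly in $p$ a single layer swallows a Boltzmann map of perimeter of order $p$ whose volume is at least $\delta p^{a-1/2}$ by Proposition \ref{prop:volume}, so that $V_{\theta_r}$ dominates its largest such contribution. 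This is in substance the argument the paper itself uses for $(V_n)$.
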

\proof The estimates of the first point of the lemma could be proved by bare hand calculations as those presented in the last lemma, however we chose a different and perhaps lighter route using Tanaka's construction of the walk $W^\uparrow$ conditioned  to stay positive \cite{Tan89}. To start with, let $ \mathrm{Exc}$ be the time and space reversal of a negative excursion of $W$:
\[  \mathrm{Exc} = ( 0, W_{\sigma}-W_{\sigma-1}, W_{\sigma}-W_{\sigma-2}, \ldots, W_{\sigma}-W_{1}, W_{\sigma})\] where $\sigma = \inf \{ k \geq 0 : W_{k} >0\}$. One then considers independent copies $ \mathrm{Exc}_{1}, \mathrm{Exc}_{2}, \ldots$ of $ \mathrm{Exc}$ which we concatenate together to get an infinite walk. Tanaka \cite{Tan89} proved that the process obtained has the law of $W^\uparrow$ (but started from $0$ and conditioned not to touch $ \mathbb{Z}_{<0}$). 

\begin{figure}[!h]
 \begin{center}
 \includegraphics[width=14cm]{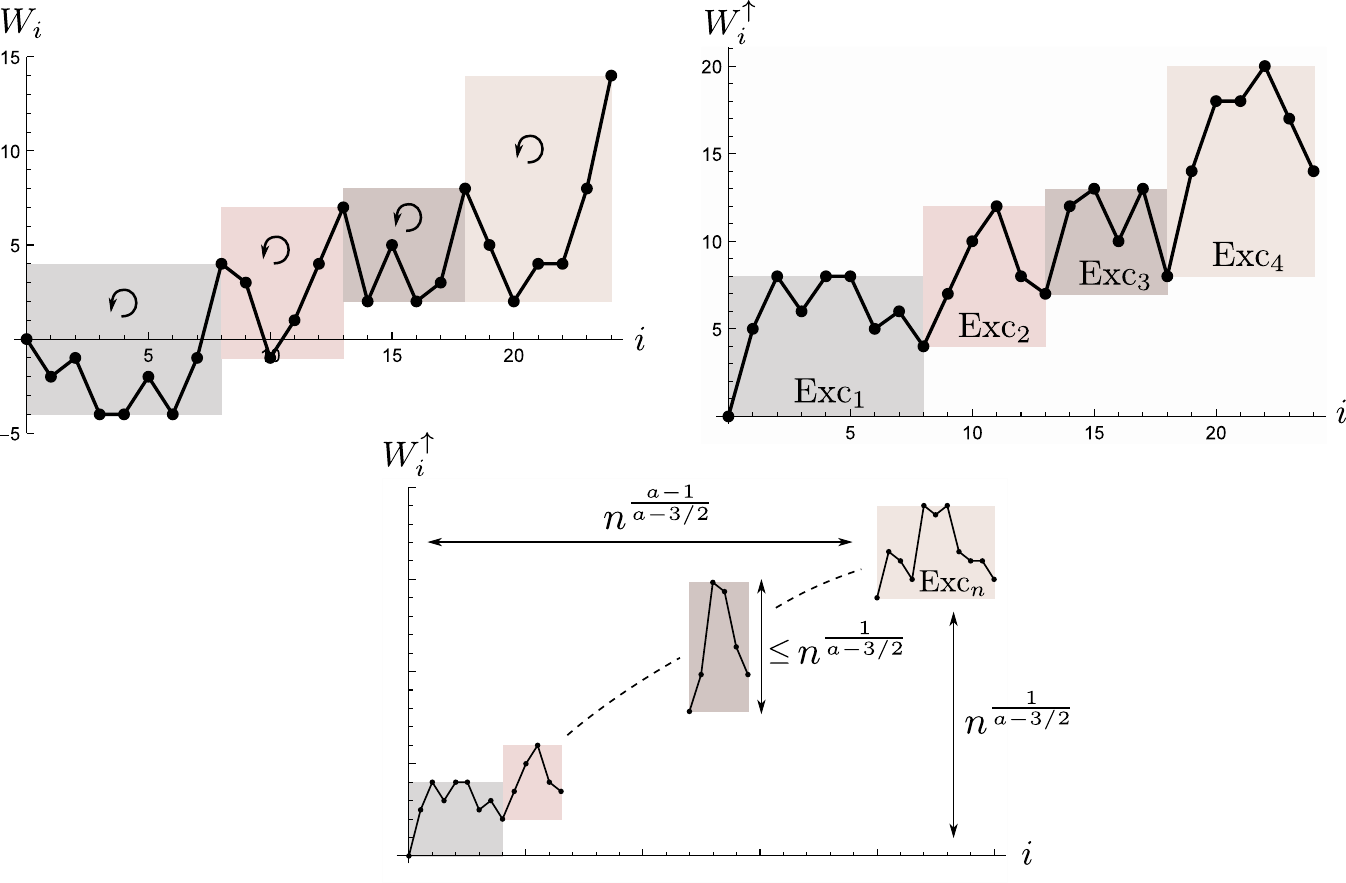}
 \caption{ \label{fig:tanaka} Illustration of Tanaka's construction of the walk $W^\uparrow$.}
 \end{center}
 \end{figure} We recall the following known tail estimates 
 \begin{eqnarray} \mathbb{P}( W_{\sigma} > x) &\sim& c_{1} \cdot x^{- (a-3/2)} \nonumber \\
 \mathbb{P}( \sigma > x) &\sim& c_{2} \cdot x^{- \frac{a-3/2}{a-1}}\nonumber \\
 \mathbb{P}( \max \mathrm{Exc} > x) &\leq& c_{3} \cdot x^{-(a-3/2)} \label{eq:estimatese}, \end{eqnarray}
 as $x \to \infty$ for some constants $c_{1},c_{2},c_{3}>0$. The first two estimates can be found in  \cite[Remark 1.2, Lemma 2.1]{CC08}  and the last one can be deduced from the second one: Indeed, for $x >0$ consider $\tau_{-x} = \inf \{i \geq 0 : W_{i} \leq -x\}$, then conditionally on the event $\tau_{-x}< \sigma$, the probability of the event \[\{|W_{k+\tau_{-x}} - W_{\tau_{-x}}| < x/2 : \forall 0 \leq k \leq x^{a-1}\}\] is bounded away from zero by some constant $c>0$ uniformly in $x >0$ (this follows from the Markov property and the convergence of $ x^{-1}W( \cdot\, x^{a-1})$ towards the $(a-1)$-stable Lévy process). In particular, on this event we have $\sigma > \tau_{-x} + x^{a-1}$ and therefore
 \[ \mathbb{P}( \sigma > x^{a-1}) \geq c \cdot \mathbb{P}(\min_{0 \leq i < \sigma} W_{i}\leq -x).\]
 But clearly we have $ \max  \mathrm{Exc}  \leq W_{\sigma}-\min_{0 \leq i < \sigma} W_{i}$ and so    \begin{eqnarray*}  \mathbb{P}(\max  \mathrm{Exc} > 2x) &\leq&  \mathbb{P}(\min_{0 \leq i < \sigma} W_{i}\leq -x) + \mathbb{P}( W_{\sigma}>x)\\
 & \leq & \frac{1}{c}  \mathbb{P}( \sigma > x^{a-1}) + \mathbb{P}( W_{\sigma}>x)\\
 & \underset{ \mathrm{asympt.}}{\leq} & \frac{1}{c} c_{2}  (x^{a-1})^{ \frac{-(a-3/2)}{a-1}} + c_{1} x^{-(a-3/2)},  \end{eqnarray*}
 and the desired third estimate of \eqref{eq:estimatese} follows. 
 
 We then use \eqref{eq:estimatese} in conjunction with the following classical result: if $S_{n} = X_{1}+ X_{2}+\cdots +X_{n}$ is a random walk whose increments are independent, non-negative and satisfy $ \mathbb{P}(X_{i}>x) \sim c\cdot x^{-1/\alpha}$ for $\alpha >1$ and $c>0$ (resp.\ $ \mathbb{P}(X_{i}>x) \leq c\cdot x^{-1/\alpha}$)  then we have   \begin{eqnarray} \label{eq:loglog} \frac{\log S_{n}}{\log n}  \xrightarrow[n\to\infty]{a.s.} \alpha  \qquad  ( \mbox{ resp.} \quad \ \limsup_{n \to \infty} \frac{\log S_{n}}{\log n} \leq \alpha).  \end{eqnarray} When applied to the above construction, this remark shows that after concatenating $n$ excursions, the total length is of order $n^{ \frac{a-1}{a-3/2}+o(1)}$, the current height  is of order $n^{1/(a-3/2)+o(1)}$ and the height of the largest excursion is no more than $n^{1/(a-3/2)+o(1)}$. Having a look at Fig.\,\ref{fig:tanaka} this implies that $W^{\uparrow}_{n} = n^{1/(a-1) + o(1)}$ as desired in the first point of the proposition.

Let us now turn our attention to the volume process. Recall that conditionally on the perimeter process $(P_{n})_{n\geq0}$ the volume process is obtained by summing the volume of Boltzmann maps each time the perimeter produces a negative jump. Let us bound the tail of $\Delta V_{n}$: for $x >0$ we have 
  \begin{eqnarray*} 
  \mathbb{P}(\Delta V_{n}>x) &=&  \sum_{\ell=1}^{\infty}\mathbb{P}( \Delta V_{n}> x \mbox{ and } \Delta P_{n} = -\ell  ) \\
   & = & \sum_{\ell=1}^{\infty} \mathbb{P}(|B^{(\ell-1)}| >x)\, \mathbb{P}(\Delta P_{n} = -\ell)\\
  & \underset{  \mathrm{Markov}}{\leq} &  \sum_{\ell=1}^{\infty} \left(x^{-1}\mathbb{E}[|B^{(\ell-1)}|] \wedge 1\right) \mathbb{P}(\Delta P_{n} = -\ell) \\
  & \underset{  \eqref{eq:bounddeltaperi} \mathrm{ \ and \ Prop.} \ref{prop:volume}}{\leq} & c \sum_{\ell=1}^{\infty}\left( \frac{\ell^{a- \frac{1}{2}}}{x} \wedge 1 \right) \ell^{-a} \\ 
  & \leq & c \, x^{- \frac{a-1}{a-1/2}},  \end{eqnarray*} for some constant $c>0$ that may vary from line to line. Using the uniform control over the tail of $\Delta V_{n}$ we can stochastically bound from above the volume process $(V_{n})_{n \geq 0}$ by a process $(\tilde{V}_{n})_{ n\geq 0}$ with independent positive increments with a tail of order $ \mathbb{P}( \Delta \tilde{V}_{n}>x) \sim c  x^{- \frac{a-1}{a-1/2}}$. And so by \eqref{eq:loglog} we deduce that \[ \limsup_{n \to \infty } \frac{\log {V}_{n}}{\log n }\leq \limsup_{n \to \infty }\frac{\log  \tilde{V}_{n} }{\log n } \underset{ \eqref{eq:loglog}}{\leq} \frac{a-1/2}{a-1}.\] For the lower bound we use the fact that $V_{n}$ dominates any of its jump until time $n$. Since the process $P_{n}$ makes negative jumps of order $n^{1/(a-1)}$ until time $n$, the process $(V)$ makes jumps of order $n^{(a-1/2)/(a-1)}$ until time $n$. We leave it to the reader to turn this heuristic into an almost sure lower bound.
\endproof 

\section{A special weight sequence}
\label{sec:particular}

In this paper we have considered general weight sequences $\qseq$ with asymptotic behaviour $q_k \sim c \kappa^{k-1}k^{-a}$.
Let us wrap up by revisiting some of the results for a very convenient particular weight sequence \cite{ABM16} for $a\in(3/2;5/2)$ given by
\begin{equation}\label{eq:specweights}
q_k = c \kappa^{k-1} \frac{\Gamma(\half-a+k)}{\Gamma(\half+k)}\mathbf{1}_{k\geq 2}, \quad\quad \kappa = \frac{1}{4a-2},\quad\quad c = \frac{-\sqrt{\pi}}{2\,\Gamma(3/2-a)}.
\end{equation}
Notice that this weight sequence is term-wise continuous as $a\to5/2$ taking the value $q_k = \frac{1}{12}\mathbf{1}_{k=2}$, which corresponds exactly to critical quadrangulations.

\begin{lemma}
For $a\in(3/2,5/2)$ the weight sequence (\ref{eq:specweights}) is admissible and critical and the law $\nu$ of the corresponding random walk $(W_i)_{i}$ is given by
\begin{equation}\label{eq:nuspec}
\nu(k) = c\frac{\Gamma(3/2-a+k)}{\Gamma(3/2+k)}\mathbf{1}_{k\neq 0}.\quad\quad (k\in\Z)
\end{equation}
\end{lemma}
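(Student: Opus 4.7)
My plan is in two stages linked by Proposition~\ref{prop:qnubijection}: first verify that the given $\qseq$ is admissible and critical, then identify the associated law $\nu$ explicitly.

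For admissibility and criticality, I would compute $\bar f_\qseq(x) := x(1-f_\qseq(x))$ from the proof of Proposition~\ref{prop:volume}. Legendre duplication gives $\binom{2k-1}{k} = 4^k \Gamma(k+\tfrac12)/(2\sqrt{\pi}\,k!)$, so $f_\qseq(x)$ is a generalized binomial sum and collapses to the clean closed form $\bar f_\qseq(x) = 1-(1-4\kappa x)^{a-1/2}$. The two critical conditions $\bar f_\qseq(1/(4\kappa)) = 1$ and $\bar f_\qseq'(1/(4\kappa)) = 0$ are then immediate, with the second using $a > 3/2$.

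Next I would identify $\nu$ via~\eqref{eq:defnu}. The non-negative part $\nu(k) = q_{k+1}\kappa^{-k}$ directly matches the stated formula after cancelling powers of $\kappa$, and the boundary value $\nu(-1) = 2\kappa W^{(0)} = 2\kappa = 1/(2a-1)$ agrees via $\Gamma(1/2-a) = -2\Gamma(3/2-a)/(2a-1)$. The substantive step is obtaining a closed form for the disk partition function $W^{(\ell)}$, which determines $\nu(-k) = 2\kappa^k W^{(k-1)}$ for $k \geq 2$. Feeding the explicit $\bar f_\qseq$ into the functional equation for $W^{(\ell)}$ derived from the peeling process (alternatively, using the Bouttier--Di~Francesco--Guitter contour representation of the disk amplitude) one obtains $W^{(\ell)} = (4a-2)^\ell (1/2)_\ell/(a+1/2)_\ell$, whose asymptotic reproduces~\eqref{eq:asymptowl}. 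A short manipulation using the Gamma reflection $\Gamma(3/2-a)\Gamma(a-1/2) = -\pi/\cos(\pi a)$ then converts $2\kappa^k W^{(k-1)}$ into $c\,\Gamma(3/2-a-k)/\Gamma(3/2-k)$, matching the claim.

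The main obstacle I expect is the closed-form evaluation of $W^{(\ell)}$: the peeling equation is quadratic in $W$, so one must exploit the very specific algebraic structure of $(1-4\kappa x)^{a-1/2}$. As a sanity check, $\sum_{k\neq 0}\nu(k) = 1$ reduces to two ${}_2F_1$ Gauss summations at $z = 1$, with the positive tail contributing $(2a-3)/(2(a-1))$ and the negative tail (after applying the reflection identity) contributing $1/(2(a-1))$.
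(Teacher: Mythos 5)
Your route is genuinely different from the paper's, and its first half is sound: the Legendre duplication formula does collapse $f_\qseq$ into a binomial series, giving $\bar f_\qseq(x)=1-(1-4\kappa x)^{a-1/2}$ exactly, and the two conditions $\bar f_\qseq(1/(4\kappa))=1$ and $\bar f_\qseq'(1/(4\kappa))=0$ (the latter indeed needing $a>3/2$) are precisely the admissibility and criticality conditions the paper itself uses in the proof of Proposition \ref{prop:volume}. The matching of the positive half of $\nu$ with $q_{k+1}\kappa^{-k}$ and the value $\nu(-1)=2\kappa W^{(0)}=2\kappa$ are also correct.

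The gap is the closed form $W^{(\ell)} = (4a-2)^{\ell}\,\Gamma(\ell+\tfrac12)\Gamma(a+\tfrac12)/\bigl(\Gamma(\tfrac12)\Gamma(a+\tfrac12+\ell)\bigr)$, which via \eqref{eq:defnu} carries essentially all of the content of the lemma for $k\leq -2$, and which you assert rather than derive. ``Feeding $\bar f_\qseq$ into the quadratic peeling equation'' is not a proof: solving the loop equation simultaneously for all $\ell$ is exactly the hard step, as you yourself flag. Two ways to close it. First, within your framework you can bypass the quadratic equation entirely: since $W^{(\ell)}_\bullet(g;\qseq)=g\,\partial_g W^{(\ell)}(g;\qseq)$ and $W^{(\ell)}(0;\qseq)=0$ for $\ell\geq 1$, you have $W^{(\ell)}=\int_0^1 W^{(\ell)}_\bullet(g;\qseq)\,g^{-1}\,\rmd g$; inserting the universal identity $W^{(\ell)}_\bullet(g;\qseq)=g^{1+\ell}\kappa_g^{-\ell}h^\downarrow(\ell)$ from the proof of Proposition \ref{prop:volume} together with your exact $\bar f_\qseq$ (which gives $4\kappa x=1-(1-g)^{1/(a-1/2)}$, not just an asymptotic) reduces $W^{(\ell)}$ to a Beta integral and yields your formula. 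Second, note that the paper avoids $W^{(\ell)}$ altogether by running Proposition \ref{prop:qnubijection} in the opposite direction: it takes the stated $\nu$ as an ansatz, verifies through its characteristic function that it is a probability law and that $h^\uparrow$ is $\nu$-harmonic on $\Z_{>0}$ (a single contour-integral identity), and thereby obtains admissibility, criticality and the identification of $\nu$ --- hence the closed form for $W^{(\ell)}$ as a byproduct --- with no generating function for the disk amplitude at all. As written, your proposal is incomplete at this one step; everything else checks out.
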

\begin{proof}
Clearly the values $\nu(k)$, $k\in\Z$, are nonnegative and one may check that the characteristic function $\phi(\theta) := \sum_{k=-\infty}^\infty \nu(k) e^{ik\theta}$ of (\ref{eq:nuspec}) is given by
\begin{equation*}
\phi(\theta) = 1 - \frac{\pi}{2}\frac{\Gamma(a-1/2)}{\Gamma(a)} (1-e^{i\theta})^{a-3/2}\sqrt{1-e^{-i\theta}}. 
\end{equation*}
Since $\phi(0)=1$, $\nu$ defines a probability measure on $\Z$.
Using that $\kappa = \nu(-1)/2$, it follows from Proposition \ref{prop:qnubijection} that the only thing we need to check is that $h^\uparrow$ is $\nu$-harmonic on $\Z_{>0}$, i.e., that
\begin{equation}\label{eq:nuharmonic}
\sum_{k=-\infty}^\infty h^\uparrow(\ell+k)\nu(k) = h^\uparrow(\ell) \quad\text{for }\ell>0.
\end{equation}
Using that $\sum_{\ell=1}^\infty h^\uparrow(\ell)e^{-i\ell\theta} = e^{-i\theta}(1-e^{-i\theta})^{-3/2}$ we find for $\ell>0$ that
\begin{align*}
\sum_{k=-\infty}^\infty h^\uparrow(\ell+k)(\nu(k)-\mathbf{1}_{k=0}) &= \frac{1}{2\pi} \int_0^{2\pi} \frac{e^{(\ell-1)i\theta}}{(1-e^{-i\theta})^{3/2}} (\phi(\theta)-1)\rmd\theta\\
&= -\frac{\Gamma(a-1/2)}{4\Gamma(a)} \int_0^{2\pi} e^{i\ell\theta}(1-e^{i\theta})^{a-5/2}\rmd\theta = 0,
\end{align*}
which implies (\ref{eq:nuharmonic}).
\end{proof}

The scaling constants in Theorem \ref{thm:scalingperimeter+volume} take on the values
\begin{align*}
\mathsf{p}_\qseq = c^{\frac{1}{a-1}},\quad\quad \mathsf{b}_\qseq = \frac{1}{\Gamma(a+1/2)},\quad\quad \mathsf{v}_\qseq = \frac{1}{\Gamma(a+1/2)}c^{\frac{a-1/2}{a-1}}.
\end{align*}
On the other hand, for $a \in (2,5/2)$,
\begin{align*}
\mathsf{a}_\qseq &:= \frac{1}{2}\left(1+ \sum_{k=0}^\infty (2k+1)\nu(k)\right) = 1 + \frac{1}{4(a-2)},\quad\quad h_\qseq = \mathsf{a}_\qseq/(2p_\qseq)
\end{align*}
and for $a\in(3/2,2)$,
\begin{align*}
\mathbb{E}\left[\mathrm{d_{fpp}}( \rootface, \infty)\right] = \sum_{k=1}^\infty \prob_1(W_k=0) = \frac{1}{2\pi}\int_0^{2\pi} \frac{e^{i\theta}\rmd \theta}{1-\phi(\theta)} = \frac{\cot(\pi a)}{\pi} \frac{a-1}{(a-\frac{5}{2})(a-\frac{3}{2})}.
\end{align*}

\end{document}